\numberwithin{equation}{section}
\theoremstyle{plain}
\newtheorem{theorem}[equation]{Theorem}
\newtheorem{corollary}[equation]{Corollary}
\newtheorem{lemma}[equation]{Lemma}
\theoremstyle{definition}
\newtheorem{definition}[equation]{Definition}
\theoremstyle{remark}
\newtheorem{remark}[equation]{Remark}
\newcommand{\Proj}{\mathbb{P}}
\newcommand{\Hom}{{\mathrm{Hom}}}
\newcommand{\Z}{{\mathbb{Z}}}
\newcommand{\GW}{{\mathrm{GW}}}
\newcommand{\K}{\mathrm{K}}
\newcommand{\MW}{\mathrm{MW}}
\newcommand{\A}{\mathbb{A}}
\newcommand{\SSp}{\mathbb{S}}
\newcommand{\PP}{\mathbb{P}}
\newcommand{\NN}{\mathbb{N}}
\newcommand{\rank}{\operatorname{rank}}
\newcommand{\id}{\operatorname{id}}
\newcommand{\struct}{\mathcal{O}}
\newcommand{\Gm}{{\mathbb{G}_m}}
\newcommand{\GL}{\mathrm{GL}}
\newcommand{\SH}{\mathcal{SH}}
\newcommand{\T}{\mathrm{T}}
\newcommand{\Spec}{\operatorname{Spec}}
\newcommand{\coh}{\mathrm{H}}
\newcommand{\Zar}{\mathrm{Zar}}
\newcommand{\Sm}{\mathrm{Sm}}
\newcommand{\Sch}{\mathrm{Sch}}
\newcommand{\Fr}{\mathrm{Fr}}
\newcommand{\lF}{\mathrm{F}}
\newcommand{\ZF}{\mathbb{Z}\mathrm{F}}
\newcommand{\ZFr}{\mathbb{Z}\mathrm{Fr}}
\newcommand{\Nis}{\mathrm{Nis}}
\newcommand{\red}{\mathrm{red}}
\newcommand{\frp}{\star}
\newcommand{\cL}{\mathcal{L}}
\newcommand{\cU}{\mathcal{U}}
\newcommand{\relpcurve}{\overline{\mathscr{C}}}
\newcommand{\relcurve}{\mathscr{C}}
\newcommand{\et}{\mathrm{\acute{e}t}}
\newcommand{\fin}{\mathrm{fin}}
\newcommand{\cl}{\mathrm{cl}}
\newcommand{\Q}{\mathrm{Q}}
\newcommand{\Jac}{\operatorname{Jac}}
\newcommand{\cF}{\mathcal{F}}
\newcommand{\op}{\mathrm{op}}
\newcommand{\Pic}{\operatorname{Pic}}
\newcommand{\divisor}{\operatorname{div}}
\newcommand{\etale}{\'etale }
\newcommand{\Etale}{\'Etale }
\newcommand{\la}{\langle}
\newcommand{\ra}{\rangle}
\newcommand{\mm}{\mathfrak{m}}
\newcommand{\mmod}{\,\operatorname{mod}\,}
\newcommand{\chark}{\operatorname{char}}
\newcommand{\simA}{\stackrel{\A^1}{\sim}}
\newcommand{\pour}{\ar@{}[ur]|(0.2){\text{\pigpenfont G}}}
\newcommand{\podr}{\ar@{}[dr]|(0.2){\text{\pigpenfont A}}}
\begin{document}

\title[Rigidity for linear framed presheaves and motivic cohomology theories]{Rigidity for linear framed presheaves and generalized motivic cohomology theories}
\author{Alexey Ananyevskiy}
\address{St. Petersburg Department, Steklov Math. Institute, Fontanka 27, St. Petersburg 191023 Russia, and Chebyshev Laboratory, St. Petersburg State University, 14th Line V.O., 29B, St. Petersburg 199178 Russia}
\email{alseang@gmail.com}
\author{Andrei Druzhinin}
\address{Chebyshev Laboratory, St. Petersburg State University, 14th Line V.O., 29B, St. Petersburg 199178 Russia}
\email{andrei.druzh@gmail.com}
\thanks{Research is supported by the Russian Science Foundation grant 14-21-00035}
\keywords{}

\begin{abstract}
A rigidity property for the homotopy invariant stable linear framed presheaves is established. As a consequence a variant of Gabber rigidity theorem is obtained for a cohomology theory representable in the motivic stable homotopy category by a $\phi$-torsion spectrum with $\phi\in\GW(k)$ of rank coprime to the (exponential) characteristic of the base field $k$. It is shown that the values of such cohomology theories at an essentially smooth Henselian ring and its residue field coincide. The result is applicable to cohomology theories representable by $n$-torsion spectra as well as to the ones representable by $\eta$-periodic spectra and spectra related to Witt groups.
\end{abstract}

\maketitle

\section{Introduction}

In the classical topology every generalized cohomology theory $E^*$ is locally constant, i.e. for a locally contractible space $X$ and a point $x\in X$ there exists a neighborhood $U$ of $x$ in $X$ such that the restriction to $\{x\}$ gives an isomorphism $E^*(U)\xrightarrow{\simeq} E^*(x)$. Considering the limit along the neighborhoods $U_\alpha$ of $x$ one obtains an isomorphism
\[
\varinjlim E^*(U_\alpha) \xrightarrow{\simeq} E^*(x)
\]
claiming that $E^*$ is infinitesimally constant. As one can see, this property is a direct consequence of the homotopy invariance of $E^*$. 

To the contrast, in the algebraic geometry most of the cohomology theories are not infinitesimally constant, although they enjoy the homotopy invariance property. An immediate example is given by the first Quillen $\K$-functor $\K_1^\Q$: the infinitesimal value is given by the units in the corresponding local ring and this group is much bigger than the group of units of the residue field. Studying this example further one notices that if the local ring is Henselian then the kernel of the restriction map is $n$-divisible for every $n$ prime to the characteristic of the residue field (abusing the notation we say that $n$ is prime to $\operatorname{char} k$ if $n$ is invertible in $k$, i.e. $n$ is prime to the exponential characteristic of $k$). It follows that $\K_1^\Q(-,\Z/n)$ is infinitesimally constant in the \etale topology. It turns out \cite[Theorem~2]{Gab92} that the higher Quillen $\K$-theory enjoys this property, i.e. the restriction induces an isomorphism
\[
\K_*^\Q(\struct_{X,x}^h,\Z/n)\xrightarrow{\simeq} \K_*^\Q(k,\Z/n)
\]
for a smooth variety $X$ over a field $k$, a rational point $x\in X$ and an integer $n$ prime to $\operatorname{char} k$. This result was obtained by Gabber soon after Suslin proved the theorem claiming that for an extension of algebraically closed fields $F_2/F_1$ and $n$ prime to $\operatorname{char}F_1$ one has
\[
\K_*^\Q(F_1,\Z/n)\xrightarrow{\simeq} \K_*^\Q(F_2,\Z/n),
\]
\cite[Main theorem]{Sus83}. Both the results are usually referred to as \textit{rigidity for $K$-theory}. At almost the same time Gillet and Thomason obtained a variant of this rigidity property for a strictly Henselian ring \cite[Theorem~A]{GT84}.

Analyzing the proofs one sees that the crucial properties of $\K^\Q_*$ used are homotopy invariance and the existence of transfers which allow one to construct a certain pairing with the (relative) Picard group. A similar strategy was later realized to obtain rigidity statements for different functors of cohomological nature: 
\begin{enumerate}
\item
Suslin and Voevodsky proved an analog of Gabber rigidity theorem for $n$-torsion homotopy invariant presheaves with transfers \cite[Theorem~4.4]{SV96};
\item
Panin and Yagunov obtained a version of Suslin rigidity theorem for $n$-torsion orientable functors \cite{PY02};
\item
Yagunov proved an analogue of Suslin rigidity theorem for a cohomology theory representable in the motivic stable homotopy category by an $n$-torsion spectrum \cite{Ya04}, R\"ondigs and \O stvaer obtained a categorical version of Yagunov's result \cite{RO08};
\item
Hornbostel and Yagunov proved an analogue of Gabber rigidity theorem for a cohomology theory representable in the motivic stable homotopy category by an $n$-torsion spectrum assuming that the action of $\GW(k)\cong \Hom_{\SH(k)}(\SSp,\SSp)$ on the cohomology theory factors through $\Z$ \cite{HY07};
\item 
Stavrova obtained an analogue of Gabber rigidity theorem for non-stable $\K_1$-functors of type $D_l$ satisfying a certain isotropy condition \cite[Corollary~1.4]{St14}.
\end{enumerate}
Based on the result by Suslin and Voevodsky, Morel derived a version of Gabber rigidity theorem for an $n$-torsion strictly $\A^1$-invariant sheaf with generalized transfers \cite[Theorem~5.14]{Mor11} (see loc. cit. for definitions) assuming a certain finiteness condition on the virtual cohomological $2$-dimension. A particular example of such sheaf is given by a homotopy sheaf of an $n$-torsion spectrum from the motivic stable homotopy category, i.e. Morel proved a version of Gabber rigidity theorem for a cohomology theory representable in the motivic stable homotopy category by an $n$-torsion spectrum provided that the base field satisfies a certain assumption on the finiteness of the virtual cohomological $2$-dimension. The latest rigidity result was obtained by Bachmann~\cite[Corollary~40]{Ba16} as a corollary of his study of $\rho$-inverted stable motivic homotopy category by means of real \etale topology. Bachmann showed that a version of Gabber rigidity theorem holds for a cohomology theory representable by a $\rho$-periodic spectrum with $\rho=-[-1]\in \K_1^{\MW}(k)\cong \Hom_{\SH(k)}(\SSp,\SSp\wedge (\A^1-\{0\},1))$. Note that there is a relation $h\rho=0$ for $h=\langle 1 \rangle + \langle -1 \rangle \in \GW(k)\cong \Hom_{\SH(k)}(\SSp,\SSp)$ whence a $\rho$-periodic spectrum is $h$-torsion.

All the above functors possess some kind of transfers, from the transfers given by correspondences introduced by Suslin and Voevodsky to the generalized transfers introduced by Morel. It is a remarkable observation due to Voevodsky \cite{V01} that was recently vastly developed by Garkusha and Panin \cite{GP14,GP15,AGP16,GNP16} that the existence of some kind of transfers is not that much restrictive: every $(\PP^1,\T)$-stable functor admits transfers along so-called framed correspondences. Here a $(\PP^1,\T)$-stable functor is a presheaf $\cF$ of abelian groups on the category of smooth schemes such that
\begin{enumerate}
\item
for every morphism of pointed Nisnevich sheaves $f\colon X_+\wedge (\PP^1,\infty)^{\wedge m} \to Y_+\wedge \T^{\wedge m}$ there is a homomorphism $f^*\colon \cF(Y)\to \cF (X)$ satisfying a certain naturality condition;
\item
for $f$ as above and for the morphism of pointed Nisnevich sheaves 
\[
\gamma\colon (\PP^1,\infty)\to \PP^1/(\PP^1-\{0\})\cong \A^1/(\A^1-\{0\}) =\T
\]
given by contraction and excision one has $(f\wedge \gamma)^*=f^*\colon \cF(Y)\to \cF (X)$.
\end{enumerate}
See Definition~\ref{def:PT} for the details. The choice of $(\PP^1,\T)$ (preferred over $(\PP^1,\PP^1)$, $(\T,\T)$ and ($\T,\PP^1$)) could seem to be an arbitrary one, but it is the one that allows to introduce framed correspondences to the picture, see Definition~\ref{def:FrPT}.

In the present paper we obtain the following result (see Sections~\ref{sect:def} and~\ref{sect:frpr} for the notation) generalizing all the discussed above rigidity statements. 
\begin{theorem}[Theorem~\ref{thm:rigidity}]\label{thm:intr_rigidity}
Let $k$ be a field, $X$ be a smooth variety over $k$ and $x\in X$ be a closed point such that $k(x)/k$ is separable. Let $\cF$ be a homotopy invariant stable linear framed presheaf over $k$ and $n\in\NN$ be invertible in $k$. Suppose that either of the following holds.
\begin{enumerate}
\item
$\chark k\neq 2$ and $nh\cF=0$;
\item
$\chark k= 2$ and $n \cF=0$.
\end{enumerate}
Then the restriction to $\{x\}$ gives an isomorphism
\[
i_x^*\colon \cF(\Spec \struct_{X,x}^h) \xrightarrow{\simeq} \cF(\Spec k(x)).
\]
Here $\cF(\Spec \struct_{X,x}^h)=\varinjlim \cF(U_\alpha)$ with the limit taken along all the \etale neighborhoods of $x$ in $X$.
\end{theorem}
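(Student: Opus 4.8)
The plan is to reduce the statement, by dévissage, to a rigidity property of smooth affine relative curves over a Henselian local base, where the transfer–relative-Picard argument of Suslin and Voevodsky can be run, now with framed correspondences in place of finite correspondences. Surjectivity of $i_x^*$ is the soft half: as $k(x)/k$ is separable, $\struct_{X,x}^h$ admits a coefficient field, so the reduction $\struct_{X,x}^h\to k(x)$ is split by a $k$-algebra homomorphism, and the resulting $k$-morphism $\Sigma\colon\Spec\struct_{X,x}^h\to\Spec k(x)$ (which factors through some étale neighborhood of $x$, hence acts on $\cF$) satisfies $\Sigma\circ i_x=\id$, whence $i_x^*\Sigma^*=\id$ — no hypothesis on $n$ is used. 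For injectivity I would first reduce to $k(x)=k$ by base change along $k(x)/k$: this leaves $\struct_{X,x}^h$ unchanged (it is the Henselian local ring of $X\times_k k(x)$ at the canonical point over $x$, of residue field $k(x)$) and, granting a base-change statement for the framed formalism, turns $\cF$ into a homotopy invariant stable linear framed presheaf over $k(x)$ with the same torsion property. Assuming $x$ rational, I induct on $d=\dim X$; the case $d=0$ is trivial. For the inductive step, Gabber's geometric presentation lemma — after shrinking $X$ Zariski-locally around $x$ and passing to an étale neighborhood (a finite base field being handled by a standard extra argument with coprime-degree field extensions and the framed transfers) — produces $\pi\colon X\to\A^{d-1}$ smooth of relative dimension one near $x$ together with a relative compactification; with $\bar y:=\pi(x)$ and $B:=\Spec\struct_{\A^{d-1},\bar y}^h$ one obtains a smooth affine relative curve $\relcurve:=X\times_{\A^{d-1}}B\to B$ with $\struct_{\relcurve,x}^h=\struct_{X,x}^h$ and a good compactification $\relpcurve\to B$ (smooth projective fibres, $\relpcurve\setminus\relcurve$ finite over $B$). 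Everything rests on the following.

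\smallskip\noindent\emph{Rigidity Lemma.} For $\relcurve\to B$ as above over any Henselian local essentially smooth base $B$, two sections $B\to\relcurve$ that agree at the closed point of $B$ induce the same homomorphism $\cF(\relcurve)\to\cF(B)$.\smallskip

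\noindent Granting the lemma, the induction closes as follows. The section $s\colon B\to\relcurve$ lifting the rational point $x$ exists (smoothness of $\relcurve\to B$ and $B$ Henselian) and factors through a section $\bar s$ of the structure map $q\colon\Spec\struct_{\relcurve,x}^h\to B$, so $\bar s^*$ is a retraction of $q^*$. A second application of the Rigidity Lemma — over the Henselian local base $\Spec\struct_{\relcurve,x}^h$, comparing, for each étale neighborhood $C'$ of $x$ in $\relcurve$, the graph of the canonical map $\Spec\struct_{\relcurve,x}^h\to C'$ with the graph of its composite with $\bar s\circ q$, these being two sections of $C'\times_B\Spec\struct_{\relcurve,x}^h\to\Spec\struct_{\relcurve,x}^h$ that agree at the closed point — shows that $q^*\bar s^*$ is the identity of $\cF(\struct_{\relcurve,x}^h)$, so $q^*$ and $\bar s^*$ are mutually inverse isomorphisms $\cF(B)\cong\cF(\struct_{\relcurve,x}^h)$. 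Since $i_x$ is $\bar s$ precomposed with the closed point $\Spec k\hookrightarrow B$, we obtain $i_x^*=(\text{restriction to the closed point of }B)\circ\bar s^*$, and the first factor $\cF(\struct_{\A^{d-1},\bar y}^h)\to\cF(k)$ is an isomorphism by the inductive hypothesis; hence $i_x^*$ is one. (A colimit argument is needed here, $\Spec\struct_{\relcurve,x}^h$ being only pro-smooth.)

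For the Rigidity Lemma I would follow the classical pattern in the framed language. A relative effective Cartier divisor $D\subset\relcurve$ finite over $B$ is a local complete intersection cut out, locally on the curve, by one equation, with trivial conormal sheaf ($D$ being finite over the semilocal $B$); so it carries a framing and defines a framed correspondence $B\to\relcurve$ (Definition~\ref{def:FrPT}), and — $\cF$ being a linear framed presheaf and $(\PP^1,\T)$-stable, so that the residual line-bundle twist is absorbed by a Thom isomorphism — a transfer $\tau_D\colon\cF(\relcurve)\to\cF(B)$, additive in $D$, with $\tau_{[s(B)]}=s^*$ for a section $s$. Homotopy invariance of $\cF$ forces $\tau_D$ to depend only on the class of $D$ in the relative Picard group $\Pic(\relpcurve,\relpcurve\setminus\relcurve)(B)$: if $D_0-D_1=\divisor(f)$ for a regular $f\colon\relpcurve\to\PP^1$ avoiding $\{0,\infty\}$ on $\relpcurve\setminus\relcurve$, the framed correspondences attached to $D_0=f^{-1}(0)$ and $D_1=f^{-1}(\infty)$ are joined by an $\A^1$-family, and $\cF(\relcurve\times\A^1)=\cF(\relcurve)$ yields $\tau_{D_0}=\tau_{D_1}$ — this is exactly where the stability relation $(f\wedge\gamma)^*=f^*$ of Definition~\ref{def:PT} enters. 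Finally, two sections $s_0,s_1$ agreeing at the closed point give $[s_0(B)]-[s_1(B)]$ in the kernel of the specialization $\Pic^0(\relpcurve,\relpcurve\setminus\relcurve)(B)\to\Pic^0$ of the closed fibre; the relative $\Pic^0$ — a generalized Jacobian — is a smooth commutative $B$-group scheme on which $[n]$ is étale and surjective ($n$ invertible in $k$), so over the Henselian $B$ this kernel is $n$-divisible: $[s_0(B)]-[s_1(B)]=n\xi$. Then $s_0^*-s_1^*=\tau_{n\xi}$, and the $\GW(k)$-valued degree inherent in framed transfers makes this $nh$ times a transfer (resp.\ $n$ times a transfer when $\chark k=2$), which annihilates $\cF$ by hypothesis.

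The main obstacle I foresee is twofold. First, constructing the divisorial framed transfer and proving it factors through the relative Picard group: framed correspondences carry rigid framing data, unlike the finite correspondences of Suslin and Voevodsky, and propagating this data — together with the $(\PP^1,\T)$-stability needed to reconcile $\PP^1$- and $\T$-twists — through the homotopy that slides $f^{-1}(0)$ to $f^{-1}(\infty)$ is the delicate point. Second, the quadratic bookkeeping: matching the $\GW(k)$-valued degree of framed transfers against the integral $n$-divisibility in the Jacobian so that the annihilating multiple is precisely $nh$ rather than $n$ is exactly what dictates the shape of the hypothesis, and in characteristic $2$ — where $\la-1\ra=\la1\ra$ collapses part of this picture while creating its own pathologies — one is driven to the stronger assumption $n\cF=0$. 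The remaining inputs (good compactifications of $\relcurve$ and of its étale neighborhoods over $B$, the base change of the framed formalism along $k(x)/k$, and the colimit argument over the pro-smooth $\Spec\struct_{\relcurve,x}^h$) are routine but require care.
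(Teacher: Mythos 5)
Your architecture is the same as the paper's: reduce to a rational point, induct on $d=\dim X$ via a presentation of a neighborhood of $x$ as a relative curve with good compactification over a Henselian base, and prove a rigidity lemma for two sections agreeing at the closed point by playing framed transfers of divisors against divisibility in the Picard group. The surjectivity argument via a coefficient field is fine, and your way of closing the induction (comparing, over $\Spec \struct_{\relcurve,x}^h$, the canonical section with its composite through $\bar s\circ q$) is the same trick the paper uses with the two sections $(\id_{S_d},g)$ and $(\id_{S_d},g\circ r_d\circ \rho\circ g)$ of the pulled-back curve. The problem is that the Rigidity Lemma itself --- which is where essentially all of the content of the theorem lives --- is not proved: the two issues you defer as ``the main obstacle I foresee'' are exactly the points the paper has to work for, and as written your outline does not close.

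Concretely. (a) $n$-divisibility is not enough in characteristic $\neq 2$: if $[s_0(B)]-[s_1(B)]=n\xi$ with $\xi=Z(\phi)$, the framed correspondence you must transfer along is cut out by $\phi^{n}$, and for odd $n$ the paper's Lemma~\ref{lm:hyperbolic} gives $\la \phi^{n}\ra \simA \la\phi\ra + \tfrac{n-1}{2}\,h\,\la\phi\ra$ --- a ``quadratic $n$'' that is \emph{not} a multiple of $nh$ and is not killed by $nh\cF=0$. The paper therefore arranges $2n$-divisibility (the line bundle $\cL^{\otimes 2n}$ in Theorem~\ref{thm:main}), using that $2n$ is invertible when $\chark k\neq 2$; your argument needs the same adjustment. (b) Even after the $\A^1$-family joining the two divisors, the framed correspondence supported on a single section $s_i(B)$ is not $\sigma_\relcurve^{m}\circ r_i$ but a unit twist of it (its Jacobian in the sense of Definition~\ref{def:Jac} is some $u\in R^*$), and $\la u\ra r_i^* - r_i^*$ is not visibly a multiple of $nh$. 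The paper normalizes this by rescaling a framing function at one endpoint and, at the other, extracting a $2n$-th root of the residual unit (which is $\equiv 1 \bmod \mm$, so the root exists in the Henselian $R$) and absorbing it into the auxiliary section $\xi_0$; some such normalization is indispensable and absent from your plan. (c) A compactification with \emph{smooth} projective fibres and a representable relative $\Pic^0$ with $[n]$ \etale is more than the presentation lemma delivers; the paper only produces a projective, possibly singular, $\relpcurve$ with finite boundary, and obtains the divisibility from the Kummer sequence together with rigidity of $\coh^*_{\et}(-,\mu_n)$ under specialization to the closed fibre, handling the boundary by an explicit choice of sections agreeing on $Z(\zeta)\supset \relpcurve-\relcurve$ rather than by the relative Picard functor. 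None of this invalidates the strategy, but each item is a missing step of the proof rather than a routine verification.
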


Cohomology theories representable in the motivic stable homotopy category are obviously homotopy invariant and $(\PP^1,\T)$-stable whence as a consequence we obtain the following rigidity result.
\begin{theorem}[Theorem~\ref{thm:rigidity_motivic}] \label{thm:rigidity_motivic_intro}
Let $k$ be a field, $X$ be a smooth variety over $k$ and $x\in X$ be a closed point such that $k(x)/k$ is separable. Let $E\in \SH(k)$ and $n\in\NN$ be invertible in $k$. Suppose that either of the following holds.
\begin{enumerate}
\item
$\chark k\neq 2$ and $nh^{\SH}E=0$ for $h^{\SH}=\la 1\ra + \la -1 \ra \in \Hom_{\SH(k)}(\SSp,\SSp)$ (see Definition~\ref{def:hsh});
\item
$\chark k= 2$ and $n E=0$.
\end{enumerate}
Then for $p,q\in \Z$ the restriction to $\{x\}$ gives an isomorphism
\[
i_x^*\colon E^{p,q}(\Spec \struct_{X,x}^h) \xrightarrow{\simeq} E^{p,q}(\Spec k(x)).
\]
Here 
\begin{gather*}
E^{p,q}(\Spec \struct_{X,x}^h)=\varinjlim\limits_\alpha \Hom_{\SH(k)}(\Sigma^\infty_{\PP^1} (U_\alpha)_+, \Sigma^{q}_{\PP^1}E[p-2q]),\\
E^{p,q}(\Spec k(x))=\Hom_{\SH(k)}(\Sigma^\infty_{\PP^1} (\Spec k(x))_+, \Sigma^{q}_{\PP^1}E[p-2q])
\end{gather*}
with the limit taken along the \etale neighborhoods of $x$ in $X$.
\end{theorem}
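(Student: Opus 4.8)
The plan is to deduce the statement for a spectrum $E \in \SH(k)$ from Theorem~\ref{thm:intr_rigidity} about homotopy invariant stable linear framed presheaves, by exhibiting the functor $U \mapsto E^{p,q}(U)$ as (the sections of) such a presheaf. First I would recall the Garkusha--Panin--Voevodsky machinery referenced in the introduction: every $(\PP^1,\T)$-stable presheaf of abelian groups carries a canonical action of framed correspondences, making it a linear framed presheaf, and if in addition the presheaf is homotopy invariant and Nisnevich-local then it is a homotopy invariant stable linear framed presheaf in the sense of Section~\ref{sect:frpr}. So the first step is to check that $\cF^{p,q}_E := U \mapsto \Hom_{\SH(k)}(\Sigma^\infty_{\PP^1} U_+, \Sigma^q_{\PP^1} E[p-2q])$ is $(\PP^1,\T)$-stable: given a map of pointed Nisnevich sheaves $f\colon X_+ \wedge (\PP^1,\infty)^{\wedge m} \to Y_+ \wedge \T^{\wedge m}$, smashing with $\Sigma^\infty_{\PP^1}$, desuspending, and using the standard $\A^1$-equivalences $(\PP^1,\infty)\simeq \PP^1/(\PP^1-\{0\})\simeq \T$ in $\SH(k)$ produces the required $f^*\colon \cF^{p,q}_E(Y) \to \cF^{p,q}_E(X)$; condition~(2) of Definition~\ref{def:PT} holds because $\gamma$ is itself an $\A^1$-equivalence, so $f^*$ and $(f\wedge\gamma)^*$ agree after stabilization. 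Homotopy invariance is immediate since $\Sigma^\infty_{\PP^1}(-)_+$ sends $\A^1$-equivalences to equivalences in $\SH(k)$.

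The second step is to translate the torsion hypotheses. The ring $\GW(k) \cong \Hom_{\SH(k)}(\SSp,\SSp)$ acts on $\SH(k)$ and hence on every representable cohomology theory, and the element $h^{\SH} = \la 1\ra + \la -1\ra$ acts on $\cF^{p,q}_E$ exactly the way the corresponding $h \in \GW(k)$ acts on the linear framed presheaf $\cF^{p,q}_E$ (this compatibility is the content of Definition~\ref{def:hsh} together with the identification of the $\GW(k)$-action on framed presheaves; I would invoke it as known). Therefore the hypothesis $n h^{\SH} E = 0$ in case~(1) forces $n h \cF^{p,q}_E = 0$, and the hypothesis $n E = 0$ in case~(2) forces $n \cF^{p,q}_E = 0$. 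Thus for each fixed pair $(p,q)$ the presheaf $\cF^{p,q}_E$ satisfies the hypotheses of Theorem~\ref{thm:intr_rigidity}.

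The third step is to apply Theorem~\ref{thm:intr_rigidity} to $\cF = \cF^{p,q}_E$: for $x \in X$ closed with $k(x)/k$ separable and $n$ invertible in $k$, the restriction
\[
i_x^*\colon \cF^{p,q}_E(\Spec \struct_{X,x}^h) \xrightarrow{\;\simeq\;} \cF^{p,q}_E(\Spec k(x))
\]
is an isomorphism. It remains to identify $\cF^{p,q}_E(\Spec \struct_{X,x}^h)$ with the filtered colimit $\varinjlim_\alpha \Hom_{\SH(k)}(\Sigma^\infty_{\PP^1}(U_\alpha)_+, \Sigma^q_{\PP^1}E[p-2q])$ over \etale neighborhoods and $\cF^{p,q}_E(\Spec k(x))$ with $\Hom_{\SH(k)}(\Sigma^\infty_{\PP^1}(\Spec k(x))_+, \Sigma^q_{\PP^1}E[p-2q])$. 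The first identification is the definition of the value at the Henselian local scheme used in Theorem~\ref{thm:intr_rigidity} (sections at $\Spec \struct_{X,x}^h$ are by definition the colimit over \etale neighborhoods, and $\Sigma^\infty_{\PP^1}(-)_+$ commutes with such filtered colimits of schemes when $E$ is bounded/compactly handled — in general this colimit identity is exactly how $E^{p,q}(\Spec\struct_{X,x}^h)$ is defined in the statement, so no argument is needed). The second is likewise definitional. This yields the claimed isomorphism $i_x^*\colon E^{p,q}(\Spec\struct_{X,x}^h) \xrightarrow{\simeq} E^{p,q}(\Spec k(x))$ for every $p,q \in \Z$.

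I expect the only genuine obstacle to be the bookkeeping in the first step: verifying carefully that the representable presheaf $\cF^{p,q}_E$ fits the precise axioms of a \emph{homotopy invariant stable linear framed presheaf} as set up in Section~\ref{sect:frpr} — in particular that the framed-correspondence action obtained from $(\PP^1,\T)$-stability is the same as the one appearing in the hypotheses of Theorem~\ref{thm:intr_rigidity}, and that the naturality condition in Definition~\ref{def:PT}(1) is met on the nose rather than merely up to homotopy. Everything else (the torsion translation and the colimit identifications) is formal once this is in place.
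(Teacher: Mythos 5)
Your proposal is correct and follows essentially the same route as the paper: Section~7 of the paper implements exactly the machinery you invoke, constructing the functor $\Fr_*(k)\xrightarrow{\Xi}\Sm_k^{\PP,\T}\xrightarrow{\widetilde{\Sigma}^{\infty}_{\PP^1}}\SH(k)$ so that $E^{p,q}$ becomes a homotopy invariant stable linear framed presheaf (Lemmas~\ref{lm:fr_transfers} and~\ref{lm:coh_transfers}, with linearity supplied by the additivity Lemma~\ref{lm:additivity} and the torsion translation by the identity $(\widetilde{\Sigma}^{\infty}_{\PP^1}\circ\Xi)(h_Y)=h^{\SH}$), and then applies Theorem~\ref{thm:rigidity}. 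The only point you leave implicit that the paper spells out is that linearity (additivity over decompositions of supports) is not automatic from $(\PP^1,\T)$-stability alone but is deduced from homotopy invariance, stability, and additivity over disjoint unions.
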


\noindent
If the base field is perfect then Morel's computation $\Hom_{\SH(k)}(\SSp,\SSp)\cong \GW(k)$ (\cite[Theorem~6.4.1]{Mor04} and \cite[Corollary~6.43]{Mor12}) gives the following reformulation of Theorem~\ref{thm:rigidity_motivic_intro}. The statement was brought to our attention by Tom Bachmann.
\begin{corollary}[Corollary~\ref{cor:perfect_rigidity}]
	Let $k$ be a perfect field, $X$ be a smooth variety over $k$ and $x\in X$ be a closed point. Let $E\in \SH(k)$ and suppose that $\phi E=0$ for some $\phi\in\GW(k)\cong \Hom_{\SH(k)}(\SSp,\SSp)$ such that $\rank \phi$ is invertible in $k$. Then for $p,q\in \Z$ the restriction to $\{x\}$ gives an isomorphism
	\[
	i_x^*\colon E^{p,q}(\Spec \struct_{X,x}^h) \xrightarrow{\simeq} E^{p,q}(\Spec k(x)).
	\]
	Here $E^{p,q}(\Spec \struct_{X,x}^h)=\varinjlim E^{p,q}(U_\alpha)$ with the limit taken along the \etale neighborhoods of $x$ in $X$.
\end{corollary}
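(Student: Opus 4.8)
The plan is to deduce the corollary directly from Theorem~\ref{thm:rigidity_motivic_intro}, by translating, via Morel's identification $\GW(k)\cong\Hom_{\SH(k)}(\SSp,\SSp)$, the single hypothesis ``$\phi E=0$ with $\rank\phi$ invertible in $k$'' into one of the two numerical torsion conditions appearing there; which of the two one lands in depends on $\chark k$. First I would record that, $k$ being perfect, every finite extension $k(x)/k$ is separable, so the separability hypothesis of Theorem~\ref{thm:rigidity_motivic_intro} is automatically satisfied; thus it suffices to produce the relevant torsion condition on $E$.

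In the case $\chark k\neq 2$ I would use the elementary fact that the hyperbolic form $h=\la 1\ra+\la -1\ra\in\GW(k)$ annihilates the fundamental ideal $I(k)=\ker(\rank\colon\GW(k)\to\Z)$: for every $a\in k^*$ the binary form $\la a\ra+\la -a\ra$ is hyperbolic, hence $h\la a\ra=h$, and since $I(k)$ is generated as an abelian group by the elements $\la a\ra-\la 1\ra$ we obtain $h\cdot I(k)=0$. Writing $n=\rank\phi$ we have $\phi-n\la 1\ra\in I(k)$, so $h\phi=nh$ in $\GW(k)$. Applying the action of $\GW(k)$ on $E$ to this identity and using $\phi E=0$ gives $nh^{\SH}E=(h\phi)E=h(\phi E)=0$, which is exactly hypothesis~(1) of Theorem~\ref{thm:rigidity_motivic_intro} (recall $n$ is invertible in $k$ by assumption).

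In the case $\chark k=2$ I would instead invoke the structure of symmetric bilinear forms over a perfect field of characteristic~$2$: since $k$ is perfect, $k^*=(k^*)^2$, so every rank-one form is $\la a\ra\cong\la 1\ra$ and every nondegenerate symmetric bilinear form is stably isometric to an orthogonal sum of copies of $\la 1\ra$; consequently the rank map $\GW(k)\to\Z$ is an isomorphism. Hence $\phi=n\la 1\ra$ with $n=\rank\phi$, and the hypothesis $\phi E=0$ is literally the condition $nE=0$ with $n$ invertible in $k$, which is hypothesis~(2) of Theorem~\ref{thm:rigidity_motivic_intro}. In either case Theorem~\ref{thm:rigidity_motivic_intro} applies and yields the asserted isomorphism $i_x^*$.

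I do not expect a substantial obstacle beyond Theorem~\ref{thm:rigidity_motivic_intro} itself; the whole content is isolating the correct small piece of the ring structure of $\GW(k)$ in each characteristic, namely the vanishing $h\cdot I(k)=0$ for $\chark k\neq 2$ and the isomorphism $\GW(k)\cong\Z$ for $k$ perfect of characteristic~$2$. The only points requiring a little care are that these two facts are precisely what make the uniform hypothesis ``$\rank\phi$ invertible'' split cleanly into the two cases of the main theorem, and that one must have Morel's identification $\Hom_{\SH(k)}(\SSp,\SSp)\cong\GW(k)$ over the perfect base in order to interpret ``$\phi E=0$'' as a $\GW(k)$-action at all.
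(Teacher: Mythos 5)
Your proof is correct. For $\chark k\neq 2$ it is identical to the paper's: both use $h\phi=(\rank\phi)\,h$ in $\GW(k)$ (equivalently, $h\cdot I(k)=0$) to convert $\phi E=0$ into $nh^{\SH}E=0$ and invoke Theorem~\ref{thm:rigidity_motivic}; and you rightly make explicit the point, left implicit in the paper, that perfectness of $k$ guarantees the separability of $k(x)/k$ required there. For $\chark k=2$ your route differs from the paper's. You use the structural fact that for a perfect field of characteristic $2$ the rank map $\GW(k)\to\Z$ is an isomorphism (every form is stably a sum of copies of $\la 1\ra$, since $k^*=(k^*)^2$ and $\mathbb{H}\perp\la 1\ra\cong\la 1\ra^{\perp 3}$), so that $\phi=n\la 1\ra$ and $\phi E=0$ is literally $nE=0$. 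The paper instead avoids computing $\GW(k)$: it observes that an odd-rank element squares to $1$ in the Witt ring $W(k)$, hence $\phi^2=1+mh$ in $\GW(k)$ with $m=\tfrac{(\rank\phi)^2-1}{2}$, and then $(1+2m-mh)\phi^2=1+2m=(\rank\phi)^2$, giving $(1+2m)E=0$ with $1+2m$ odd. Your argument is more direct but uses perfectness of $k$ in an essential second way (to trivialize $\GW(k)$), whereas the paper's characteristic-$2$ manipulation only needs the diagonalizability of Witt classes and would survive without perfectness; since $k$ is assumed perfect throughout the corollary, both are equally valid here.
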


It is well known that Theorem~\ref{thm:intr_rigidity} follows from the following one via a geometric argument (see \cite[Proof of Theorem~2]{Gab92}, \cite[Proof of Theorem~4.4]{SV96} or the proof of Theorem~\ref{thm:rigidity} of the present paper). We say that $\relcurve\to S$ admits a fine compactification if there exists a projective closure $\relcurve\subset \relpcurve$ such that $\relpcurve-\relcurve$ is finite over $S$ (see Definition~\ref{def:fine_compactification}).

\begin{theorem}[Corollary~\ref{cor:rigidity_lemma}]
Let $S=\Spec R$ be the spectrum of a Henselian local ring, $\relcurve\to S$ be a smooth morphism of relative dimension $1$ admitting a fine compactification and $r_0,r_1\colon S\to \relcurve$ be morphisms of $S$-schemes such that $r_0(x)=r_1(x)$ for the closed point $x\in S$. Let $\cF$ be a homotopy invariant stable linear framed presheaf over $S$ and $n\in\NN$ be invertible in $R$. Suppose that either of the following holds.
\begin{enumerate}
\item
$2\in R^*$ and $nh\cF=0$;
\item
$2=0$ in $R$ and $n \cF=0$.
\end{enumerate}
Then $r_1^*=r_0^*\colon \cF(\relcurve) \to \cF(S)$.
\end{theorem}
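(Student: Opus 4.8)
The plan is to run the Suslin--Voevodsky--Gabber strategy, replacing transfers along correspondences by framed transfers. Using the fine compactification, the sections $r_0,r_1$ determine effective relative Cartier divisors $r_0(S),r_1(S)$ on $\relcurve$, disjoint from $\relpcurve_\infty:=\relpcurve-\relcurve$, and the framed formalism of Sections~\ref{sect:def}--\ref{sect:frpr} attaches to every such divisor $D$ (finite flat over $S$, in general position) an operation $D^*\colon\cF(\relcurve)\to\cF(S)$; for a section this operation is the pullback $r_i^*$. Two inputs from that formalism will be used: \emph{additivity} of $D\mapsto D^*$ along disjoint unions, and the \emph{$\A^1$-invariance} relation saying that if $D$ and $D'$ are linearly equivalent on $\relpcurve$ via a rational function that is a unit along $\relpcurve_\infty$, then $D^*=D'^*$. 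The latter is exactly where homotopy invariance of $\cF$ enters.

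First I would produce the divisibility. Since $r_0(x)=r_1(x)$, the class $[\struct_{\relpcurve}(r_1(S)-r_0(S))]$ in the relative Picard group $\Pic(\relpcurve,\relpcurve_\infty)$ restricts to zero on the closed fibre; as $\relpcurve\to S$ is projective with a section, this group is $\Pic_{(\relpcurve,\relpcurve_\infty)/S}(S)$, and (after a harmless modification of the compactification if needed) its relevant part is the group of $S$-points of a smooth commutative $S$-group scheme $\mathcal P$. An $S$-point reducing to the origin lies in the identity component, and since $m$ is invertible in $R$ the multiplication $[m]\colon\mathcal P\to\mathcal P$ is \'etale near the identity section; Henselianity of $R$ then lets us lift our class along $[m]$ to a class again trivial on the closed fibre. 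Applying this with $m=2n$ when $2\in R^*$ and with $m=n$ when $2=0$ in $R$, and representing the resulting degree-$0$ class by a difference $E_1-E_2$ of effective divisors, finite (and, by a standard moving argument, \'etale) over $S$, in general position with respect to $\relpcurve_\infty$ and the sections, we obtain a rational function $f$ on $\relpcurve$, a unit along $\relpcurve_\infty$, with $\divisor(f)=(r_1(S)+mE_2)-(r_0(S)+mE_1)$.

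Now I would assemble. By the $\A^1$-invariance relation, $(r_1(S)+mE_2)^*=(r_0(S)+mE_1)^*$, and by additivity along the (arranged) disjoint unions this reads $r_1^*+(mE_2)^*=r_0^*+(mE_1)^*$, i.e.\ $r_1^*-r_0^*=(mE_1)^*-(mE_2)^*$. The operation attached to the $m$-fold thickening $mE_i$ of $E_i$ is computed, via the framed transfer along the thickening (equivalently, the $\A^1$-degree of $t\mapsto t^m$), to be $m_\epsilon\cdot E_i^*$, where $m_\epsilon=\langle 1\rangle+\langle -1\rangle+\cdots$ ($m$ summands) acts through the natural module structure of $\cF$ over $\GW$ of the base (the same structure through which $h$ acts in the statement). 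Hence $r_1^*-r_0^*=m_\epsilon\,(E_1^*-E_2^*)$. When $2\in R^*$ we chose $m=2n$, and $(2n)_\epsilon=nh$, so $r_1^*-r_0^*=nh\,(E_1^*-E_2^*)=0$ because $nh\cF=0$. When $2=0$ in $R$ we chose $m=n$, and $n_\epsilon=n\langle 1\rangle=n$, so $r_1^*-r_0^*=n\,(E_1^*-E_2^*)=0$ because $n\cF=0$. This also explains the dichotomy in the hypotheses.

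The main obstacle is getting the two halves to mesh: one must know that the relative Picard functor of a merely \emph{fine} (not necessarily smooth) compactification is representable near the identity by a smooth $S$-group scheme, so that the Hensel divisibility applies, and one must perform the moving so that the divided class is realized by divisors $E_1,E_2$ simultaneously disjoint from each other, from $r_0(S),r_1(S)$, and from $\relpcurve_\infty$, with the linear equivalence implemented by an $f$ admissible for the framed pairing. The conceptual heart, and the source of the factor $h$, is the computation that the framed transfer along an $m$-fold thickening is multiplication by the quadratic integer $m_\epsilon$ rather than by $m$: harmless in characteristic $2$, where $m_\epsilon=m$, but forcing one to spend the extra factor $2$ (available since $2\in R^*$) in order to turn $(2n)_\epsilon$ into $nh$ otherwise.
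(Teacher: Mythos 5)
Your overall strategy coincides with the paper's: the proof of Theorem~\ref{thm:main} likewise produces a $2n$-fold (resp.\ $n$-fold) divisibility of $\struct_{\relpcurve}(r_1(S))\otimes\struct_{\relpcurve}(r_0(S))^{-1}$ by a class trivial on the closed fibre, implements the resulting linear equivalence as an explicit $\A^1$-family of framed correspondences, splits off the contributions of $r_0(S)$ and $r_1(S)$ by additivity of supports, and identifies the transfer along the thickened residual divisor with $nh$ (resp.\ $n\sigma$) times something. Your ``$m_\epsilon$ versus $m$'' computation is precisely Lemma~\ref{lm:hyperbolic}, and you have correctly located it as the source of the dichotomy in the hypotheses. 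Where you diverge is the divisibility step: you go through representability of the relative Picard functor by a smooth $S$-group scheme and Hensel lifting along the \'etale isogeny $[m]$, and you rightly flag representability for a merely fine compactification as an obstacle --- it is one, since $\relpcurve$ is not assumed semistable or even reduced. The paper sidesteps this entirely: Lemma~\ref{lm:Picard_rigidity} extracts the same divisibility from the Kummer sequence together with proper base change for $\coh^1_{\et}(-,\mu_n)$ and $\coh^2_{\et}(-,\mu_n)$, a diagram chase requiring only that $\relpcurve\to S$ be projective. That route is the one to take.

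The genuine gap is your premise that the framed formalism ``attaches to every such divisor $D$ an operation $D^*$'' with $r_i(S)^*=r_i^*$. There is no canonical such operation: a framed transfer requires presenting $D$ as the vanishing locus of actual functions (a trivialization of the line bundle near $D$ together with a normal framing of $\relcurve$ in affine space, Lemma~\ref{lm:framing}), and two presentations differ by multiplying the framing functions by units, which changes the class in $\ZF_*$ by a unit of $\GW$ --- exactly the ambiguity that the hypotheses do not allow you to ignore. Concretely, the operation attached to the divisor $r_i(S)$ equals $r_i^*$ only when the Jacobian of the resulting framed correspondence is $1$ (Lemma~\ref{lm:suspension}); otherwise you obtain $\la u\ra\, r_1^*-\la v\ra\, r_0^*\in nh\,\cF(S)$ for unknown units $u,v\in R^*$, which does not give the statement. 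The end of the proof of Theorem~\ref{thm:main} spends real effort arranging both Jacobians to be $1$ simultaneously: one rescales a framing function by $\alpha^{-1}$ to normalize the $r_1$-term, and then corrects the $r_0$-term by replacing $\xi_0$ with $\beta^{-1}\xi_0$ for a $2n$-th root $\beta$ of the leftover unit --- a root that exists in $R^*$ precisely because that unit is $1$ modulo $\mm_x$ (this uses property (4) of Lemma~\ref{lm:sections}, i.e.\ the agreement of the two sections on the closed fibre) and $R$ is Henselian with $2n\in R^*$. So the ``meshing'' you defer is not merely a matter of general position of divisors; it is where the $\GW$-linearity of framed transfers has to be explicitly tamed, and your proposal needs this step to close.
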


This theorem follows from the following one that deals only with framed correspondences (see Section~\ref{sect:def} for the notation). 

\begin{theorem}[Theorem~\ref{thm:main}] \label{thm:intr_main}
Let $S=\Spec R$ be the spectrum of a Henselian local ring, $\relcurve\to S$ be a smooth morphism of relative dimension $1$ admitting a fine compactification and $r_0,r_1\colon S\to \relcurve$ be morphisms of $S$-schemes such that $r_0(x)=r_1(x)$ for the closed point $x\in S$. Then for every $n\in \NN$ such that $n\in R^*$ the following holds.
\begin{enumerate}
\item
If $2\in R^*$ then
\[
\la \sigma_{\relcurve}^m \ra \circ r_1 - \la \sigma_{\relcurve}^m \ra \circ r_0 = H \circ i_1 - H \circ i_0   + n h_{\relcurve}\circ a
\]
for some $m\in \mathbb{N}$, $H\in \ZF_m^S(\A^1\times S,\relcurve)$ and $a\in \ZF_{m-1}^S(S,\relcurve)$.
\item
If $2=0$ in $R$ then 
\[
\la \sigma_{\relcurve}^m \ra \circ r_1 - \la \sigma_{\relcurve}^m \ra \circ r_0 = H \circ i_1 - H \circ i_0   + n \sigma_\relcurve \circ a
\]
for some $m\in \mathbb{N}$, $H\in \ZF_m^S(\A^1\times S,\relcurve)$ and $a\in \ZF_{m-1}^S(S,\relcurve)$.
\end{enumerate}
Here $i_0,i_1\colon S\to \A^1\times S$ are the closed immersions given by $\{0\}\times S$ and $\{1\}\times S$ respectively.
\end{theorem}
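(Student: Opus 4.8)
The plan is to reduce the asserted identity to a divisibility statement in the relative Picard group of the compactified curve, produce a rational function witnessing that divisibility, and then translate this function --- together with the divisors it involves --- entirely into the language of framed correspondences.

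First, fix a fine compactification $\relcurve\subset\relpcurve$, so that $\relpcurve_\infty:=\relpcurve\setminus\relcurve$ is finite over $S$, and let $D_0,D_1\subset\relcurve$ be the relative Cartier divisors cut out by the sections $r_0,r_1$; each is finite of degree one over $S$ and disjoint from $\relpcurve_\infty$. The hypothesis $r_0(x)=r_1(x)$ means that the class of $D_1-D_0$ in the relative Picard group of $\relpcurve/S$ (with a trivialization along $\relpcurve_\infty$) restricts to zero over the closed point $x$, hence lies in the formal part of the corresponding relative Picard scheme. Multiplication by an integer invertible in $R$ is \'etale along the identity section of a smooth group scheme, so --- $S$ being Henselian local --- this formal part is uniquely divisible by every such integer. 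Taking $N=2n$ when $2\in R^*$ and $N=n$ when $2=0$ in $R$, we obtain a divisor $D'=D_+-D_-$ with $D_\pm\subset\relcurve$ finite over $S$ of equal degree, together with a rational function $f$ on $\relpcurve$ such that $\divisor(f)=D_1-D_0-ND'=(D_1+ND_-)-(D_0+ND_+)$; after correcting $f$ by a unit we may assume $f\equiv1$ near $\relpcurve_\infty$, so that the zero and pole loci of $f$ are finite over $S$ and contained in $\relcurve$.

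Next, repackage $f$ as a homotopy of framed correspondences. Using Nisnevich-local coordinates on $\relcurve$, one builds from $f$ and finitely many auxiliary framing functions a homotopy $H\in\ZF_m^S(\A^1\times S,\relcurve)$ whose restrictions $H\circ i_1$ and $H\circ i_0$ are the level-$m$ framed correspondences carried by the zero divisor $f^{-1}(0)=D_1+ND_-$ (framed by $f$) and by the pole divisor $f^{-1}(\infty)=D_0+ND_+$ (framed by $1/f$), each sent to $\relcurve$ by the inclusion. This is the usual mechanism by which a rational function on a relative curve connects the classes of its zero and pole loci, made legitimate here by the finiteness of $\relpcurve_\infty$ over the Henselian base. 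It remains to decompose the two boundary terms. Since a framed correspondence is additive in its supporting zero-cycle, the zero-divisor term splits as the contribution of the reduced section $D_1$ --- which, after the level-$m$ suspension bookkeeping and after absorbing the residual unit obtained by comparing the framing $f$ with a local parameter at $D_1$, is $\la\sigma_\relcurve^m\ra\circ r_1$ --- plus the contribution of the thickened divisor $ND_-$; symmetrically for the pole divisor.

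The thickened contributions are computed from the $\A^1$-local degree of $t\mapsto t^N$ at the origin. When $2\in R^*$ and $N=2n$, this local degree equals $n$ times the hyperbolic form, and since the hyperbolic form absorbs unit twists, the contributions of $ND_-$ and $ND_+$ assemble into $nh_\relcurve$ composed with a framed correspondence built from $D_+$ and $D_-$; taking $a\in\ZF_{m-1}^S(S,\relcurve)$ to be that correspondence and rearranging gives assertion~(1). When $2=0$ in $R$ and $N=n$, the hyperbolic form is $2\la 1\ra$ and the same local-degree computation produces $n\sigma_\relcurve$ in place of $nh_\relcurve$, yielding assertion~(2). The main obstacle is precisely this translation and bookkeeping: expressing the classical Picard/rational-function homotopy inside $\ZF_*^S$, tracking levels, framings and unit twists through the additivity and local-degree computations, and pinpointing where characteristic $2$ degenerates the hyperbolic form and forces the weaker correction term; by contrast, the Picard-group divisibility input is a routine consequence of the Henselian hypothesis.
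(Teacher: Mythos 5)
Your overall strategy is the same as the paper's: divisibility in the (relative) Picard group over the Henselian base produces an $N$-th root of $\struct(r_0(S)-r_1(S))$ trivial on the closed fiber, a pencil of sections gives a framed $\A^1$-homotopy whose endpoints decompose by additivity of supports into a reduced part and an $N$-fold thickened part, Lemma~\ref{lm:hyperbolic} converts the thickened part into $nh_\relcurve\circ a$ (resp.\ $n\sigma_\relcurve\circ a$ in characteristic $2$), and the reduced part is identified with a suspension of $r_i$. The Picard input, the choice of auxiliary sections with disjoint zero loci finite over $S$, and the finiteness needed for Lemma~\ref{lm:hyperbolic} are all routine, as you say.

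The genuine gap is in the sentence where you identify the reduced contribution with $\la\sigma_\relcurve^m\ra\circ r_1$ ``after absorbing the residual unit obtained by comparing the framing $f$ with a local parameter at $D_1$.'' Absorbing a unit from a framing function is \emph{not} a legal move in $\ZF_*^S$: Lemma~\ref{lm:basic_homotopies}(\ref{lm:basic_homotopies:derivative}) only allows replacing $\phi$ by $\alpha\phi$ when $\alpha|_Z=1$, and for good reason --- a correspondence $(r_1(S),U,u\phi,j)$ with $u$ a general unit represents the class $\la u\ra$ rather than $\la 1\ra$, so unrestricted absorption would collapse $\GW$ to $\Z$ and make the whole theorem trivially about $n$-torsion. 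The recognition criterion is Lemma~\ref{lm:suspension}, which requires the Jacobian to be \emph{exactly} $1$. The paper arranges this by rescaling one of the auxiliary normal-framing functions by $\alpha^{-1}$ so that the Jacobian at $Z(s_1)$ becomes $1$; but this single rescaling affects both endpoints, so one must then separately force the Jacobian at $Z(s_0)$ to be $1$. That is possible only because the sections are chosen to agree on the closed fiber (using $r_0(x)=r_1(x)$), so the second Jacobian is $\equiv 1\bmod\mm_x$, and because $R$ is Henselian with $2n\in R^*$, so one can extract a $2n$-th root $\beta\equiv 1$ of that Jacobian and fold $\beta^{-1}$ into a modified section $\widetilde{\xi}_0$ (the $2n$-th power structure of the thickened factor is essential here). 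Your sketch neither normalizes the two units compatibly nor explains why the leftover unit at $D_0$ is a $2n$-th power; without that, the argument terminates at $\la\sigma^m\ra\circ r_1-\la u\ra\circ\la\sigma^m\ra\circ r_0+nh_\relcurve\circ a$ for an uncontrolled unit $u$, which is strictly weaker than the claim. The same unit issue already appears one step earlier: the endpoints of the pencil homotopy are framed by the numerators divided by the fixed denominator $\zeta$, not literally by $f$ and $1/f$, and the discrepancy is again a unit that must be tracked rather than absorbed.
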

One can regard this theorem as a framed analog of the divisibility properties of Suslin homology $\mathrm{H}_0^{s}(\relcurve/S)$ that arise from its identification with the relative Picard group \cite[Theorem~3.1]{SV96} and divisibility properties of the Picard group. 

Let us give a sketch of the proof of Theorem~\ref{thm:intr_main} assuming that $2\in R^*$. First observe that if we prove the theorem for an open subscheme $\relcurve'\subset \relcurve$ then we get the claim for $\relcurve$ as well, thus we may shrink $\relcurve$ at will. Let $\relcurve\subset \relpcurve$ be a fine compactification of $\relcurve$. Set $\cL_0=\struct_{\relpcurve}(r_0(S))$, $\cL_1=\struct_{\relpcurve}(r_1(S))$. It follows from the rigidity property of the \etale cohomology with finite coefficients (see Lemma~\ref{lm:Picard_rigidity}) that there exists a line bundle $\cL$ over $\relpcurve$ such that $\cL_0\cong \cL_1\otimes \cL^{\otimes 2n}$ and $\cL|_{\overline{C}}\cong \struct_{\overline{C}}$, where $\overline{C}$ is the closed fiber of $\relpcurve$. The line bundles $\cL_0$ and $\cL_1$ are equipped with section $s_0$ and $s_1$ such that the zero loci are given by $Z(s_0)=r_0(S)$, $Z(s_1)=r_1(S)$. Without loss of generality we may assume that $s_0|_{\overline{C}}=s_1|_{\overline{C}}$ (up to the isomorphism $\cL_0|_{\overline{C}}\cong \cL_1|_{\overline{C}}$). Twisting with a sufficiently high power of $\struct_{\relpcurve}(1)$ we choose 
\[
\zeta\in\Gamma(\relpcurve,\cL_0(2nN)),\quad \xi_0\in\Gamma(\relpcurve,\struct_{\relpcurve}(N)),\quad \xi_1\in\Gamma(\relpcurve,\cL(N))
\]
such that $s_0\otimes\xi_0^{\otimes 2n}|_{\overline{C}\cup Z(\zeta)}=s_1\otimes \xi_1^{\otimes 2n}|_{\overline{C}\cup Z(\zeta)}$ (again, up to the isomorphism of line bundles) and 
\[
\relpcurve-\relcurve\subset Z(\zeta),\quad Z(\zeta)\cap Z(\xi_0)=Z(\zeta)\cap Z(\xi_1)=Z(s_0)\cap Z(\xi_0)=Z(s_1)\cap Z(\xi_1)=\emptyset.
\]
Then, identifying $\cL_0(2nN)=\cL_0\otimes \struct_{\relpcurve}(N)^{\otimes 2n}\cong \cL_1\otimes \cL(N)^{\otimes 2n}$, we obtain morphisms
\begin{gather*}
[ts_0\otimes \xi_0^{\otimes 2n}+ (1-t)s_1\otimes \xi_1^{\otimes 2n} : \zeta ] \colon \A^1\times \relpcurve \to \PP^1_S, \\
\frac{\Upsilon_1}{\Upsilon_0}=\frac{ts_0\otimes \xi_0^{\otimes 2n}+ (1-t)s_1\otimes \xi_1^{\otimes 2n}}{\zeta} \colon \A^1\times (\relpcurve-Z(\zeta)) \to \A^1_S,
\end{gather*}
with $t$ being the coordinate on $\A^1$. One can easily see that $Z(\Upsilon_1)=Z(\tfrac{\Upsilon_1}{\Upsilon_0})$ is finite over $\A^1\times S$ whence $\tfrac{\Upsilon_1}{\Upsilon_0}$ gives a homotopy between $\tfrac{s_0\otimes \xi_0^{\otimes 2n}}{\zeta}$ and $\tfrac{s_1\otimes \xi_1^{\otimes 2n}}{\zeta}$. 

At this point we obtained a kind of a ``refined'' proof of the rigidity property for presheaves with $\operatorname{Cor}$-transfers. The divisor of $\tfrac{\Upsilon_1}{\Upsilon_0}\in R[\relpcurve-Z(\zeta)]$ gives an element of $\operatorname{Cor}(\A^1\times S,\relcurve)$ that is a homotopy between the divisors of  $\tfrac{s_0\otimes \xi_0^{\otimes 2n}}{\zeta}$ and $\tfrac{s_1\otimes \xi_1^{\otimes 2n}}{\zeta}$. Expanding 
\[
\divisor_0 \tfrac{s_0\otimes \xi_0^{\otimes 2n}}{\zeta} = [r_0(S)] + 2n \divisor_0 (\xi_0),\quad \divisor_0 \tfrac{s_1\otimes \xi_1^{\otimes 2n}}{\zeta} = [r_1(S)] + 2n \divisor_0 (\xi_1)
\]
one obtains the claim in the $\operatorname{Cor}$-setting.

In order to obtain the claim in the framed setting choose some regular functions $\phi_1,\hdots,\phi_m$ on $U\subset \A^{m+1}_S$ such that $\relcurve=Z(\phi_1,\hdots,\phi_m)$ and choose an \etale neighborhood $W\to U$ of $\relcurve$ with a retraction $\rho\colon W\to \relcurve$ (recall that we could shrink $\relcurve$ from the beginning). Thus we have a framed homotopy
\[
(Z(\Upsilon_1), \A^1\times W,(\phi_1,\hdots,\phi_m,\tfrac{\Upsilon_1}{\Upsilon_0}),\rho)\in \Fr_{m+1}(\A^1\times S,\relcurve)
\]
yielding
\begin{multline*}
(r_0(S)\sqcup Z(\xi_0), W,(\phi_1,\hdots,\phi_m,\tfrac{s_0\otimes \xi_0^{\otimes 2n}}{\zeta}\circ \rho),\rho)\simA \\
\simA (r_1(S)\sqcup Z(\xi_1), W,(\phi_1,\hdots,\phi_m,\tfrac{s_1\otimes \xi_1^{\otimes 2n}}{\zeta}\circ \rho),\rho)\in \Fr_{m+1}(S,\relcurve).
\end{multline*}
Decomposing the framed correspondences with respect to the decomposition of supports (and possibly replacing $\phi_m$ with $\alpha\phi_m$ for some $\alpha \in R^*$ and slightly modifying $\xi_0$) one sees that the claim of the Theorem~\ref{thm:intr_main} follows from the next two lemmas: the first one is a tool to recognize the suspension of a morphism of schemes and the second one is a framed version of the claim that the morphism $\T\to\T$ given by $x\mapsto x^{2n}$ corresponds to $nh\in\GW(k)$ in the motivic stable homotopy category.

\begin{lemma}[Lemma~\ref{lm:suspension}]
Let $S=\Spec R$ be the spectrum of a local ring and $\relcurve$ be a scheme over $S$. Consider an explicit framed correspondence 
\[
(Z,U,\phi,g)=(Z,p\colon U\to \A^m_S,(\phi_1,\phi_2,\hdots,\phi_m),g)\in \Fr_m(S,\relcurve).
\]
Suppose that
\begin{enumerate}
\item
the projection $Z\to S$ is an isomorphism;
\item
$\det \left(\left(\tfrac{\partial \phi_i}{\partial x_j}\right)_{i,j}\right)=1$, where $x_j$-s are the local coordinates around $Z$ given by the \'etaleness of $p$ and the standard coordinates on $\A^m$.
\end{enumerate}
Then $(Z,U,\phi,g)\simA \sigma_\relcurve^m \circ g \circ \rho^{-1}$ with $\rho\colon Z\to S$ being the projection.
\end{lemma}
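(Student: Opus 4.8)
The plan is to connect $(Z,U,\phi,g)$ to $\sigma_\relcurve^m\circ g\circ\rho^{-1}$ by two explicit $\A^1$-homotopies in $\Fr_m(\A^1\times S,\relcurve)$: the first replacing the arbitrary unit-Jacobian framing $\phi$ by a standard one, the second replacing the étale neighbourhood and the map $g$ by the standard data $(\A^m_S,\id)$ and $(g|_Z\circ\rho^{-1})\circ\operatorname{pr}_S$. First I would normalize. The class of an explicit framed correspondence is unchanged by shrinking $U$ to a smaller étale neighbourhood of $Z$, and translating $\A^m_S$ by an $S$-point is realized by the obvious linear $\A^1$-homotopy; since $Z\to S$ is an isomorphism, $Z\hookrightarrow U\to\A^m_S$ is a closed immersion onto the graph of a section, so after shrinking and translating I may assume $Z=p^{-1}(\{0\}\times S)$, hence $I(Z)=(x_1,\dots,x_m)$ in $\struct_{U,Z}$. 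Then $V(\phi_1,\dots,\phi_m)=Z$ inside $U$, and by condition~(2) the $S$-morphism $\Phi:=(\phi_1,\dots,\phi_m)\colon U\to\A^m_S$, which satisfies $\Phi^*t_i=\phi_i$, is étale along $Z$; shrinking once more, $\Phi$ is étale on $U$ with $\Phi^{-1}(\{0\}\times S)=Z$.

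\emph{Step 1 (normalizing the framing).} I claim $(Z,U,p,\phi,g)\simA(Z,U,\Phi,\phi,g)$, the point being that the latter has framing functions $\phi_i=\Phi^*t_i$ equal to the standard coordinates attached to its structure morphism $\Phi$. Since $R$ is local, $Z\cong S$ has a single closed point and $\struct_{U,Z}$ is local; from $\phi_i\in I(Z)=(x_1,\dots,x_m)$ one gets $\Phi=A\cdot p$ with $A=(a_{ij})\in M_m(\struct_{U,Z})$, and differentiating shows $A|_Z=\big((\partial\phi_i/\partial x_j)|_Z\big)$, so by~(2) $\det A|_Z=1$ and $A\in\GL_m(\struct_{U,Z})$. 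Over the local ring $\struct_{U,Z}$ one writes $A=E\cdot\operatorname{diag}(\det A,1,\dots,1)$ with $E$ a product of elementary matrices; if $E_\tau$ is the obvious elementary $\A^1$-path from $I$ to $E$, set $A_\tau:=E_\tau\cdot\operatorname{diag}(1+\tau(\det A-1),1,\dots,1)$, so $A_0=I$, $A_1=A$, and — crucially, because $\det A|_Z=1$ — $A_\tau|_Z=E_\tau|_Z\in\GL_m(R)$ for every $\tau$. Then $\Psi\colon(\tau,u)\mapsto(\tau,\,A_\tau(u)\cdot p(u))$ is étale at every point of $\A^1\times Z$ (there its fibrewise Jacobian is $A_\tau|_Z$), hence on an open neighbourhood $\tilde U\supseteq\A^1\times Z$; and $(\A^1\times Z,\tilde U,\Psi,\operatorname{pr}_U^*\phi,g\circ\operatorname{pr}_U)$ is an explicit framed correspondence over $\A^1\times S$ whose restrictions at $0,1$ are $(Z,U,p,\phi,g)$ and $(Z,U,\Phi,\phi,g)$.

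\emph{Step 2 (normalizing the neighbourhood and the map).} Write $q:=\Phi$, so the framing equals $q^*(t_i)$, and let $\mu\colon\A^1\times\A^m_S\to\A^m_S$, $\mu(\tau,v)=\tau\cdot v$. Put $\tilde V:=(\A^1\times\A^m_S)\times_{\mu,\A^m_S,q}U$, étale over $\A^m_{\A^1\times S}$ via $\tilde p(\tau,v,u)=(\tau,v)$, equipped with framing $(v_1,\dots,v_m)=\tilde p^*(t_i)$, support $\tilde Z:=\{v=0\}=\A^1\times\{0\}\times Z$ — isomorphic, hence finite, over $\A^1\times S$ since $Z\cong S$ — and map $\tilde g(\tau,v,u)=g(u)$. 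Restricting at $\tau=1$ the fibre forces $v=q(u)$, identifying $\tilde V|_1\cong U$ with structure morphism $q$ and framing $\phi$, so we recover $(Z,U,\Phi,\phi,g)$. Restricting at $\tau=0$ the fibre forces $q(u)=0$, that is $u\in Z$, so $\tilde V|_0\cong\A^m_S\times_S Z$ with framing $(v_i)$, support $\{0\}\times_S Z$ and map $g|_Z\circ\operatorname{pr}_Z$; under the canonical isomorphism $\A^m_S\times_S Z\cong\A^m_S$ over $\A^m_S$ (induced by $Z\cong S$) this is precisely $(\{0\}\times S,\A^m_S,\id,(t_i),(g|_Z\circ\rho^{-1})\circ\operatorname{pr}_S)=\sigma_\relcurve^m\circ g\circ\rho^{-1}$. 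Concatenating the two homotopies proves the lemma.

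\emph{The main obstacle.} The real content lies in Step~1, and precisely at the place where condition~(2) is used in its exact form: it is only because the Jacobian equals $1$ \emph{on} $Z$ that the determinant factor $\operatorname{diag}(\det A,1,\dots,1)$ of the change of coordinates can be deformed to the identity through matrices staying invertible along $Z$ — a determinant that is merely a square, or a general unit, obstructs this, reflecting the non-triviality of $\K_1$ — while the localness of $R$ is what allows that factor to be split off from an elementary matrix at all. The rest is bookkeeping: checking that each family is a genuine explicit framed correspondence over $\A^1\times S$ (support closed and finite, structure morphism étale near it, framing cutting it out) and that the fibres over $0$ and $1$ agree, after the evident relabellings, with the asserted correspondences.
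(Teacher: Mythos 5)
Your proof is correct, and its skeleton is the same as the paper's: translate $Z$ to the zero section, exploit the fact that over the local ring $\struct_{U,r(y)}$ the matrix $A$ relating $\phi$ to the coordinate functions satisfies $\det A|_Z=1$ and hence factors through elementary matrices, and finish with the $\A^1$-scaling contraction. Where you differ is in the implementation of the middle step. The paper keeps the \'etale structure morphism $p$ fixed and deforms the framing functions $\phi\mapsto\phi\cdot A=\widetilde{x}$ using Lemma~\ref{lm:basic_homotopies} (after first normalizing $g$ by Lemma~\ref{lm:framed_function}); you instead keep the framing fixed and deform the structure morphism $p\rightsquigarrow A\cdot p=\Phi$ along the path $A_\tau\cdot p$, the key point being your observation that $A_\tau|_Z=E_\tau|_Z$ stays invertible, which is what makes $\Psi$ \'etale along the support. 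Your Step~2 is then the contraction of Lemma~\ref{lm:framed_function}, but performed with $\Phi$ rather than $p$ as the structure morphism, which collapses the neighbourhood, the framing and the map $g$ to the standard suspension in one stroke. Both routes rest on exactly the same algebra (Whitehead-type factorization over a local ring plus the determinant condition on $Z$), so this is a legitimate variant rather than a new idea, but it is a clean one. Your bookkeeping (closedness and finiteness of the supports $\A^1\times\{0\}\times S$, \'etaleness of $\Psi$ and of the base-changed $\tilde p$, identification of the fibres at $\tau=0,1$) checks out. The only implicit point, shared verbatim with the paper's own argument and with Definition~\ref{def:Jac}, is the inclusion $\phi_i\in(x_1,\dots,x_m)$ in $\struct_{U,r(y)}$, which uses that this ideal cuts out $Z$ there; this is harmless in the situations where the lemma is applied.
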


\begin{lemma}[Lemma~\ref{lm:hyperbolic}]
Let $S$ be a scheme and $X,Y$ be schemes over $S$. Consider an explicit framed correspondence $(Z,U,(\phi_1,\hdots,\phi_{m-1},\alpha\phi_{m}^{2n}),g)\in \Fr_{m}(X,Y)$ such that $\alpha\in\Gamma(U,\struct_U^*)$ and $(\phi_{m}, \pi_X)\colon Z(\phi_1,\hdots,\phi_{m-1})\to \A^1\times X$ is finite with $\pi_X\colon Z(\phi_1,\hdots,\phi_{m-1}) \to X$ being the projection. Then
\[
\sigma_Y\circ \la Z,U,(\phi_1,\hdots,\phi_{m-1},\alpha\phi_{m}^{2n}),g \ra \simA nh_Y \circ \la Z,U,(\phi_1,\hdots,\phi_{m-1},\alpha\phi_{m}),g\ra .
\]
\end{lemma}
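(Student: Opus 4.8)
The statement is the framed-correspondence incarnation of the classical identity $[2n]_\epsilon=nh$ in $\GW$, where $[k]_\epsilon=\sum_{i=0}^{k-1}\la(-1)^i\ra$ is the $\A^1$-degree of the self-map $x\mapsto x^{k}$ of $\T$. The plan is to induct on $n$: at each stage a single ``degree-lowering'' $\A^1$-homotopy peels off one hyperbolic summand from the last framing function. Two features of the set-up do the work. The suspension $\sigma_Y$ supplies the extra coordinate needed to trade a rescaling of the last framing coordinate by a unit $u$ for composition with the class $\la u\ra$ (such a rescaling is not itself an $\A^1$-homotopy at the given level); and the hypothesis that $(\phi_m,\pi_X)\colon Z(\phi_1,\dots,\phi_{m-1})\to\A^1\times X$ be finite is precisely what keeps the supports occurring in the homotopies finite over $\A^1\times X$, so that they define genuine elements of $\ZF$. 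Throughout write $W=Z(\phi_1,\dots,\phi_{m-1})$, so that $Z$ is the fibre of $(\phi_m,\pi_X)$ over $\{0\}\times X$, and set $Z'=Z(\phi_1,\dots,\phi_{m-1},\phi_m-1)$, the fibre over $\{1\}\times X$, which is again finite over $X$; put $c=\la Z,U,(\phi_1,\dots,\phi_{m-1},\alpha\phi_m),g\ra$.

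For the base case $n=1$ I would use the family of framed correspondences with framing $(\phi_1,\dots,\phi_{m-1},\alpha\phi_m(\phi_m-\tau))$, $\tau\in\A^1$. Its support inside $\A^1_\tau\times U$ is $(Z\times\A^1_\tau)\cup\{(\phi_m(w),w):w\in W\}$, finite over $\A^1_\tau\times X$ because $Z$ is finite over $X$ and $(\phi_m,\pi_X)$ is finite, so this is a homotopy in $\ZF$ from $\la Z,U,(\phi_1,\dots,\phi_{m-1},\alpha\phi_m^{2}),g\ra$ at $\tau=0$ to a correspondence supported on $Z\sqcup Z'$ with framing $\alpha\phi_m(\phi_m-1)$ at $\tau=1$. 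Splitting the latter over $Z\sqcup Z'$ by additivity, near $Z$ the last framing is $(\alpha\phi_m)\cdot(\phi_m-1)$ with $\phi_m-1$ a unit of constant value $-1$ on $Z$, and near $Z'$ it is $\phi_m\cdot\bigl(\alpha(\phi_m-1)\bigr)$ with $\phi_m$ a unit of constant value $1$ on $Z'$. After applying $\sigma_Y$ and extracting these units by the linearity of framed correspondences (cf.\ Lemma~\ref{lm:suspension}), enlarging the neighbourhoods back to $U$, and translating $Z'$ onto $Z$ by the homotopy with framing $(\phi_1,\dots,\phi_{m-1},\alpha(\phi_m-1+\tau))$ (again finite over $\A^1_\tau\times X$), one gets $\sigma_Y\circ\la Z,U,(\phi_1,\dots,\phi_{m-1},\alpha\phi_m^{2}),g\ra\simA\la -1\ra_Y\circ c+\la 1\ra_Y\circ c=h_Y\circ c$, as $h=\la 1\ra+\la -1\ra$.

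The inductive step ($n\ge2$) runs analogously, now with the family with framing $(\phi_1,\dots,\phi_{m-1},\alpha\phi_m^{2(n-1)}(\phi_m-\tau)^{2})$, which connects $\la\dots,\alpha\phi_m^{2n},\dots\ra$ (support $Z$) to a correspondence supported on $Z\sqcup Z'$ with framing $\alpha\phi_m^{2(n-1)}(\phi_m-1)^{2}$. Near $Z$ the factor $(\phi_m-1)^{2}$ is a square unit of value $1$ on $Z$: extracting it after suspending, enlarging back to $U$, and applying the inductive hypothesis to $\la\dots,\alpha\phi_m^{2(n-1)},\dots\ra$ contributes $(n-1)h_Y\circ c$. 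Near $Z'$ the factor $\phi_m^{2(n-1)}$ is a square unit of value $1$ on $Z'$: extracting it, enlarging back to $U$, applying the base case to $\la\dots,\alpha(\phi_m-1)^{2},\dots\ra$ (legitimate since $(\phi_m-1,\pi_X)$ is finite), and translating $Z'$ onto $Z$ contributes $h_Y\circ c$. Adding the two gives $\sigma_Y\circ\la Z,U,(\phi_1,\dots,\phi_{m-1},\alpha\phi_m^{2n}),g\ra\simA nh_Y\circ c$, the claim.

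The step I expect to be the main obstacle is the unit-extraction move itself — passing from $\sigma_Y\circ\la Z,U,(\phi_1,\dots,\phi_{m-1},u\psi),g\ra$ to $\la u|_Z\ra_Y\circ\la Z,U,(\phi_1,\dots,\phi_{m-1},\psi),g\ra$ for a nowhere-vanishing $u$ near $Z$ — together with the accompanying bookkeeping: one must first shrink the étale neighbourhood so that $u$ is an honest unit, perform the extraction, and then enlarge the neighbourhood back to all of $U$ so that the finiteness hypothesis on $(\phi_m,\pi_X)$ remains available for the next homotopy. It helps that in this argument every extracted unit has constant value $\pm1$ on the relevant support, so only the constant-unit case of the extraction lemma is needed, and it should follow from Lemma~\ref{lm:suspension} and elementary manipulations of the framing; the residual algebra is just the identities $\la 1\ra+\la -1\ra=h$ and $\la v^{2}\ra=\la 1\ra$, which hold over an arbitrary base and in every characteristic.
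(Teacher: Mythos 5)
Your argument is correct and is essentially the paper's own proof: the same degree-lowering homotopy (shift one factor of the power of $\phi_m$ by the homotopy parameter, with finiteness of $(\phi_m,\pi_X)$ guaranteeing that the resulting support, the union of $\A^1\times Z$ and a copy of the graph of $\phi_m$, stays finite over $\A^1\times X$), the same additivity split, unit removal, and translation of $Z'$ back onto $Z$ --- the paper merely organizes this as a single telescoping recursion $\la \alpha\phi^{N+1}\ra \simA \la \alpha\phi^{N}\ra + \la (-1)^N\alpha\phi\ra$ on the exponent instead of your induction on $n$. The unit-extraction step you flag as the main obstacle is handled in the paper not by Lemma~\ref{lm:suspension} but by Lemma~\ref{lm:basic_homotopies}(\ref{lm:basic_homotopies:derivative}) (removing units restricting to $1$ on the support, after shrinking the \etale neighbourhood) together with Lemma~\ref{lm:basic_homotopies2}(\ref{lm:basic_homotopies2:hyper}) for converting $\la\psi\ra+\la-\psi\ra$ into $h_Y\circ\la\psi\ra$ after suspension, exactly as you anticipate.
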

Both lemmas are proved via explicit manipulations with framed correspondences.

The paper is organized as follows. In Section~2 we recall the definitions of framed correspondences. In Section~3 we prove a number of technical lemmas constructing framed homotopies. Section~4 deals with the basic properties of framed presheaves. In Section~5 we recall the divisibility properties of the Picard group and prove some technical lemmas of geometric nature about the general sections of very ample line bundles and relative curves. In Section~6 we prove the rigidity theorem for framed presheaves. In Section~7 we give the construction of framed transfers for a representable cohomology theory and derive the corresponding rigidity theorem.

\subsection*{Acknowledgments} The authors would like to thank Ivan Panin and the participants of the seminar on $\A^1$-homotopy and $\K$-theory at St. Petersburg for many helpful discussions. The last part of the work was done during the first author's stay at the Institute Mittag-Leffler whose hospitality he gratefully acknowledges. The research is supported by the Russian Science Foundation grant 14-21-00035

\section{Preliminaries on framed correspondences} \label{sect:def}

All schemes are supposed to be separated and noetherian.

\begin{definition}
Let $X$ be a scheme and $Z$ be a closed subscheme of $X$. An \textit{\etale neighborhood} of $Z$ in $X$ is a pair of morphisms $(p\colon U\to X,\, r\colon Z\to U)$ where $p$ is \etale and $p\circ r=i$ for the closed immersion $i\colon Z\to X$.
\[
\xymatrix{
 & & U \ar[dll]_{\et}^p \\
X & & Y \ar[ll]^{i} \ar[u]_r
}
\]
\end{definition}

\begin{definition}
Let $S$ be a scheme, $X,Y$ be schemes over $S$ and $\relcurve$ be a scheme over $S$ of relative dimension $d$. An \textit{explicit $\relcurve$-inner framed correspondence} consists of the following data:
\begin{enumerate}
\item
a closed subscheme $Z$ of $X\times_S \relcurve$ which is finite over $X$;
\item
an \etale neighborhood $(p\colon U\to X\times_S \relcurve,\, r\colon Z\to U)$ of $Z$ in $X\times_S \relcurve$;
\item
a collection of regular functions $\phi=(\phi_1,\phi_2,\hdots,\phi_d)$ on $U$ such that $r(Z)^{\red}=Z(\phi)^{\red}$ where $Z(\phi)$ stands for the common zero locus of $\phi_i$-s;
\item
a morphism of $S$-schemes $g\colon U\to Y$ with the structure morphism $U\to S$ given by the composition $U\to X\times_S \relcurve\to X\to S$.
\end{enumerate}
\[
\xymatrix{
 & & U \ar[lld]_{\et}^(0.3)p \ar[rr]^\phi \ar[ddrr]^(0.3)g & & \A^d \\
X\times_S \relcurve \ar[drr] & & Z \ar[d]^{\fin} \pour \ar[ll]_(0.3){\cl} \ar[u]^r \ar[rr] & & \{0\} \ar[u] \\
 &  & X \ar[dr] & & Y \ar[dl] \\
 &  & & S &  
}
\]
We usually write an explicit framed correspondence as 
\[
(Z,U,\phi,g)=(Z,(p\colon U\to X\times_S \relcurve,\, r\colon Z\to U),\phi,g).
\]

Two explicit $\relcurve$-inner framed correspondences $(Z,U,\phi,g)$ and $(Z',U',\phi',g')$ are said to be equivalent if $Z^{\red}={Z'}^{\red}$ and there exists an \etale neighborhood $W$ of $Z$ in $U\times_{X\times_S \relcurve} U'$ such that $g\circ \pi_U=g'\circ \pi_{U'}$ and $\phi\circ \pi_U=\phi'\circ \pi_{U'}$ for the respective projections $\pi_U\colon W\to U$ and $\pi_{U'}\colon W\to U'$. The set of $\relcurve$-inner framed correspondences (i.e. explicit $\relcurve$-inner framed correspondences up to the above equivalence) is denoted $\Fr_\relcurve^S(X,Y)$. Set
\[
\ZF^S_\relcurve(X,Y)=\Z[\Fr^S_\relcurve(X,Y)]/A,
\]
where $\Z[\Fr^S_\relcurve(X,Y)]$ is the free abelian group on the set of $\relcurve$-inner framed correspondences and $A$ is the subgroup generated by the elements
\[
\la Z\sqcup Z', U, \phi, g\ra - \la Z, U-Z', \phi|_{U-Z'}, g|_{U-Z'}\ra - \la Z', U-Z, \phi|_{U-Z}, g|_{U-Z}\ra.
\]
For $a\in \Fr^S_\relcurve(X,Y)$ we denote the corresponding element $1\cdot a$ in $\ZF^S_\relcurve(X,Y)$ by $\la a\ra$.

Functoriality of $\Fr_\relcurve^S(X,Y)$ with respect to the morphisms of $S$-schemes gives rise to presheaves $\Fr_\relcurve^S(-,Y)$ and $\ZF_\relcurve^S(-,Y)$ on the category $\Sch_S$. 

An open immersion $q\colon \relcurve'\to \relcurve$ gives rise to a map
\begin{gather*}
\Fr_{\relcurve'}^S(X,Y) \to \Fr_\relcurve^S(X,Y),\\
(Z,p\colon U \to X\times_S \relcurve',\phi,g)\mapsto ((\id_X\times q)(Z),(\id_X\times q)\circ p\colon U \to X\times_S \relcurve,\phi,g).
\end{gather*}
This rule induces morphisms of presheaves
\[
\Fr_{\relcurve'}^S(-,Y)\to \Fr_\relcurve^S(-,Y),\quad \ZF_{\relcurve'}^S(-,Y)\to \ZF_\relcurve^S(-,Y).
\]
\end{definition}

\begin{definition}
Let $S$ be a scheme and $X,Y$ be schemes over $S$. An \textit{(explicit) framed correspondence of level $n$} is an (explicit) $\A^n_S$-inner framed correspondence. We denote
\[
\Fr_n^S(X,Y)=\Fr_{\A^n_S}^S(X,Y).
\]
Note that $\Fr^S_0(X,Y)=\Hom_{\Sch_{S},\bullet}(X_+,Y_+)$ is the set of the morphisms of pointed $S$-schemes. For a morphism $f\in \Hom_{\Sch_S}(X,Y)$ we usually denote by the same letter the corresponding element of $\Fr^S_0(X,Y)$. Denote
\[
\Fr^S_*(X,Y)=\bigvee_{n\ge 0} \Fr^S_n(X,Y),\quad \ZF^S_*(X,Y)=\bigoplus_{n\ge 0} \ZF^S_n(X,Y).
\]

Let $X,Y$ and $V$ be schemes over $S$ and let $\Phi=(Z,U,\phi,g)\in \Fr^S_n(X,Y)$ and $\Psi=(Z',W,\psi,h)\in \Fr^S_m(Y,V)$ be explicit framed correspondences. Then we compose them in the following way (see the details in \cite{GP14}):
\[
\Psi \circ \Phi = (Z\times_Y Z',U\times_Y W, (\phi\circ \pi_{U}, \psi\circ \pi_W), h\circ \pi_W).
\]
One can show that this rule induces associative compositions
\[
\Fr^S_n(X,Y)\times \Fr^S_m(Y,V)\to \Fr^S_{n+m}(X,V),\quad \ZF^S_n(X,Y)\times \ZF^S_m(Y,V)\to \ZF^S_{n+m}(X,V).
\] 
We denote $\Fr_*(S)$ and $\ZF_*(S)$ the categories with the objects being smooth schemes over $S$ and morphisms given by $\Fr^S_*(-,-)$ and $\ZF^S_*(-,-)$ respectively. There is a obvious functor $\Fr_*(S)\to \ZF_*(S)$.
\end{definition}

\begin{remark}
When the base scheme $S$ is clear from the context we will usually omit the superscript $S$ and write 
\begin{gather*}
\Fr_\relcurve(X,Y)=\Fr^S_\relcurve(X,Y),\quad \ZF_\relcurve(X,Y)=\ZF^S_\relcurve(X,Y),\\
\Fr_*(X,Y)=\Fr^S_*(X,Y),\quad \ZF_*(X,Y)=\ZF^S_*(X,Y).
\end{gather*}
\end{remark}

\begin{definition}
For a scheme $Y$ fix the notation for the following framed correspondences.
\begin{align*}
&\sigma_Y=(Y\times \{0\},Y\times \A^1, x,\pi_Y)\in \Fr_1(Y,Y),\\
&h_Y=\la Y\times \{0\},Y\times \A^1, x,\pi_Y\ra +\la Y\times \{0\},Y\times \A^1, -x,\pi_Y\ra\in \ZF_1(Y,Y).
\end{align*}
Here $x$ is the coordinate function on $\A^1$ and $\pi_Y\colon Y\times\A^1\to Y$ is the projection. For $m\ge 1$ the $m$-fold composition of $\sigma_Y$ is denoted $\sigma^m_Y\in \Fr_m(Y,Y)$.
%
\end{definition}

\begin{definition} \label{def:normal_framing}
Let $S$ be a scheme and $\relcurve$ be a scheme over $S$ of relative dimension $d$. A \textit{level $m$ normal framing of $\relcurve$} consists of the following data:
\begin{enumerate}
\item
an open immersion $j\colon W\to \A^{d+m}_S$;
\item
a closed immersion $i\colon \relcurve \to W$;
\item
an \etale neighborhood $(p\colon \widetilde{W}\to W, r\colon \relcurve \to \widetilde{W})$ of $\relcurve$ in $W$;
\item
a collection of regular functions $\psi=(\psi_1,\psi_2,\hdots,\psi_m)$ on $\widetilde{W}$ such that $r(\relcurve)=Z(\psi)$ where $Z(\psi)$ stands for the common zero locus of $\psi_i$-s;
\item
a regular morphism $\rho\colon \widetilde{W}\to \relcurve$ such that $\rho\circ r=\id_\relcurve$.
\end{enumerate}
\[
\xymatrix{
& & & \widetilde{W} \ar[dll]^(0.3)p_\et \ar[rrdd]^(0.3)\rho \ar[rr]^\psi & & \A^{m} \\
\A^{d+m}_S & W \ar[l]^(0.4)j_(0.4){\op} & & \relcurve \ar[drr]_{\id_\relcurve} \ar[ll]^(0.4)i_(0.4){\cl} \ar[u]^r \ar[rr] \pour & & \{0\} \ar[u] \\
& & & & & \relcurve
}
\]
The set of level $m$ normal framings of $\relcurve$ is denoted $\lF_m(\relcurve)$. An open immersion $\relcurve'\subset \relcurve$ induces a map $\lF_m(\relcurve)\to \lF_m(\relcurve')$ given by
\[
(j\colon W\to \A^{d+m}_S,i\colon \relcurve\to W,p\colon \widetilde{W}\to W,\psi,\rho) \mapsto (j'\colon W'\to \A^{d+m}_S,i'\colon \relcurve'\to W',p'\colon \widetilde{W}'\to W',\psi',\rho')
\]
with $W'=W-i(\relcurve-\relcurve'),\,\widetilde{W}'=\widetilde{W}-\rho^{-1}(\relcurve-\relcurve')-p^{-1}(i(\relcurve-\relcurve'))$ and the morphisms being the restrictions of the corresponding morphisms.
\end{definition}

\begin{definition}
Let $S$ be a scheme, $X$ and $Y$ be schemes over $S$ and $\relcurve$ be a scheme over $S$ of relative dimension $d$. For a level $m$ normal framing of $\relcurve$ and an explicit $\relcurve$-inner framed correspondence from $X$ to $Y$ set
\begin{multline*}
(j\colon W\to \A^{d+m}_S,i\colon \relcurve\to W,q\colon \widetilde{W}\to W,\psi,\rho) \frp (Z,p\colon U\to X\times_S \relcurve,\phi,g)= \\
=((\id_X\times (j\circ i)) (Z), (\pi_X \circ p\circ \pi_U, j\circ q\circ \pi_{\widetilde{W}})\colon U\times_\relcurve \widetilde{W} \to X\times_S \A^{d+m}_S,(\phi\circ \pi_U, \psi\circ \pi_{\widetilde{W}}),g\circ \pi_U).
\end{multline*}
\[
\xymatrix @C=3pc {
& & U \times_\relcurve \widetilde{W} \podr \ar[r]_(0.6){\pi_U} \ar[d] \ar[ddll]_(0.4){\et} \ar@/^1.0pc/[rrr]_(0.7){\phi\times\psi}  & U \ar[d]_p \ar@/^1.0pc/[dddrr]^g & & \A^{d+m}\\
& & X\times_S \widetilde{W} \ar[r]^{\id \times \rho} \ar[dl]_{\et}^(0.4){\id \times q} &  X \times_S  \relcurve & & \\
X \times_S \A^{d+m}_S \ar@/_1.0pc/[drrr] & X \times_S W \ar[l]^(0.45){\id\times j}_(0.45){\op}  &  X \times_S \relcurve \ar[dr] \ar[l]_(0.4){\cl}^(0.4){\id \times i} \ar[ur]^{\id} \ar[u] & & Z\ar[ll]_{\cl}\ar[ul]^{\cl} \ar[uul]\ar[r]\ar[dl]^{\fin} & \{0\} \ar[uu] \\
& & & X \ar[dr] & & Y\ar[dl] \\
& & & & S &  \\   
}
\]

Here the fibered product $ U \times_{\relcurve} \widetilde{W}\cong  U \times_{X\times_S \relcurve} (X\times_S \widetilde{W})$ is taken with respect to $p$ and $\rho$, morphisms $\pi_{\widetilde{W}}\colon U \times_{\relcurve} \widetilde{W}\to \widetilde{W}$, $\pi_U\colon U \times_{\relcurve} \widetilde{W}\to U$ and $\pi_X\colon X \times_S \relcurve\to X$ are the projections.

This rule gives rise to the pairings
\[
\frp\colon \lF_m(\relcurve)\times \Fr_\relcurve(X,Y) \to \Fr_{d+m}(X,Y),\quad \frp\colon \lF_m(\relcurve)\times \ZF_\relcurve(X,Y) \to \ZF_{d+m}(X,Y)
\]
inducing morphisms of sheaves
\[
\frp\colon \lF_m(\relcurve)\times \Fr_\relcurve(-,Y) \to \Fr_{d+m}(-,Y),\quad \frp\colon \lF_m(\relcurve)\times \ZF_\relcurve(-,Y) \to \ZF_{d+m}(-,Y).
\]
\end{definition}

\section{Framed homotopies}
Most of the results of this section are not original and have already appeared in the literature (see \cite{AGP16,GNP16,GP14,GP15}) in slightly different incarnations. For the sake of completeness we give the proofs for the statements in the precise forms that we are going to use.

\begin{definition}
Let $S$ be a scheme, $X,Y$ be schemes over $S$ and $\relcurve$ be a scheme over $S$ of relative dimension $d$. We say that $a,b\in \Fr_\relcurve(X,Y)$ (resp. $\ZF_\relcurve(X,Y)$) are \textit{$\A^1$-homotopic} and denote it $a \simA b$ if there exists a sequence of elements $H_1,H_2,\hdots H_n\in \Fr_\relcurve(\A^1 \times X,Y)$ (resp. $\ZF_\relcurve(\A^1 \times X,Y)$) such that 
\begin{enumerate}
\item
$H_1 \circ i_0=a,\, H_n \circ i_1=b$;
\item
$H_l \circ i_0=H_{l-1} \circ i_1,\, l\ge 2$.
\end{enumerate}
Here $i_0,i_1\colon X\to \A^1\times X$ are the closed immersions given by $\{0\}\times X$ and $\{1\}\times X$ respectively.
\end{definition}

\begin{remark}
For $a,b\in \ZF_\relcurve(X,Y)$ one has $a\simA b$ iff there exists $H\in \ZF_\relcurve(\A^1\times X,Y)$ such that $H\circ i_0=a$ and $H\circ i_1=b$: take 
\[
H=H_1+H_2+\hdots + H_n - c_1 -c_2 - \hdots c_{n-1}
\]
where $c_i= H_{i}\circ i_1 \circ p$ are the constant homotopies with $p\colon \A^1\times X \to X$ being the projection.
\end{remark}

\begin{definition}
Let $S$ be a scheme, $X,Y$ be schemes over $S$ and $\relcurve$ be a scheme over $S$ of relative dimension $d$. For an explicit framed correspondence $(Z,U,\phi,g)\in\Fr_\relcurve(X,Y)$ and $A\in \GL_d(\Gamma(U,\struct_U))$ set
\[
(Z,U,\phi,g) \cdot A = (Z,U,\phi \cdot A,g),
\]
where $\cdot$ on the right stands for the matrix multiplication.

$A\in \GL_d(\Gamma(U,\struct_U))$ is called \textit{elementary} if $A$ is a product of elementary transvections,
\[
A=T_{i_1j_1}(a_1)T_{i_2j_2}(a_2)\hdots T_{i_mj_m}(a_m),
\]
with $a_l\in \Gamma(U,\struct_U)$ for all $l$. Here $i_l\neq j_l$ and $T_{i_lj_l}(a_l)$ differs from the unit matrix only at the position $(i_l,j_l)$ where stands $a_l$. Recall that every matrix of determinant $1$ over a field, a local ring, or over $\Z$ is elementary.
\end{definition}

\begin{lemma}\label{lm:basic_homotopies}
Let $S$ be a scheme, $X,Y$ be schemes over $S$ and $\relcurve$ be a scheme over $S$ of relative dimension $d$. Consider an explicit framed correspondence 
\[
(Z,U,\phi,g)=(Z,U,(\phi_1,\hdots,\phi_d),g)\in \Fr_\relcurve(X,Y).
\]
Then
\begin{enumerate}
\item  \label{lm:basic_homotopies:elementary}
$(Z,U,\phi,g) \simA (Z,U,\phi,g)\cdot A$ for elementary $A\in \GL_d(\Gamma(U,\struct_U))$;

\item \label{lm:basic_homotopies:derivative}
$(Z,U,\phi,g) \simA (Z,U,(\phi_1,\hdots,\phi_{d-1},\alpha\phi_d),g)$ for $\alpha\in \Gamma(U,\struct_U^{\ast})$ satisfying $\alpha|_Z=1$.
\end{enumerate}
\end{lemma}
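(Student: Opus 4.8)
Both statements will be proved by writing down an explicit $\A^1$-family of framed correspondences, the only delicate point being that the support of such a family must stay finite over $\A^1\times X$. I will use freely that $\simA$ is reflexive and transitive (transitivity is built into the definition, by concatenating homotopy sequences). For part~\eqref{lm:basic_homotopies:elementary} it then suffices to treat a single elementary transvection $A=T_{ij}(a)$ with $i\neq j$ and $a\in\Gamma(U,\struct_U)$, since a general elementary $A=T_{i_1j_1}(a_1)\cdots T_{i_rj_r}(a_r)$ is handled by chaining the homotopies coming from the successive factors applied to $\phi,\ \phi\cdot T_{i_1j_1}(a_1),\ \phi\cdot T_{i_1j_1}(a_1)T_{i_2j_2}(a_2),\dots$, all of which have the same reduced zero locus $Z^{\red}$. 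For one transvection, $(Z,U,\phi,g)\cdot A$ differs from $(Z,U,\phi,g)$ only in that one framing function $\phi_j$ is replaced by $\phi_j+a\phi_i$, and I would take
\[
H=\bigl(\A^1\times Z,\ \A^1\times U,\ (\phi_1,\dots,\phi_{j-1},\,\phi_j+ta\phi_i,\,\phi_{j+1},\dots,\phi_d),\ g\circ\pi_U\bigr),
\]
with $t$ the coordinate on $\A^1$, étale neighborhood $\id_{\A^1}\times p$, and $\pi_U\colon\A^1\times U\to U$ the projection. Its support $\A^1\times Z$ is finite over $\A^1\times X$, and the reduced zero locus of the framing is unchanged, because on the common zero locus of $\{\phi_k:k\neq j\}$ one has $\phi_i=0$ (as $i\neq j$), so $\phi_j+ta\phi_i=\phi_j$ there. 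Specializing $t$ to $0$ and to $1$ yields $H\circ i_0=(Z,U,\phi,g)$ and $H\circ i_1=(Z,U,\phi,g)\cdot A$.

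Part~\eqref{lm:basic_homotopies:derivative} is where the real difficulty sits. The naive candidate $\bigl(\A^1\times Z,\ \A^1\times U,\ (\phi_1,\dots,\phi_{d-1},(1+t(\alpha-1))\phi_d),\ g\circ\pi_U\bigr)$ does \emph{not} work: the function $(1+t(\alpha-1))\phi_d$ acquires spurious zeros along $\{1+t(\alpha-1)=0\}\cap Z(\phi_1,\dots,\phi_{d-1})$, and this locus need not be finite over $\A^1\times X$. The remedy is simply to delete it. Let $W\subseteq\A^1\times U$ be the open complement of the vanishing locus of $1+t(\alpha-1)$, and set
\[
H=\bigl(\A^1\times Z,\ W,\ (\phi_1,\dots,\phi_{d-1},(1+t(\alpha-1))\phi_d)|_W,\ (g\circ\pi_U)|_W\bigr).
\]
The two hypotheses enter exactly here. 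Since $\alpha|_Z=1$, the function $1+t(\alpha-1)$ is identically $1$ on $\A^1\times r(Z)$, so $\A^1\times r(Z)\subseteq W$; hence $H$ is a well-defined explicit $\relcurve$-inner framed correspondence with support $\A^1\times Z$ finite over $\A^1\times X$, because on $W$ the factor $1+t(\alpha-1)$ is a unit and contributes no zeros, so the reduced zero locus of the framing is again that of $\A^1\times Z$. Specializing to $t=0$ gives $W|_{t=0}=\{0\}\times U$ (as $1+0=1$) with framing $(\phi_1,\dots,\phi_d)$, i.e.\ $H\circ i_0=(Z,U,\phi,g)$; specializing to $t=1$ gives $W|_{t=1}=\{1\}\times U$ — this time because $1+(\alpha-1)=\alpha$ is a unit on $U$ — with framing $(\phi_1,\dots,\phi_{d-1},\alpha\phi_d)$, i.e.\ $H\circ i_1=(Z,U,(\phi_1,\dots,\phi_{d-1},\alpha\phi_d),g)$.

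The main (essentially the only) obstacle is thus the finiteness of the support over the base in part~\eqref{lm:basic_homotopies:derivative}; once one chooses the non-vanishing locus of $1+t(\alpha-1)$ rather than all of $\A^1\times U$ as the ambient scheme of the homotopy, the rest is a routine unwinding of the definitions — compatibility of the étale neighborhoods with the base changes along $i_0,i_1$, the structure morphisms to $Y$, and the identification of reduced zero loci.
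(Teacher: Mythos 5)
Your proof is correct and takes essentially the same approach as the paper: for (\ref{lm:basic_homotopies:elementary}) the paper likewise reduces to a single transvection and uses the homotopy $\phi\cdot T_{ij}(ta)$ over $\A^1\times U$, and for (\ref{lm:basic_homotopies:derivative}) it uses the same interpolating unit $t\alpha+(1-t)$ and removes its vanishing locus from $\A^1\times U$, which is exactly your open subscheme $W$. Your identification of the support-finiteness/spurious-zero issue as the only delicate point matches the paper's treatment.
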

\begin{proof}
Throughout the proof we denote $t$ the parameter of the considered homotopy (i.e. the coordinate on $\A^1$) and $\pi_U$ the projections onto $U$.

(\ref{lm:basic_homotopies:elementary}) It is sufficient to consider the case of an elementary transvection $A=T_{ij}(a)$. The homotopy is given by
\[
H= (\A^1\times Z,\A^1 \times U,  \phi \cdot T_{ij}(ta),g\circ\pi_U).
\]

(\ref{lm:basic_homotopies:derivative}) Consider a regular function $t\alpha+(1-t)$ on $\A^1 \times U$. It follows from the assumption that $Z(t\alpha+(1-t))\cap (\A^1 \times Z)=\emptyset$. The homotopy is given by
\[
H= (\A^1 \times Z,(\A^1 \times U)-Z(t\alpha+(1-t)),  (\phi_1,\hdots, \phi_{d-1}, (t\alpha+(1-t))\phi_d),g\circ\pi_U).
\]
\end{proof}

\begin{lemma}\label{lm:basic_homotopies2}
Let $S$ be a scheme, $X,Y$ be schemes over $S$ and $d\in \mathbb{N}$. Consider an explicit framed correspondence 
\[
(Z,U,\phi,g)=(Z,U,(\phi_1,\hdots,\phi_d),g)\in \Fr_d(X,Y).
\]
Then
\begin{enumerate}
\item \label{lm:basic_homotopies2:shuffle}
$\sigma_Y^2 \circ (Z,U,\phi,g) \simA (Z,U,\phi,g) \circ \sigma_X^2$ and $h_Y \circ \la Z,U,\phi,g \ra \simA \la Z,U,\phi,g \ra \circ h_X$;
	
\item \label{lm:basic_homotopies2:hyper}
$\la\sigma_Y\ra \circ \left(\la Z,U,(\phi_1,\hdots,\phi_{d-1},\phi_d),g \ra + \la Z,U,(\phi_1,\hdots,\phi_{d-1}, -\phi_{d}),g\ra \right) \simA  h_Y\circ \la Z,U,\phi,g\ra$.
\end{enumerate}
\end{lemma}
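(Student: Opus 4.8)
The plan is to unwind the definition of composition on both sides and reduce everything to Lemma~\ref{lm:basic_homotopies}(\ref{lm:basic_homotopies:elementary}) together with two facts about integral matrices: the permutation matrix transposing a coordinate block of size $a$ with one of size $b$ has determinant $(-1)^{ab}$, and $\mathrm{diag}(-1,-1)$ lies in $\SL_2(\Z)$; in either case the matrix is a product of elementary transvections over $\Z$, hence over $\Gamma(U,\struct_U)$. Write $\Phi=(Z,U,\phi,g)$, let $t$ be the homotopy parameter, and put $\sigma_Y^-=(Y\times\{0\},Y\times\A^1,-x,\pi_Y)$, so that $h_Y=\la\sigma_Y\ra+\la\sigma_Y^-\ra$ and likewise $h_X=\la\sigma_X\ra+\la\sigma_X^-\ra$. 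The only device needed beyond Lemma~\ref{lm:basic_homotopies} is a \emph{coordinate-change homotopy}: for an explicit correspondence $\Psi_0=(Z_0,U_0,\psi_0,g_0)\in\Fr_n(X,Y)$ and $M\in\SL_n(\Z)$, write $M=\prod_k T_{i_kj_k}(c_k)$ and set $A(t)=\prod_k T_{i_kj_k}(tc_k)\in\SL_n(\Z[t])$; applying the linear change of variables $A(t)$ to the ambient $\A^n_S$ — thereby transporting $Z_0$ and $U_0$ and transforming $\psi_0$ accordingly — produces an element of $\Fr_n(\A^1\times X,Y)$ which is an $\A^1$-homotopy between $\Psi_0$ (at $t=0$) and its $M$-transform (at $t=1$). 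This is a one-parameter version of the computation proving Lemma~\ref{lm:basic_homotopies}(\ref{lm:basic_homotopies:elementary}), now carried out for a product of transvections acting simultaneously on the support, the \'etale neighborhood and the framing.

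\textbf{Statement (\ref{lm:basic_homotopies2:hyper}).} Unwinding composition, with $y$ the coordinate on the new $\A^1$ and $Z\times\{0\}$ occupying the first $d$ of the $d+1$ coordinates, the right side equals $\la Z\times\{0\},U\times\A^1,(\phi_1,\dots,\phi_d,y),g\ra+\la Z\times\{0\},U\times\A^1,(\phi_1,\dots,\phi_d,-y),g\ra$ and the left side equals $\la Z\times\{0\},U\times\A^1,(\phi_1,\dots,\phi_d,y),g\ra+\la Z\times\{0\},U\times\A^1,(\phi_1,\dots,\phi_{d-1},-\phi_d,y),g\ra$. The first summands coincide, and the second summands are $\A^1$-homotopic by Lemma~\ref{lm:basic_homotopies}(\ref{lm:basic_homotopies:elementary}) applied with $A=\mathrm{diag}(1,\dots,1,-1,-1)\in\SL_{d+1}$, which is elementary over $\Z$ because $\mathrm{diag}(-1,-1)\in\SL_2(\Z)$, and which satisfies $(\phi_1,\dots,\phi_d,-y)\cdot A=(\phi_1,\dots,\phi_{d-1},-\phi_d,y)$; adding the constant homotopy on the common summand finishes the proof.

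\textbf{Statement (\ref{lm:basic_homotopies2:shuffle}).} By additivity of composition, $h_Y\circ\la\Phi\ra=\la\sigma_Y\circ\Phi\ra+\la\sigma_Y^-\circ\Phi\ra$ and $\la\Phi\ra\circ h_X=\la\Phi\circ\sigma_X\ra+\la\Phi\circ\sigma_X^-\ra$, where $\sigma_Y\circ\Phi$ has support $Z$ in the first $d$ of $d+1$ coordinates (last coordinate $0$) and framing $(\phi_1,\dots,\phi_d,x)$, while $\Phi\circ\sigma_X$ has support $Z$ in the last $d$ coordinates (first coordinate $0$) and framing $(x,\phi_1,\dots,\phi_d)$. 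A coordinate-change homotopy along the block transposition of the $d$-block and the $1$-block — of determinant $(-1)^d$, hence composed with a sign change of the single moved coordinate (legitimate, since that coordinate vanishes on the support) when $d$ is odd so as to lie in $\SL_{d+1}(\Z)$ — carries $Z$ into the last $d$ coordinates and turns the framing into $(\phi_1,\dots,\phi_d,\pm x)$, and one more application of Lemma~\ref{lm:basic_homotopies}(\ref{lm:basic_homotopies:elementary}) reordering the framing list (again corrected by a sign change, which then cancels the previous one, when $d$ is odd) produces exactly $\Phi\circ\sigma_X$; the same steps give $\sigma_Y^-\circ\Phi\simA\Phi\circ\sigma_X^-$, whence $h_Y\circ\la\Phi\ra\simA\la\Phi\ra\circ h_X$. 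The first identity is the same argument with one extra coordinate and no determinant obstruction: $\sigma_Y^2\circ\Phi$ has support $Z$ in the first $d$ of $d+2$ coordinates and framing $(\phi_1,\dots,\phi_d,x_1,x_2)$; a coordinate-change homotopy along the block transposition of the $d$-block and the $2$-block (of determinant $(-1)^{2d}=1$) moves $Z$ into the last $d$ coordinates, and Lemma~\ref{lm:basic_homotopies}(\ref{lm:basic_homotopies:elementary}) with the analogous list transposition (also of determinant $1$) reorders the framing to $(x_1,x_2,\phi_1,\dots,\phi_d)$, which is the data of $\Phi\circ\sigma_X^2$.

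\textbf{Main obstacle.} The algebra above is routine; the point requiring care is the coordinate-change homotopy, i.e.\ checking that transporting along the linear path $A(t)=\prod_k T_{i_kj_k}(tc_k)$ genuinely produces an \emph{explicit} framed correspondence over $\A^1\times X$: that the transported support is a closed subscheme of $\A^1\times X\times_S\A^n_S$ finite over $\A^1\times X$ — it is the image of $\A^1\times Z_0$ under an automorphism of $\A^1\times\A^n_S$, since $A(t)$ and $A(t)^{-1}$ have polynomial entries in $t$ — that the transported \'etale neighborhood is still an \'etale neighborhood of it, that the common zero locus of the transformed framing equals the transported support, and that specializing at $t=0$ and $t=1$ recovers exactly the two explicit correspondences produced by the composition formula. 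Everything else is bookkeeping with the fiber products in the definition of composition and with the \'etale neighborhoods.
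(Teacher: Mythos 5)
Your proof is correct and follows the paper's route: part (\ref{lm:basic_homotopies2:hyper}) is essentially verbatim the paper's argument (unwind the two compositions, note the common summand, and apply Lemma~\ref{lm:basic_homotopies}(\ref{lm:basic_homotopies:elementary}) with the elementary matrix $\mathrm{diag}(1,\dots,1,-1,-1)\in\SL_{d+1}(\Z)$), while part (\ref{lm:basic_homotopies2:shuffle}) is what the paper dismisses in one line as ``immediately follows\dots applying permutation matrices.'' Your treatment of part (\ref{lm:basic_homotopies2:shuffle}) is in fact more careful than the paper's: you correctly observe that $\sigma_Y^2\circ\Phi$ and $\Phi\circ\sigma_X^2$ have \emph{different supports} inside $X\times_S\A^{d+2}_S$, so Lemma~\ref{lm:basic_homotopies}(\ref{lm:basic_homotopies:elementary}) alone (which fixes $Z$, $U$, $g$ and only rewrites the framing) does not suffice, and your one-parameter coordinate-change homotopy along a product of transvections --- together with the determinant/sign bookkeeping for odd $d$ in the $h$-statement --- supplies exactly the missing ambient move, by the same device the paper itself uses for the translation homotopy in the proof of Lemma~\ref{lm:suspension}.
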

\begin{proof}
(\ref{lm:basic_homotopies2:shuffle}) Immediately follows from Lemma~\ref{lm:basic_homotopies}(\ref{lm:basic_homotopies:elementary}) applying permutation matrices.

(\ref{lm:basic_homotopies2:hyper}) We have
\begin{gather*}
\la\sigma_Y\ra \circ \la Z,U,(\phi_1,\hdots,\phi_{d-1},\phi_d),g \ra = \la Z\times\{0\},U\times \A^1,(\phi_1,\hdots,\phi_{d-1},\phi_d, x),g \ra,\\
\la\sigma_Y\ra \circ \la Z,U,(\phi_1,\hdots,\phi_{d-1},-\phi_d),g \ra = \la Z\times\{0\},U\times \A^1,(\phi_1,\hdots,\phi_{d-1},-\phi_d, x),g \ra.
\end{gather*}
Applying Lemma~\ref{lm:basic_homotopies}(\ref{lm:basic_homotopies:elementary}) to the elementary matrix $\mathrm{diag}(1,1,\hdots,1,-1,-1)$ we obtain
\[
\la Z\times\{0\},U\times \A^1,(\phi_1,\hdots,\phi_{d-1},-\phi_d, x),g \ra \simA \la Z\times\{0\},U\times \A^1,(\phi_1,\hdots,\phi_{d-1},\phi_d, -x),g \ra,
\]
whence the claim.	
\end{proof}

\begin{lemma} \label{lm:hyperbolic}
Let $S$ be a scheme, $X,Y$ be schemes over $S$ and $\relcurve$ be a scheme over $S$ of relative dimension $1$. Let $(Z,U,\phi_1,g)\in \Fr_\relcurve(X,Y)$ be an explicit framed correspondence such that $(\phi_1, \pi_X)\colon U\to \A^1\times X$ is finite for the composition $\pi_X\colon U\to X\times_S \relcurve\to X$. Then for every $\alpha\in\Gamma(U,\struct_U^*)$ one has
\begin{enumerate}
\item
$\la Z,U,\alpha\phi_1^{2n},g \ra \simA n\left(\la Z,U,\alpha\phi_1,g\ra + \la Z,U,-\alpha\phi_1,g \ra \right) \in \ZF_\relcurve(X,Y)$;
\item
$\la Z,U,\alpha\phi_1^{2n+1},g \ra \simA \la Z,U,\alpha\phi_1,g\ra + n\left(\la Z,U,\alpha\phi_1,g\ra + \la Z,U,-\alpha\phi_1,g \ra \right) \in \ZF_\relcurve(X,Y)$.
\end{enumerate}
Moreover, for every normal framing $\Phi\in \lF_{m-2}(\relcurve)$ one has
\begin{enumerate}
\item
$\sigma_Y \circ (\Phi \frp \la Z,U,\alpha\phi_1^{2n},g \ra) \simA n h_Y \circ (\Phi\frp \la Z,U,\alpha\phi_1,g \ra) \in \ZF_{m}(X,Y)$;
\item
$\sigma_Y \circ (\Phi \frp \la Z,U,\alpha\phi_1^{2n+1},g \ra) \simA \sigma_Y \circ (\Phi\frp \la Z,U,\alpha\phi_1,g \ra) + n h_Y \circ (\Phi\frp \la Z,U,\alpha\phi_1,g \ra) \in \ZF_{m}(X,Y)$.
\end{enumerate}
\end{lemma}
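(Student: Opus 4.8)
The plan is to establish the two identities in $\ZF_\relcurve(X,Y)$ first, and then to derive the ``moreover'' part by pairing with the normal framing and composing with $\sigma_Y$. Everything in the first half is reduced to one \emph{reduction homotopy}: for every $k\ge 2$,
\[
\la Z,U,\alpha\phi_1^{k},g\ra\ \simA\ \la Z,U,-\alpha\phi_1^{k-1},g\ra+\la Z,U,\alpha\phi_1,g\ra\ \in\ \ZF_\relcurve(X,Y).
\]
Granting this, set $a_k=\la Z,U,\alpha\phi_1^{k},g\ra$ and $b_k=\la Z,U,-\alpha\phi_1^{k},g\ra$. The reduction reads $a_k\simA b_{k-1}+a_1$, and applied with $-\alpha$ in place of $\alpha$ it reads $b_k\simA a_{k-1}+b_1$. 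Combining the two one gets $a_k\simA a_{k-2}+(a_1+b_1)$ and $b_k\simA b_{k-2}+(a_1+b_1)$ for $k\ge 3$, while $a_2\simA a_1+b_1$; iterating gives $a_{2n}\simA n(a_1+b_1)$ and $a_{2n+1}\simA a_1+n(a_1+b_1)$, which are precisely the two asserted formulas because $a_1+b_1=\la Z,U,\alpha\phi_1,g\ra+\la Z,U,-\alpha\phi_1,g\ra$.

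To build the reduction homotopy I would use on $\A^1\times U$ the function $\alpha\phi_1^{k-1}(\phi_1-t)=\alpha\phi_1^{k}-t\,\alpha\phi_1^{k-1}$, with $t$ the homotopy parameter. Its reduced zero locus is $Z(\phi_1)^{\red}\cup Z(\phi_1-t)^{\red}$; the first component is $\A^1\times Z$, finite over $\A^1\times X$, and the second is the pullback of the finite morphism $(\phi_1,\pi_X)\colon U\to\A^1\times X$ along $(t,x)\mapsto(t,x)$, hence again finite over $\A^1\times X$. Thus (on a suitable \'etale neighbourhood of this locus) $H=\bigl(Z(\phi_1^{k-1}(\phi_1-t))^{\red},\A^1\times U,\alpha\phi_1^{k-1}(\phi_1-t),g\circ\pi_U\bigr)$ is a homotopy in $\ZF_\relcurve(\A^1\times X,Y)$; at $t=0$ it is $a_k$, and at $t=1$ its support is $Z(\phi_1)^{\red}\sqcup Z(\phi_1-1)^{\red}$, so by the additivity relation it splits into two summands. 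On the summand supported on $Z(\phi_1)^{\red}$ write $\alpha\phi_1^{k-1}(\phi_1-1)=(1-\phi_1)\cdot(-\alpha\phi_1^{k-1})$, with $1-\phi_1$ a unit away from $Z(\phi_1-1)$ equal to $1$ on $Z(\phi_1)$, so Lemma~\ref{lm:basic_homotopies}(\ref{lm:basic_homotopies:derivative}) identifies it with $b_{k-1}$. On the summand supported on $Z(\phi_1-1)^{\red}$ write $\alpha\phi_1^{k-1}(\phi_1-1)=\phi_1^{k-1}\cdot\alpha(\phi_1-1)$, with $\phi_1^{k-1}$ a unit away from $Z(\phi_1)$ equal to $1$ on $Z(\phi_1-1)$, so the same lemma reduces it to $\la Z(\phi_1-1)^{\red},U,\alpha(\phi_1-1),g\ra$; finally the translation homotopy $\bigl(Z(\phi_1-(1-t))^{\red},\A^1\times U,\alpha(\phi_1-(1-t)),g\circ\pi_U\bigr)$ — again finite over $\A^1\times X$ by finiteness of $(\phi_1,\pi_X)$ — moves the latter to $a_1$. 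This gives the reduction homotopy.

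For the ``moreover'' part, fix $\Phi\in\lF_{m-2}(\relcurve)$. Since $\Phi\frp(-)\colon\ZF_\relcurve(-,Y)\to\ZF_{m-1}(-,Y)$ is a morphism of presheaves it carries $\A^1$-homotopies to $\A^1$-homotopies, and post-composition with $\sigma_Y$ commutes with the restrictions $i_0^*,i_1^*$ by associativity of composition; so applying $\Phi\frp(-)$ and then $\sigma_Y\circ(-)$ to the identities already proved yields
\begin{gather*}
\sigma_Y\circ(\Phi\frp a_{2n})\ \simA\ n\,\la\sigma_Y\ra\circ\bigl((\Phi\frp a_1)+(\Phi\frp b_1)\bigr),\\
\sigma_Y\circ(\Phi\frp a_{2n+1})\ \simA\ \sigma_Y\circ(\Phi\frp a_1)+n\,\la\sigma_Y\ra\circ\bigl((\Phi\frp a_1)+(\Phi\frp b_1)\bigr).
\end{gather*}
By the definition of $\frp$ the explicit framed correspondence $\Phi\frp b_1$ differs from $\Phi\frp a_1$ only by a sign in one framing function; moving that function into the last slot by Lemma~\ref{lm:basic_homotopies}(\ref{lm:basic_homotopies:elementary}) (a signed transposition has determinant $1$, hence is a product of elementary transvections) and then invoking Lemma~\ref{lm:basic_homotopies2}(\ref{lm:basic_homotopies2:hyper}) gives $\la\sigma_Y\ra\circ\bigl((\Phi\frp a_1)+(\Phi\frp b_1)\bigr)\simA h_Y\circ(\Phi\frp a_1)$, and substituting this into the two displays produces exactly the asserted identities in $\ZF_m(X,Y)$.

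The hard part is the one point glossed over: promoting the functions $\alpha\phi_1^{k-1}(\phi_1-t)$ and $\alpha(\phi_1-(1-t))$ to genuine explicit framed correspondences, i.e.\ checking that the moving zero loci $Z(\phi_1-t)$ and $Z(\phi_1-(1-t))$ cut out closed subschemes of $(\A^1\times X)\times_S\relcurve$ finite over $\A^1\times X$. This is exactly the purpose of the hypothesis that $(\phi_1,\pi_X)\colon U\to\A^1\times X$ be finite: it exhibits these loci as pullbacks of a finite morphism, whereas without it the displayed ``homotopies'' are not legitimate. Everything else — tracking reduced supports under the additivity relation and bookkeeping the units through Lemma~\ref{lm:basic_homotopies}(\ref{lm:basic_homotopies:derivative}) — is routine.
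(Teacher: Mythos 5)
Your argument is correct and follows essentially the same route as the paper: a one-variable homotopy ($\alpha\phi_1^{k-1}(\phi_1-t)$ versus the paper's $\alpha(\phi_1+t)\phi_1^{N}$, whose graph component is finite over $\A^1\times X$ precisely by the hypothesis on $(\phi_1,\pi_X)$) splits off one factor of $\phi_1$ per step, the unit factors are absorbed by Lemma~\ref{lm:basic_homotopies}(\ref{lm:basic_homotopies:derivative}), and iteration plus Lemma~\ref{lm:basic_homotopies2}(\ref{lm:basic_homotopies2:hyper}) gives both parts. Your explicit signed permutation moving $\pm\alpha\phi_1$ into the last framing slot before invoking Lemma~\ref{lm:basic_homotopies2}(\ref{lm:basic_homotopies2:hyper}) is a detail the paper leaves implicit, and is handled correctly.
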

\begin{proof}
Let $t$ be the coordinate on $\A^1$ and consider the regular function $(\phi_1+t)\phi_1^N\in\Gamma(\A^1\times U,\struct_{\A^1\times U})$. The zero locus decomposes as 
\[
Z((\phi_1+t)\phi_1^N)^{\red}=(\A^1\times Z(\phi_1))^{\red}\cup \Gamma^T_{-\phi_1},
\]
where $\Gamma^T_{-\phi_1}$ is the transpose of the graph of $-\phi_1$. The zero set $Z^\red=Z(\phi_1)^{\red}$ is finite over $X$ by the assumption, thus $(\A^1\times Z(\phi_1))^{\red}$ is finite over $\A^1\times X$. The graph $\Gamma^T_{-\phi_1}$ is isomorphic to $U$ and the projection $\Gamma^T_{-\phi_1}\to \A^1\times X$ is given by $(\phi_1, \pi_X)$ which is finite by the assumption of the lemma. Then $Z((\phi_1+t)\phi_1^N)$ is also finite over $\A^1\times X$. Thus we have an explicit framed correspondence 
\[
( Z((\phi_1+t)\phi^{N}_1), \A^1\times U, \alpha(\phi_1+t)\phi^{N}_1, g\circ \pi_U) \in \Fr_{\relcurve}(\A^1\times X,Y)
\]
with $\pi_U\colon \A^1\times U\to U$ being the projection. Hence
\begin{multline*}
\la Z,U,\alpha\phi_1^{N+1},g \ra \simA \la Z (\phi_1 ),U-Z(\phi_1 + 1),\alpha(\phi_1 + 1 )\phi_1^{N}|_{U-Z(\phi_1+1)},g|_{U-Z(\phi_1+1)} \ra + \\ + \la Z(\phi_1+ 1),U-Z,\alpha(\phi_1+ 1) \phi_1^{N}|_{U-Z},g|_{U-Z} \ra.
\end{multline*}
We have $(\phi_1+1)|_{Z(\phi_1)}=1$ and $\phi_1|_{Z(\phi_1+1)}=-1$, thus Lemma~\ref{lm:basic_homotopies}(\ref{lm:basic_homotopies:derivative}) yields
\begin{multline*}
\la Z(\phi_1) ,U-Z(\phi_1 + 1),\alpha(\phi_1 + 1 )\phi_1^{N}|_{U-Z(\phi_1+1)},g|_{U-Z(\phi_1+1)} \ra \simA\\ 
\simA \la Z(\phi_1) ,U-Z(\phi_1 + 1),\alpha\phi_1^{N}|_{U-Z(\phi_1+1)},g|_{U-Z(\phi_1+1)} \ra = \la Z ,U,\alpha\phi_1^{N},g \ra,
\end{multline*}
\begin{multline*}
\la Z(\phi_1+ 1),U-Z,\alpha(\phi_1+ 1) \phi_1^{N}|_{U-Z},g|_{U-Z} \ra \simA\\ 
\simA \la Z(\phi_1+ 1),U-Z,(-1)^N\alpha(\phi_1+ 1)|_{U-Z},g|_{U-Z} \ra = \la Z ,U,(-1)^N\alpha(\phi_1+1),g \ra.
\end{multline*}
The homotopy $\la Z(\phi_1+t),U\times \A^1,(-1)^N\alpha(\phi_1+t),g\circ\pi_U \ra$ yields
\[
\la Z(\phi_1) ,U,(-1)^N\alpha(\phi_1+1),g \ra \simA \la Z(\phi_1) ,U,(-1)^N\alpha\phi_1,g \ra.
\]

Summing up the above we see that
\[
\la Z,U,\alpha\phi_1^{N+1},g \ra \simA \la Z,U,\alpha\phi_1^{N},g \ra + \la Z,U,(-1)^N\alpha\phi_1,g \ra.
\]
Iterating we obtain the first claim of the lemma,
\begin{gather*}
\la Z,U,\alpha\phi_1^{2n},g \ra \simA n\left(\la Z,U,\alpha\phi_1,g\ra + \la Z,U,-\alpha\phi_1,g \ra \right),\\
\la Z,U,\alpha\phi_1^{2n+1},g \ra \simA \la Z,U,\alpha\phi_1,g\ra + n\left(\la Z,U,\alpha\phi_1,g\ra + \la Z,U,-\alpha\phi_1,g \ra \right).
\end{gather*}

The second claim of the lemma follows from the above equivalences and Lemma~\ref{lm:basic_homotopies2}(\ref{lm:basic_homotopies2:hyper}).
\end{proof}

\begin{lemma} \label{lm:framed_function}
Let $S$ be a scheme, $X,Y$ be schemes over $S$ and 
\[
(X\times\{0\}, (p\colon U\to X\times \A^m, r\colon X\times\{0\}\to U), \phi, g)\in \Fr_m(X,Y)
\]
be an explicit framed correspondence. Then
\[
(X\times\{0\},U,\phi,g) \simA (X\times\{0\},U,\phi,g')
\]
for any morphism of $S$-schemes $g'\colon U\to Y$ satisfying $g\circ r=g'\circ r$.
\end{lemma}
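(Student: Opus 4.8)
The plan is to reduce the assertion to the sharper statement that, up to $\A^1$-homotopy, the explicit framed correspondence $(X\times\{0\},U,\phi,g)$ depends on $g$ only through its restriction $g\circ r$ to the support. Since $g\circ r=g'\circ r$ by hypothesis, this immediately gives $(X\times\{0\},U,\phi,g)\simA(X\times\{0\},U,\phi,g')$, both being $\A^1$-homotopic to the same correspondence.

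First I would normalize the data. I replace $U$ by a smaller open \'etale neighborhood of $r(X\times\{0\})$ — which leaves the class in $\Fr_m(X,Y)$ unchanged, since the smaller neighborhood serves as a common \'etale neighborhood in the sense of the equivalence relation defining $\Fr_m$ — chosen so small that $p^{-1}(X\times\{0\})=r(X\times\{0\})$ scheme-theoretically; then the functions $\eta_i:=x_i\circ p$ (for the standard coordinates $x_i$ on $\A^m$) form a regular sequence cutting out the support. Using Lemma~\ref{lm:basic_homotopies} to act on the framing by an elementary matrix and by units equal to $1$ on the support, together with a compatible change of the \'etale structure morphism, I would bring $\phi$ into a normal form in which each $\phi_i$ is a polynomial expression in $\eta_1,\dots,\eta_m$ whose common zero locus meets the image of $p$ only in $r(X\times\{0\})$. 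This puts the framing in a shape that can be ``read off the base coordinates'', which is what makes the contraction below produce a correspondence with the correct support.

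The homotopy itself contracts the \'etale neighborhood onto its zero section. Set $\Phi\colon\A^1\times X\times\A^m\to X\times\A^m$, $\Phi(t,x,v)=(x,(1-t)v)$, and
\[
\widetilde U:=(\A^1\times X\times\A^m)\times_{\Phi,\,X\times\A^m,\,p}U ,
\]
so that $\widetilde U\to\A^1\times X\times\A^m$ is \'etale (a base change of $p$) and there is a projection $\pi\colon\widetilde U\to U$. I take as the homotopy the explicit framed correspondence
\[
H=\bigl(\A^1\times(X\times\{0\}),\ \widetilde U,\ \widetilde\phi,\ g\circ\pi\bigr)\in\Fr_m(\A^1\times X,Y),
\]
where $\widetilde\phi$ is the normal form of $\phi$, read in the base coordinates $v$ and pulled back to $\widetilde U$; by the normalization its zero locus on $\widetilde U$ is exactly $\A^1\times r(X\times\{0\})$ once one discards the closed components of the naive zero locus disjoint from the section of $\widetilde U$ over $\A^1\times(X\times\{0\})$ (which is permitted, as it only shrinks $\widetilde U$ to a smaller \'etale neighborhood). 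At $t=0$ one has $\Phi(0,-)=\id$, hence $\widetilde U|_{t=0}\cong U$ and $H\circ i_0=(X\times\{0\},U,\phi,g)$ in the normalized form. At $t=1$ the map $\Phi(1,-)$ collapses the $\A^m$-factor, so $\widetilde U|_{t=1}\cong\A^m\times r(X\times\{0\})$, the projection $\pi$ lands in $r(X\times\{0\})$, and therefore $g\circ\pi$ factors through $g\circ r$; thus $H\circ i_1$ depends on $g$ only through $g\circ r$. Carrying out the same construction with $g'$ — legitimate because $g'\circ r=g\circ r$ — yields a homotopy $H'$ with $H'\circ i_0=(X\times\{0\},U,\phi,g')$ and $H'\circ i_1=H\circ i_1$. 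Concatenating $H$ with the reverse of $H'$ (i.e.\ $H'$ precomposed with $t\mapsto 1-t$), and using Lemma~\ref{lm:basic_homotopies} to undo the normalization at the two ends, gives the desired $\A^1$-homotopy.

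The step I expect to be the main obstacle is the normalization of $\phi$: one has to check that, up to $\A^1$-homotopy and shrinking, the framing can always be expressed as a polynomial change of the coordinate functions $\eta_i$ in such a way that the zero locus produced by the contraction acquires no spurious component meeting the support — in particular none over the degenerate slice of $\widetilde U$, where the naive pullback $\phi\circ\pi$ picks up an extra factor of $1-t$. Once this is arranged, the rest is routine verification: that $\widetilde U$ is an \'etale neighborhood of the stated support, that $\widetilde\phi$ is a regular framing with the correct zero locus, and that the restrictions $H\circ i_0$ and $H\circ i_1$ are as described.
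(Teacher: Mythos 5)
Your geometric idea --- contract the \'etale neighborhood onto the support so that the map to $Y$ only remembers $g\circ r$ --- is exactly the idea behind the paper's proof, but your implementation has a genuine gap that the paper's construction is specifically designed to avoid. You form a \emph{single} fibre product $\widetilde U=(\A^1\times X\times\A^m)\times_{\Phi,\,X\times\A^m,\,p}U$, so the only available map $\pi\colon\widetilde U\to U$ factors through the contraction $\Phi$. You are then forced to transport the framing through the contraction as well, which is why you need $\phi$ to be ``readable off the base coordinates'': you must replace $\phi$ by functions pulled back from $\A^1\times X\times\A^m$, since the naive pullback $\phi\circ\pi$ degenerates at the collapsed end (its zero locus there is all of $\widetilde U|_{t=1}$, which is not finite over $X$). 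This normalization --- which you yourself flag as the main obstacle --- is not available in the generality of the lemma: $S$, $X$, $Y$ are arbitrary schemes, $U$ is an arbitrary \'etale neighborhood, and the $\phi_i$ are arbitrary regular functions on $U$ cutting out the support; the moves of Lemma~\ref{lm:basic_homotopies} only change $\phi$ by the action of $\GL_m(\Gamma(U,\struct_U))$, which never makes the entries descend to $X\times\A^m$. (A Nakayama-type argument as in Lemma~\ref{lm:suspension} writes $\phi$ in terms of the coordinates $\eta_i$ only with coefficients in $\struct_U$ near the support, and only after localizing; here there is no local ring to localize at.) So the proof is incomplete at its central step.

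The paper's proof sidesteps this entirely by using a \emph{double} fibre product: it takes $U_F=(\A^1\times U)\times_{\A^1\times X\times\A^m}U'_F$ with $U'_F$ the pullback of $p$ along the contraction $F$. This gives two different projections to $U$: the ``constant'' one through $\A^1\times U$, along which the framing $\phi$ is pulled back unchanged (so its zero locus stays finite over $\A^1\times X$ with no normalization needed), and the ``contracting'' one through $U'_F$, along which only the morphism $g$ is pulled back. At $t=0$ the contracting projection lands in $r(X\times\{0\})$, giving $(X\times\{0\},U,\phi,g\circ r\circ\pi_X)$, and the same for $g'$; the framing is never touched. If you modify your construction to use this second fibre product and keep $\widetilde\phi=\phi\circ(\text{constant projection})$, your argument goes through and the normalization step disappears.
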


\begin{proof}
We have $p^{-1}(X\times \{0\})=r(X\times \{0\})\sqcup \widetilde{X}$. Shrinking $U$ to $U-\widetilde{X}$ we may assume that $p$ is an isomorphism over $X\times\{0\}$.

Consider the morphism
\[
F\colon \A^1\times X\times \A^m \to X\times \A^m, \quad (t,u,x_1,\hdots,x_m)\mapsto (u,tx_1,\hdots,tx_m)
\]
and the Cartesian squares
\[
\xymatrix @C=4pc {
U_F \podr \ar[r]^{F_2} \ar[d]_{q} & U'_F \ar[r]^{F_1}\podr \ar[d] & U \ar[d]^p \\
\A^1\times U \ar[r]^(0.45){\id_{\A^1}\times p} & \A^1\times X\times \A^m \ar[r]^(0.55)F & X\times \A^m
}
\]
Let $\pi_U\colon \A^1\times U\to U$ be the projection. A straightforward computation shows that the homotopy
\[
(\A^1\times X\times \{0\}, (\id_{\A^1}\times p) \circ q\colon U_F\to \A^1\times X\times \A^m, \phi\circ \pi_U \circ q, g\circ F_1\circ F_2)\in \Fr_m(\A^1\times X,Y)
\]
provides an equivalence
\[
(X\times\{0\},U,\phi,g) \simA (X\times\{0\},U,\phi,g\circ r\circ \pi_X)
\]
with $\pi_X\colon U\to X$ being the projection. The same argument yields
\[
(X\times\{0\},U,\phi,g') \simA (X\times\{0\},U,\phi,g'\circ r \circ \pi_X)
\]
and the claim follows.
\end{proof}

\begin{definition} \label{def:Jac}
Let $S=\Spec R$ be the spectrum of a local ring and $Y$ be a scheme over $S$. Consider an explicit framed correspondence 
\[
(Z,U,\phi,g)=(Z,(p\colon U\to \A^m_S,r\colon Z\to U),(\phi_1,\phi_2,\hdots,\phi_m),g)\in \Fr_m^S(S,Y)
\]
and suppose that the projection $Z\to S$ is an isomorphism. Let $y\in Z$ be the closed point and $p^*\colon \struct_{\A^m_S,y} \to \struct_{U,r(y)}$ be the morphism of local rings induced by $p$. Denote $x_1,x_2,\hdots,x_m$ the standard coordinates on $\A^m_S$. Then $Z$ is defined in $\Spec \struct_{\A^m_S,y}$ (as well as in $\A^m_S$) by the ideal $I=(x_1-a_1,x_2-a_2,\hdots,x_m-a_m)$ for some $a_1,a_2,\hdots,a_m\in R$. Since $p\colon U\to \A^m_S$ is an \etale neighborhood of $Z$ then $p^*$ induces an isomorphism $I/I^2\cong p^*(I)/p^*(I)^2$ of free $R$-modules with the canonical basis on the left given by the classes $\overline{x}_1-a_1,\overline{x}_2-a_2,\hdots,\overline{x}_m-a_m$. Let $J\in \mathrm{M}_m(R)$ be the matrix that takes $(\overline{x}_i-a_i)$-s to $\overline{\phi}_j$-s,
\[
(\overline{x}_1-a_1,\overline{x}_2-a_2,\hdots,\overline{x}_m-a_m)\cdot J = (\overline{\phi}_1,\overline{\phi}_2,\hdots, \overline{\phi}_m).
\]
We denote
\[
\Jac(Z,U,\phi,g)=\det J \in R
\]
and refer to it as \textit{the Jacobian of $(Z,U,\phi,g)$}.
\end{definition}

\begin{lemma} \label{lm:suspension}
Let $S=\Spec R$ be the spectrum of a local ring and $Y$ be a scheme over $S$. Consider an explicit framed correspondence 
\[
(Z,U,\phi,g)=(i\colon Z\to \A^m_S,(p\colon U\to \A^m_S,r\colon Z\to U),(\phi_1,\phi_2,\hdots,\phi_m),g)\in \Fr_m^S(S,Y).
\]
Suppose that
\begin{enumerate}
\item
the projection $Z\to S$ is an isomorphism;
\item
$\Jac(Z,U,\phi,g)=1$.
\end{enumerate}
Then $(Z,U,\phi,g)\simA \sigma_Y^m \circ g\circ \rho^{-1}$ with $\rho\colon Z\to S$ being the projection.
\end{lemma}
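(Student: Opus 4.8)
The goal is to show that an explicit framed correspondence $(Z,U,\phi,g)$ with $Z\to S$ an isomorphism and $\Jac(Z,U,\phi,g)=1$ is $\A^1$-homotopic to the level $m$ suspension $\sigma_Y^m\circ g\circ\rho^{-1}$ of the morphism $g|_Z\colon Z\to Y$. The strategy is to reduce to the ``standard'' framing via a sequence of the elementary moves assembled in Lemmas~\ref{lm:basic_homotopies}, \ref{lm:framed_function}, and the Jacobian normalization encoded in Definition~\ref{def:Jac}. First I would use Lemma~\ref{lm:framed_function}: since $Z\to S$ is an isomorphism, $g$ is $\A^1$-homotopic (rel the framing) to any $g'$ agreeing with $g$ on $r(Z)$, so we may replace $g$ by $g\circ r\circ\rho^{-1}\circ p'$ for a suitable projection, i.e.\ normalize $g$ to be ``constant along the fibres of $p$'' after shrinking $U$. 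This means it suffices to treat the framing data $(Z,U,\phi)$ and then compose with the honest morphism $g|_Z\circ\rho^{-1}\colon S\to Y$ at the end.

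\textbf{Main steps.} After shrinking $U$ so that $p\colon U\to\A^m_S$ is an isomorphism onto an open neighborhood of $Z$ (using $p^{-1}(Z)=r(Z)\sqcup\widetilde Z$ as in the proof of Lemma~\ref{lm:framed_function}), we may assume $U$ is an open subscheme of $\A^m_S$ containing $Z$, and $Z=Z(x_1-a_1,\dots,x_m-a_m)$ for some $a_i\in R$. Translating the coordinates (an $\A^1$-homotopy dragging $a_i$ to $0$ along $ta_i$, legitimate because $Z$ stays finite and the zero locus is preserved) we reduce to $Z=Z(x_1,\dots,x_m)=S\times\{0\}$ with the standard framing $(x_1,\dots,x_m)$ having Jacobian $1$; our given $\phi=(\phi_1,\dots,\phi_m)$ also cuts out $Z$ and by hypothesis has $\Jac=1$, i.e.\ $(\overline x_1,\dots,\overline x_m)J=(\overline\phi_1,\dots,\overline\phi_m)$ with $\det J=1$ in $R$. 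Since $R$ is local, $J$ is a product of elementary transvections over $R$, hence $\phi\cdot A=(x_1,\dots,x_m)$ modulo $I^2$ for an elementary $A\in\GL_m(\Gamma(U,\struct_U))$ (lift the transvections); Lemma~\ref{lm:basic_homotopies}(\ref{lm:basic_homotopies:elementary}) gives $(Z,U,\phi,g)\simA(Z,U,\phi\cdot A,g)$, so we may assume $\phi\equiv(x_1,\dots,x_m)\bmod I^2$. Finally one feeds in the homotopy $\phi^{(t)}=t\cdot(x_1,\dots,x_m)+(1-t)\cdot\phi$: because $\phi$ and $(x_1,\dots,x_m)$ agree modulo $I^2$, the common zero locus of the $\phi^{(t)}_i$ along $\A^1\times Z$ is still $\A^1\times Z$ with the correct (reduced) support and the Jacobian at $Z$ stays $1$, and one checks $Z(\phi^{(t)})$ is finite over $\A^1\times S$ (it equals $\A^1\times Z$ Zariski-locally near $Z$ after shrinking $U$), so this is a legitimate element of $\Fr_m(\A^1\times S,Y)$ connecting $(Z,U,\phi,g)$ to $(Z,U,(x_1,\dots,x_m),g)$. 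The latter, with $g$ already normalized to factor through $Z$, is by definition (unwinding the composition of $\sigma_Y$ with itself $m$ times) exactly $\sigma_Y^m\circ g\circ\rho^{-1}$.

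\textbf{Expected obstacle.} The delicate point is justifying that each interpolating family genuinely defines a framed correspondence over $\A^1\times S$, i.e.\ that the candidate zero locus remains \emph{closed and finite over $\A^1\times X$} and has the right reduced structure throughout the homotopy; this is where the hypothesis ``$\phi$ agrees with the standard framing modulo $I^2$'' (obtained from $\Jac=1$ and the elementary-matrix move) does the real work, guaranteeing via Nakayama that near $Z$ the ideal $(\phi^{(t)}_1,\dots,\phi^{(t)}_m)$ and $(x_1,\dots,x_m)$ generate the same ideal locally, so the support does not jump. A secondary bookkeeping nuisance is keeping the \etale-neighborhood data $(p,r)$ and the morphism $g$ coherent across the chain of shrinkings and translations; this is handled exactly as in the proof of Lemma~\ref{lm:framed_function}, invoking that lemma once more at the end to discard the auxiliary component of $p^{-1}(Z)$ and to replace $g$ by $g|_Z\circ(\text{projection})$ without changing the $\A^1$-homotopy class.
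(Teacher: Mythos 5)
Your argument follows the same skeleton as the paper's: translate the support to $S\times\{0\}$ by a homotopy, normalize $g$ via Lemma~\ref{lm:framed_function}, and then exploit $\Jac=1$ together with the fact that a determinant-one matrix over a local ring is elementary, feeding the result into Lemma~\ref{lm:basic_homotopies}(\ref{lm:basic_homotopies:elementary}). The endgame, however, is genuinely different. The paper applies Nakayama once, at the closed point $r(y)$ of $U$, to upgrade the Jacobian congruence to an \emph{exact} identity $\phi\cdot A=(\widetilde{x}_1,\dots,\widetilde{x}_m)$ with $A$ invertible near $r(y)$; after rescaling $\phi_m$ by $\det A$ (Lemma~\ref{lm:basic_homotopies}(\ref{lm:basic_homotopies:derivative})) the matrix is elementary over a neighborhood, and the elementary-matrix homotopies alone carry $\phi$ to the standard framing --- no support analysis is needed along the way, because an elementary matrix preserves the generated ideal for every value of the homotopy parameter. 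You instead stop at the congruence $\phi\equiv(x_1,\dots,x_m)\bmod I^2$ and finish with the linear interpolation $\phi^{(t)}=t\cdot x+(1-t)\cdot\phi$. This can be made rigorous, but it is exactly where your write-up is thinnest: you must run the Nakayama argument at \emph{every} point $z$ of $\A^1\times r(Z)$ (not just at the closed point of $U$, since $\A^1\times U$ has no preferred one) to see that $(\phi^{(t)}_1,\dots,\phi^{(t)}_m)_z=I_z$, conclude that $\A^1\times r(Z)$ is open and closed in $Z(\phi^{(t)})$, and then delete the complementary closed part of $Z(\phi^{(t)})$ from $\A^1\times U$ to obtain a legitimate element of $\Fr_m(\A^1\times S,Y)$ whose endpoints are equivalent to the two correspondences in question. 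The phrase ``after shrinking $U$'' does not quite capture that the shrinking must be performed on the total space of the homotopy. The paper's route buys you freedom from this verification; yours saves the $\det A$ rescaling step but pays for it with the support control you yourself flag as the obstacle.

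One claim in your reduction is actually false as stated: removing the extra component of $p^{-1}(Z)$ makes $p$ an isomorphism \emph{over} $Z$, but an \etale map over a non-Henselian local base need not restrict to an open immersion on any neighborhood of $r(Z)$, so you may not assume that $U$ is an open subscheme of $\A^m_S$. This is harmless for the rest of the argument --- simply work with $\widetilde{x}_i=p^*x_i$ throughout and identify the final correspondence with $\sigma_Y^m\circ g\circ\rho^{-1}$ using the equivalence relation that permits shrinking the \etale neighborhood, exactly as the paper does --- but it should be repaired.
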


\begin{proof}
The closed immersion $i\circ \rho^{-1}\colon S\to \A^m_S$ is given by 
\[
x_1-a_1, x_2-a_2, \hdots, x_m-a_m \in R[x_1,x_2,\hdots,x_m]
\]
with $a_l\in R,\, 1\le l\le m,$ and $x_l$-s being the standard coordinates on $\A^m$. Consider the morphism
\[
F\colon \A^1\times \A^m_S\to \A^m_S,\quad (t,x_1,x_2,\hdots,x_m)\mapsto (x_1-ta_1,x_2-ta_2,\hdots,x_m-ta_m)
\]
and set
\[
p_F=(\pi_{\A^1}, F\circ (\id_{\A^1}\times p))\colon \A^1\times U\to \A^1\times \A^m_S
\]
with $\pi_{\A^1}\colon \A^1\times U\to \A^1$ being the projection. Consider the homotopy 
\[
(F\circ (\id_{\A^1}\times i) \colon\A^1\times Z \to \A^1\times \A^m_S ,p_F \colon \A^1\times U\to \A^1\times\A^m_S,\phi\circ \pi_U,g\circ \pi_U).
\]
Here $\pi_U\colon \A^1\times U\to U$ is the projection. The homotopy gives rise to an equivalence
\[
(i\colon Z\to \A^m_S,p\colon U\to \A^m_S,\phi,g) \simA (S\times \{0\},F|_{\{1\}\times \A^m_S}\circ p\colon U\to \A^m_S,\phi,g).
\]
In view of the above equivalence and Lemma~\ref{lm:framed_function} from now on we assume that $Z=S\times\{0\}$ and $g=f\circ \pi_S\circ p$ for a morphism $f\in \Hom_{\Sch_S}(S,Y)$ and the projection $\pi_S\colon \A^m_S\to S$, i.e. that
\[
(Z,U,\phi,g) = (S\times\{0\}, (p\colon U\to \A^m_S, r\colon S\to U), \phi, f\circ \pi_S\circ p).
\]

Let $y\in S$ be the closed point. As before, denote $x_1,x_2,\hdots,x_m$ the standard coordinates on $\A^m_S$ and consider $\widetilde{x}_l=p^*x_l\in \Gamma(U,\struct_U), 1\le l\le m,$ the regular functions on $U$ given by $x_l$-s composed with $p$. The first assumption of the lemma yields that the ideal in $\struct_{U,r(y)}$ generated by $\phi_1,\phi_2,\hdots,\phi_m$ is contained in the ideal $\widetilde{I}=(\widetilde{x}_1,\widetilde{x}_2,\hdots,\widetilde{x}_m)$. The second assumption yields that these ideals coincide modulo $\widetilde{I}^2$ whence, by Nakayama's lemma, the ideals coincide. Hence there exists $A\in M_m(\struct_{U,r(y)})$ such that
\[
(\phi_1,\phi_2,\hdots,\phi_m) \cdot A = (\widetilde{x}_1,\widetilde{x}_2,\hdots,\widetilde{x}_m).
\]
By Definition~\ref{def:Jac} we have $\det A =\Jac (Z,U,\phi,g)^{-1} \mmod \widetilde{I}$. Since $\Jac (Z,U,\phi,g)=1$ the matrix $A$ is invertible. Let $U'$ be a Zariski neighborhood of $r(y)$ such that $A$ is defined and invertible over $U'$.

Lemma~\ref{lm:basic_homotopies}(\ref{lm:basic_homotopies:derivative}) yields
\[
(Z,U,(\phi_1,\phi_2,\hdots,\phi_m),g) \simA (Z,U,(\phi_1,\phi_2,\hdots,(\det A)\phi_m),g)
\]
thus we may assume that $\det A=1$. Since $A|_{\Spec \struct_{U',r(y)}}$ is elementary as a matrix of determinant $1$ over a local ring we can choose a Zariski neighborhood $U''$ of $r(y)$ such that $A|_{U''}$ is elementary. Then Lemma~\ref{lm:basic_homotopies}(\ref{lm:basic_homotopies:elementary}) yields
\begin{multline*}
(Z,U,\phi,g) = (S\times\{0\}, p\colon U\to \A^m_S, \phi, f\circ \pi_S\circ p) \simA\\
\simA(S\times\{0\}, p|_{U''}\colon U''\to \A^m_S, (\widetilde{x}_1|_{U''},\widetilde{x}_2|_{U''},\hdots,\widetilde{x}_m|_{U''}), f\circ \pi_S\circ p|_{U''}) = \\
= (S\times\{0\}, \A^m_S, (t_1,t_2,\hdots,t_m), f\circ \pi_S) = \sigma_Y^m\circ f.
\end{multline*}
Here the second to the last equality is given by shrinking the \etale neighborhood of $S\times\{0\}$ in $\A^m_S$ from $\A^m_S$ to $U''$ via $p|_{U''}$. The claim follows.
\end{proof}

\section{Framed presheaves} \label{sect:frpr}

If not otherwise specified, all the presheaves considered below are presheaves of abelian groups.

\begin{definition}
Let $S$ be a scheme. A \textit{framed presheaf} over $S$ is a presheaf on $\Fr_*(S)$. A \textit{linear framed presheaf over $S$} is an additive presheaf on $\ZF_*(S)$. We adopt the following terminology.
\begin{enumerate}
\item
A (linear) framed presheaf $\cF$ is \textit{stable} if
\[
\sigma_Y^*\colon \cF(Y)\to \cF(Y)
\]
is an isomorphism for every smooth scheme $Y$ over $S$.
\item
A (linear) framed presheaf $\cF$ is \textit{homotopy invariant} if
\[
\pi_{Y}^*\colon \cF(Y)\to \cF(\A^1\times Y)
\]
is an isomorphism for every smooth scheme $Y$ over $S$. Here $\pi_Y\colon \A^1\times Y\to Y$ is the projection.
\item
A linear framed presheaf $\cF$ is \textit{$h$-torsion} if
\[
h_Y^*=0\colon \cF(Y)\to \cF(Y)
\]
for every smooth scheme $Y$ over $S$. We shorten the notation as $h\cF=0$.
\end{enumerate}
\end{definition}

\begin{remark}
It is clear that for a homotopy invariant stable (linear) framed presheaf $\cF$ one has $\Phi^*=\Psi^*\colon \cF(Y)\to\cF(X)$ for $\Phi\in\Fr_n(X,Y)$ and $\Psi\in\Fr_{n+m}(X,Y)$ such that $\sigma_Y^m\circ \Phi\simA \Psi$.
\end{remark}

\begin{lemma}[{cf. \cite[the discussion above Theorem~1.1]{GP15}}] \label{lm:additivity}
Let $S$ be a scheme and $\cF$ be a homotopy invariant stable framed presheaf of abelian groups over $S$ such that for every smooth scheme $X$ over $S$ the canonical embeddings $i_1,i_2\colon X\to X\sqcup X$ induce an isomorphism
\[
(i_1^*,i_2^*)\colon \cF(X\sqcup X)\xrightarrow{\simeq} \cF(X)\oplus \cF(X).
\]
Then $\cF$ admits a canonical structure of a homotopy invariant stable linear framed presheaf over $S$, i.e. there exists a canonical linear framed presheaf $\widetilde{\cF}$ fitting in the following commutative diagram.
\[
\xymatrix{
\Fr_*(S) \ar[d] \ar[r]^{\cF} & \mathrm{Ab} \\
\ZF_*(S) \ar[ru]_{\widetilde{\cF}}& 
}
\]
\end{lemma}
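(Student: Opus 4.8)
The plan is to define $\widetilde{\cF}$ on objects by $\widetilde{\cF}(Y)=\cF(Y)$ and to extend the action to linear combinations of framed correspondences. First I would observe that the relation defining $\ZF^S_*(X,Y)$ from $\Z[\Fr^S_*(X,Y)]$ — namely $\la Z\sqcup Z',U,\phi,g\ra = \la Z,U-Z',\ldots\ra + \la Z',U-Z,\ldots\ra$ — is exactly the additivity with respect to disjoint decomposition of the support. So the main point is that $\cF$, being a presheaf on $\Fr_*(S)$, already respects this relation once we know $\cF$ sends disjoint unions to products. Concretely, given an explicit framed correspondence $(Z\sqcup Z',U,\phi,g)\in\Fr_n(X,Y)$, shrinking $U$ one gets $U=U_0\sqcup U_0'$ with $Z\subset U_0$, $Z'\subset U_0'$, and then the induced map $\cF(Y)\to\cF(X)$ factors through $\cF(X\sqcup X)\xrightarrow{\sim}\cF(X)\oplus\cF(X)$; the two components are the maps induced by $(Z,U_0,\phi|_{U_0},g|_{U_0})$ and $(Z',U_0',\phi|_{U_0'},g|_{U_0'})$, and composing with the fold map $X\to X\sqcup X$ (or rather its effect on $\cF$) yields that the class of the disjoint union acts as the sum. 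This is the content that makes $\Z[\Fr_*(X,Y)]\to\mathrm{Ab}(\cF(Y),\cF(X))$ descend to $\ZF_*(X,Y)$.

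Next I would check this descended assignment is additive as a presheaf, i.e. that $\widetilde{\cF}$ is an additive functor on $\ZF_*(S)$: the group structure on $\ZF_n(X,Y)$ is the free-abelian-group structure (mod $A$), and by construction a formal sum $\sum n_i\la a_i\ra$ acts as $\sum n_i a_i^*$, so additivity in the morphism variable is automatic. Functoriality (compatibility with composition of framed correspondences) reduces to the functoriality already present for $\cF$ on $\Fr_*(S)$, extended $\Z$-bilinearly; since composition in $\ZF_*(S)$ is induced $\Z$-bilinearly from that in $\Fr_*(S)$, the identity $(\Psi\circ\Phi)^*=\Phi^*\circ\Psi^*$ for single framed correspondences propagates to linear combinations. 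Homotopy invariance and stability of $\widetilde{\cF}$ are literally the same statements as for $\cF$ since $\widetilde{\cF}$ has the same values on objects and $\sigma_Y$, $\pi_Y$ are honest framed correspondences.

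The one genuinely delicate step is the first one: verifying that the map $\cF(Y)\to\cF(X)$ attached to $(Z\sqcup Z',U,\phi,g)$ really is the sum of the two pieces. This uses the hypothesis $(i_1^*,i_2^*)\colon\cF(X\sqcup X)\xrightarrow{\sim}\cF(X)\oplus\cF(X)$ together with the fact that the framed correspondence $(Z\sqcup Z',U_0\sqcup U_0',\phi,g)$ from $X$ to $Y$ can be written as the composition of the framed correspondence $X\xrightarrow{\nabla} X\sqcup X$ (the codiagonal, of level $0$) — or more precisely, one exhibits it as pulled back along $X\to X\sqcup X$ from a framed correspondence $X\sqcup X\to Y$ whose restriction to the two copies gives the two summands. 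I expect the bookkeeping here — matching the \'etale neighborhoods, the tuples $\phi$, and the morphisms $g$ across the disjoint decomposition, and confirming the presheaf naturality square for $\cF$ along $X\hookrightarrow X\sqcup X$ — to be the main obstacle, though it is routine in the sense of \cite[the discussion above Theorem~1.1]{GP15}. Finally, uniqueness (the word ``canonical'' and the commutative triangle) is immediate: any additive $\widetilde{\cF}$ making the triangle commute must agree with $\cF$ on objects and on the classes $\la a\ra$, and these generate, so $\widetilde{\cF}$ is forced.
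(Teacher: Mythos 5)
Your plan correctly isolates the delicate point (that $\Phi^*=\Phi_1^*+\Phi_2^*$ for $\Phi=(Z\sqcup Z',U,\phi,g)$), but the mechanism you propose for it does not work, and this is exactly where the paper has to do real work. First, a literal ``shrinking $U$ to get $U=U_0\sqcup U_0'$'' is impossible when $U$ is connected; what is true is that, up to the defining equivalence, one may replace $U$ by $(U-Z')\sqcup(U-Z)$. But even granting that, your factorization collapses: you invoke a ``codiagonal $X\xrightarrow{\nabla}X\sqcup X$ of level $0$,'' and no such morphism of schemes (or level-$0$ correspondence) exists --- the fold map goes the other way, $X\sqcup X\to X$, and its pullback $\nabla^*\colon\cF(X)\to\cF(X\sqcup X)$ is the diagonal, not the sum. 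To realize the sum $i_1^*+i_2^*\colon\cF(X\sqcup X)\to\cF(X)$ as pullback along a single correspondence one needs a \emph{level-one} framed correspondence: the paper takes $\omega=(X\times\{0\}\sqcup X\times\{1\},\,X\times(\A^1-\{1\})\sqcup X\times(\A^1-\{0\}),\,t\sqcup(t-1),\,\pi_X\sqcup\pi_X)\in\Fr_1(X,X\sqcup X)$, proves $j_1\circ\omega\simA\sigma_X\simA j_2\circ\omega$ to identify $\omega^*$ with $+\circ(i_1^*,i_2^*)$, and then checks $\widetilde{\Phi}\circ\omega\simA\sigma_Y\circ\Phi$ for $\widetilde{\Phi}=(Z_1\sqcup Z_2,U\sqcup U,\phi\sqcup\phi,g\sqcup g)\in\Fr_m(X\sqcup X,Y)$, concluding $\Phi^*=\omega^*\circ\widetilde{\Phi}^*=\Phi_1^*+\Phi_2^*$.

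A symptom of the gap is that your argument never uses homotopy invariance or stability except to transport these properties to $\widetilde{\cF}$ at the end; in the actual proof both hypotheses are used essentially (the identities above hold only up to $\A^1$-homotopy and up to composition with $\sigma$). The remaining parts of your plan --- descent from $\Z[\Fr_*(X,Y)]$ to $\ZF_*(X,Y)$ once additivity on supports is known, $\Z$-bilinearity of composition, and the uniqueness/canonicity remark --- are fine.
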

\begin{proof}
Let $\ZFr_*(S)$ be the category with the objects being those of $\Fr_*(S)$ and the morphisms given by the free abelian groups $\mathbb{Z}[\Fr_*(X,Y)]$. There exists a natural extension of $\cF$ to $\ZFr_*(S)$, i.e. there is a canonical functor $\cF'$ making the following diagram commute.
\[
\xymatrix{
\Fr_*(S) \ar[d] \ar[r]^{\cF} & \mathrm{Ab} \\
\ZFr_*(S) \ar[ru]_{\cF'} &
}
\]
We need to show that for schemes $X,Y$ smooth over $S$ and an explicit correspondence $\Phi=(Z_1\sqcup Z_2, U,\phi,g)\in \Fr_m(X,Y)$ we have $\Phi^*=\Phi_1^*+\Phi_2^*$ where
\[
\Phi_1 = (Z_1, U-Z_1, \phi,g),\quad \Phi_2=(Z_2, U-Z_2, \phi,g).
\]
Here we shorten the notation omitting the restriction to the respective neighborhoods.

Consider the explicit correspondences 
\[
j_1=(X\sqcup \emptyset,X\sqcup \emptyset, \id_X),\, j_2=(\emptyset\sqcup X,\emptyset\sqcup X, \id_X)\in \Fr_0(X\sqcup X,X)=\Hom_{\Sch_{S},\bullet}((X\sqcup X)_+,X_+).
\]
Here the supports are given by the components of $X\sqcup X$, the neighborhoods coincide with the supports and there are no functions $\phi$ since these are level $0$ correspondences. One clearly has
\[
j_1\circ i_1 = j_2\circ i_2 =\id_X
\]
and since $(i_1^*,i_2^*)\colon \cF(X\sqcup X)\xrightarrow{\simeq} \cF(X)\oplus \cF(X)$ is an isomorphism then $( j_1^*, j_2^*)=(i_1^*,i_2^*)^{-1}$.

Set
\[
\omega=(X\times\{0\}\sqcup X\times \{1\}, X\times(\A^1-\{1\})\sqcup X\times(\A^1-\{0\}) , t\sqcup (t-1), \pi_X\sqcup \pi_X)\in \Fr_1(X,X\sqcup X).
\]
Here the \etale neighborhood is given by the disjoint union $X\times(\A^1-\{1\})\sqcup X\times(\A^1-\{0\})$ with the projection to $X\times \A^1$, the regular function on the components of the \etale neighborhood is given by $t$ and $t-1$ respectively, and the morphism $\pi_X\sqcup \pi_X\colon X\times(\A^1-\{1\})\sqcup X\times(\A^1-\{0\})\to X\sqcup X$ is the component-wise projection.  It is straightforward to check that 
\[
j_1\circ \omega \simA \sigma_X \simA j_2\circ \omega.
\]
Thus the following diagram commutes.
\[
\xymatrix @C=6pc{
\cF(X\sqcup X) \ar[r]^{\omega^*} \ar[d]_{(i_1^*,i_2^*)} & \cF(X) \\
\cF(X)\oplus \cF(X) \ar@/_1.0pc/[ur]^{+}  \ar@/_1.0pc/[u]_{(j_1^*,j_2^*)}& 
}
\]

Denote
\[
\widetilde{\Phi}= (Z_1\sqcup Z_2, U\sqcup U, \phi\sqcup \phi ,g\sqcup g) \in \Fr_m(X\sqcup X,Y),
\]
with $Z_1$ and $Z_2$ placed over the different copies of $X$. It is straightforward to see that $\widetilde{\Phi}\circ \omega \simA \sigma_Y\circ \Phi$ and $\widetilde{\Phi}\circ i_1 = \Phi_1,\, \widetilde{\Phi}\circ i_2 = \Phi_2$. Thus
\[
\Phi^*= \omega^*\circ \widetilde{\Phi}^*= (i_1^*+i_2^*)\circ \widetilde{\Phi}^*= \Phi_1^*+\Phi_2^*.\qedhere
\]
\end{proof}

\begin{definition} \label{def:extend}
Let $k$ be a field. An \textit{essentially smooth scheme} over $k$ is a noetherian $k$-scheme which is the inverse limit of a filtering system $\{Y_\alpha\}$ with each transition morphism $Y_\beta\to Y_\alpha$ being an \etale affine morphism between smooth $k$-schemes. We refer to \cite{GD67} for the properties of the essentially smooth schemes that we use below. The category of essentially smooth $k$-schemes is denoted $\mathrm{EssSm}_k$. A particular example of an essentially smooth $k$-scheme is $\Spec \struct_{X,x}^h$ for a smooth scheme $X$ over $k$ and a point $x\in X$.

Let $\cF$ be a presheaf on $\Sm_k$. There is a canonical extension $\widetilde{\cF}$ of $\cF$ to the category $\mathrm{EssSm}_k$ with $\widetilde{\cF}(Y)=\varinjlim \cF(Y_\alpha)$ for an essentially smooth $k$-scheme $Y=\varprojlim Y_\alpha$.  Let $S$ be an essentially smooth $k$-scheme. Every scheme $Y$ smooth over $S$ can be viewed as an essentially smooth $k$-scheme thus the presheaf $\cF$ on $\Sm_k$ gives rise to a presheaf $\widetilde{\cF}$ on $\Sm_S$.
\end{definition}

\begin{lemma} \label{lm:extension}
Let $k$ be a field and $S$ be an essentially smooth $k$-scheme. Then for every (linear) framed presheaf $\cF$ over $k$ there exists a canonical structure of a (linear) framed presheaf on the associated presheaf $\widetilde{\cF}$ on $\Sm_S$. If $\cF$ is homotopy invariant, stable or $h$-torsion then so is $\widetilde{\cF}$.
\end{lemma}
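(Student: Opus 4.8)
The plan is to reduce everything to the case where $S$ is actually smooth over $k$ by a standard limit argument, using that framed correspondences are compatible with filtered limits of affine schemes. First I would recall the setup of Definition~\ref{def:extend}: write $S=\varprojlim_\alpha S_\alpha$ as a filtered limit of smooth affine $k$-schemes with \'etale affine transition maps, and observe that any scheme $Y$ smooth over $S$ descends to a scheme $Y_\beta$ smooth over some $S_\beta$, with $Y=\varprojlim_{\alpha\ge\beta}(Y_\beta\times_{S_\beta}S_\alpha)$. The key point to establish is that the sets $\Fr^S_*(X,Y)$, for $X,Y$ smooth over $S$, are computed as the filtered colimit $\varinjlim_\alpha \Fr^{S_\alpha}_*(X_\alpha,Y_\alpha)$ of the corresponding sets over the $S_\alpha$; this is where one uses that the data of an explicit framed correspondence (a finite closed subscheme $Z$, an \'etale neighborhood $U$, finitely many regular functions $\phi$, and a morphism $g$) are all finitely presented, hence descend to some finite stage and agree there after possibly enlarging $\alpha$ (see \cite{GD67}). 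The same holds for the linear version $\ZF^S_*(X,Y)$ and for $\A^1$-homotopies, since a homotopy is again such a finite package of data over $\A^1\times S$.

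Granting this, the $(\ZF$-linear$)$ framed presheaf structure on $\widetilde\cF$ is defined in the only possible way: for $\Phi\in\Fr^S_*(X,Y)$ represented by $\Phi_\alpha\in\Fr^{S_\alpha}_*(X_\alpha,Y_\alpha)$, the map $\Phi^*\colon\widetilde\cF(Y)\to\widetilde\cF(X)$ is the colimit over $\alpha$ of $\Phi_\alpha^*\colon\cF(Y_\alpha)\to\cF(X_\alpha)$ (here $\widetilde\cF(Y)=\varinjlim_\alpha\cF(Y_\alpha)$ by definition). One must check this is independent of the choice of representative $\Phi_\alpha$ and of the presentation of $Y$ as a limit, which follows from functoriality of $\cF$ and cofinality arguments in the indexing category; compatibility with composition of framed correspondences follows likewise, because composition is compatible with the colimit presentation of the $\Fr$-sets. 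Additivity in the $\ZF$-case is inherited termwise from $\cF$.

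Finally, the three properties transfer directly. Homotopy invariance: $\widetilde\cF(\A^1\times Y)=\varinjlim_\alpha\cF(\A^1\times Y_\alpha)$ and each $\pi_{Y_\alpha}^*$ is an isomorphism, so the colimit $\pi_Y^*$ is an isomorphism. Stability: $\sigma_Y$ is represented at each stage by $\sigma_{Y_\alpha}$, each $\sigma_{Y_\alpha}^*$ is an isomorphism, so $\sigma_Y^*=\varinjlim_\alpha\sigma_{Y_\alpha}^*$ is one. Likewise $h$-torsion: $h_Y$ is represented by $h_{Y_\alpha}$ and $h_{Y_\alpha}^*=0$ implies $h_Y^*=\varinjlim_\alpha h_{Y_\alpha}^*=0$. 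The main obstacle, and the only place where real work beyond bookkeeping is needed, is the descent/limit statement that $\Fr^S_*(X,Y)=\varinjlim_\alpha\Fr^{S_\alpha}_*(X_\alpha,Y_\alpha)$ together with the matching statement for $\A^1$-homotopies — in particular verifying that the equivalence relation defining $\Fr$ (existence of a common \'etale refinement $W$) is also detected at a finite stage, so that the colimit of the quotient sets is the quotient set of $S$. Once that is in place, everything else is formal.
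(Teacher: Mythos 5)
Your proposal is correct and follows essentially the same route as the paper: the paper's proof consists precisely of the limit statement $\Fr^{S}_*(X,Y) = \varinjlim \Fr^{S_\alpha}_*(X_\alpha,Y_\alpha)$ (cited to EGA IV) from which everything else is formal, exactly as you argue. Your additional remarks on descending the equivalence relation and the $\A^1$-homotopies to a finite stage fill in details the paper leaves implicit in ``The claim follows.''
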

\begin{proof}
Let $X,Y$ be smooth schemes over $S$. Then $S=\varprojlim S_\alpha, X=\varprojlim X_\alpha$ and $Y=\varprojlim Y_\alpha$ with all the transition morphisms being \etale and affine, $S_\alpha$ being smooth over $k$, $X_\alpha$ and $Y_\alpha$ being smooth over $S_\alpha$ and $X_\beta=S_\beta\times_{S_{\alpha}}X_\alpha$, $Y_\beta=S_\beta\times_{S_{\alpha}}Y_\alpha$ for the structure morphisms $S_\beta\to S_\alpha$. The base change gives rise to morphisms
\[
\Fr^{S_\alpha}_*(X_\alpha,Y_\alpha) \to \Fr^{S_\beta}_*(X_\beta,Y_\beta).
\]
One can show (cf. \cite{GD67}) that 
\[
\Fr^{S}_*(X,Y) = \varinjlim \Fr^{S_\alpha}_*(X_\alpha,Y_\alpha).
\]
The claim follows.
\end{proof}

\section{Geometric lemmas}

Recall that all the considered schemes are noetherian and separated, in particular, all the rings below are supposed to be noetherian.

\begin{lemma} \label{lm:Picard_rigidity}
Let $S=\Spec R$ be the spectrum of a Henselian local ring, $\relpcurve\to S$ be a projective morphism and $\cL_0, \cL_1$ be line bundles over $\relpcurve$. Denote $k=R/\mm$ the residue field and $\overline{C}=\relpcurve \times_{S} \Spec k$ the closed fiber. Suppose that there exists an isomorphism of line bundles $\cL_0|_{\overline{C}} \cong \cL_1|_{\overline{C}}$. Then for $n\in \NN$ invertible in $R$ there exists a line bundle $\cL$ over $\relpcurve$ such that $\cL^{\otimes n}\cong \cL_0\otimes \cL_1^{-1}$ and $\cL|_{\overline{C}}\cong \struct_{\overline{C}}$.
\end{lemma}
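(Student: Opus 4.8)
The plan is to use the rigidity of \'etale cohomology with finite coefficients to produce the $n$-th root after modifying $\cL_0\otimes\cL_1^{-1}$ so that it becomes trivial on the closed fiber. First I would set $\cM = \cL_0\otimes\cL_1^{-1}$; by hypothesis there is a chosen isomorphism $\cM|_{\overline C}\cong\struct_{\overline C}$, so $\cM$ defines a class in the relative Picard group, equivalently in the kernel of the restriction map $\Pic(\relpcurve)\to\Pic(\overline C)$. The goal is to show this kernel is $n$-divisible when $R$ is Henselian and $n\in R^*$; once we have $\cM\cong\cL^{\otimes n}$ for some $\cL$, the same argument (or a simple cocycle manipulation) can be used to arrange $\cL|_{\overline C}\cong\struct_{\overline C}$: indeed if $\cL|_{\overline C}$ is $n$-torsion-trivial, we can twist $\cL$ by a suitable line bundle pulled back from $S$ — but since $S$ is local $\Pic(S)=0$, so we instead use that $\cL^{\otimes n}|_{\overline C}\cong\struct_{\overline C}$ forces $\cL|_{\overline C}$ to be $n$-torsion, and then replace $\cL$ by $\cL\otimes(\text{a line bundle on }\relpcurve\text{ trivial on }\overline C\text{ whose }n\text{-th power kills that torsion})$, reducing again to $n$-divisibility of the relative Picard group. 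So everything comes down to that divisibility statement.

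The key step is the $n$-divisibility of $\ker(\Pic(\relpcurve)\to\Pic(\overline C))$. I would establish this through the Kummer sequence and proper base change / rigidity for \'etale cohomology. Consider the Kummer sequence $1\to\mu_n\to\Gm\xrightarrow{(-)^n}\Gm\to 1$ on $\relpcurve$ (valid since $n\in R^*$), giving an exact sequence
\[
\coh^1_\et(\relpcurve,\mu_n)\to\Pic(\relpcurve)\xrightarrow{n}\Pic(\relpcurve)\to\coh^2_\et(\relpcurve,\mu_n),
\]
and the analogous one on $\overline C$, compatibly via the closed immersion $\overline C\hookrightarrow\relpcurve$. The point is that $R$ being Henselian local with residue field $k$ makes the inclusion $\overline C\hookrightarrow\relpcurve$ induce isomorphisms on \'etale cohomology of torsion sheaves (proper base change for the Henselian pair $(R,\mm)$, cf. the proper base change theorem in SGA4 or the rigidity/specialization results for \'etale cohomology); in particular $\coh^i_\et(\relpcurve,\mu_n)\xrightarrow{\simeq}\coh^i_\et(\overline C,\mu_n)$ for all $i$. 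Now take $\cM\in\Pic(\relpcurve)$ with $\cM|_{\overline C}$ trivial. Its image in $\coh^2_\et(\relpcurve,\mu_n)$ maps to the image of $\cM|_{\overline C}=0$ in $\coh^2_\et(\overline C,\mu_n)$, which is $0$; by the isomorphism this image is already $0$ in $\coh^2_\et(\relpcurve,\mu_n)$, so $\cM$ is $n$-divisible in $\Pic(\relpcurve)$, i.e. $\cM\cong\cN^{\otimes n}$ for some $\cN$.

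It remains to clean up the closed-fiber condition on the root. Given $\cM\cong\cN^{\otimes n}$ and $\cM|_{\overline C}\cong\struct_{\overline C}$, the restriction $\cN|_{\overline C}$ is an $n$-torsion element of $\Pic(\overline C)$; via the Kummer sequence on $\overline C$ it lifts to a class $\beta\in\coh^1_\et(\overline C,\mu_n)$, and by the isomorphism $\coh^1_\et(\relpcurve,\mu_n)\xrightarrow{\simeq}\coh^1_\et(\overline C,\mu_n)$ this $\beta$ is the restriction of a unique $\widetilde\beta\in\coh^1_\et(\relpcurve,\mu_n)$, whose image $\cP\in\Pic(\relpcurve)$ satisfies $\cP^{\otimes n}\cong\struct_{\relpcurve}$ and $\cP|_{\overline C}\cong\cN|_{\overline C}$. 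Then $\cL=\cN\otimes\cP^{-1}$ satisfies $\cL^{\otimes n}\cong\cM\cong\cL_0\otimes\cL_1^{-1}$ and $\cL|_{\overline C}\cong\struct_{\overline C}$, as desired. The main obstacle is purely bookkeeping: correctly invoking proper base change to get $\coh^i_\et(\relpcurve,\mu_n)\cong\coh^i_\et(\overline C,\mu_n)$ in degrees $1$ and $2$ (this needs $\relpcurve\to S$ proper, which is given, and the Henselian hypothesis, which is exactly what makes $(\relpcurve,\overline C)$ a suitable pair), and then keeping the two Kummer sequences compatible so that triviality on $\overline C$ transfers back upstairs; no delicate estimate is involved, but one must be careful that $\Pic(\overline C)$ genuinely injects where needed and that the chosen isomorphism $\cM|_{\overline C}\cong\struct_{\overline C}$ is used consistently.
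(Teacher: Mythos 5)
Your proposal is correct and follows essentially the same route as the paper: the Kummer sequence on $\relpcurve$ and on $\overline{C}$ together with the proper base change isomorphisms $\coh^i_{\et}(\relpcurve,\mu_n)\xrightarrow{\simeq}\coh^i_{\et}(\overline{C},\mu_n)$ for $i=1,2$, followed by a diagram chase. The paper leaves that chase implicit, whereas you have written it out (first killing the obstruction in $\coh^2$ to get an $n$-th root $\cN$, then correcting $\cN$ by an $n$-torsion bundle $\cP$ coming from $\coh^1$ to trivialize the restriction to $\overline{C}$), and your chase is accurate.
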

\begin{proof}
A short exact sequence of \etale sheaves
\[
1\to \mu_n \to \Gm \xrightarrow{n} \Gm \to 1
\]
gives rise to the following commutative diagram with exact rows.
\[
\xymatrix{
\coh^1_{\et}(\relpcurve, \mu_n) \ar[r] \ar[d] & \coh^1_{\et}(\relpcurve, \Gm) \ar[r]^n \ar[d] & \coh^1_{\et}(\relpcurve, \Gm) \ar[r] \ar[d] & \coh^2_{\et}(\relpcurve, \mu_n) \ar[d] \\
\coh^1_{\et}(\overline{C}, \mu_n) \ar[r] & \coh^1_{\et}(\overline{C}, \Gm) \ar[r]^n & \coh^1_{\et}(\overline{C}, \Gm) \ar[r] & \coh^2_{\et}(\overline{C}, \mu_n)
}
\]
Both the side vertical maps are isomorphisms by \cite[Chapter~VI, Corollary~2.7]{Mi80}. Identifying the middle terms with the corresponding Picard groups we obtain 
\[
\xymatrix{
\coh^1_{\et}(\relpcurve, \mu_n) \ar[r] \ar[d]^\cong & \Pic(\relpcurve) \ar[r]^n \ar[d] & \Pic(\relpcurve) \ar[r] \ar[d] & \coh^2_{\et}(\relpcurve, \mu_n) \ar[d]^{\cong} \\
\coh^1_{\et}(\overline{C}, \mu_n) \ar[r] & \Pic(\overline{C}) \ar[r]^n & \Pic(\overline{C}) \ar[r] & \coh^2_{\et}(\overline{C}, \mu_n)
}
\]
The claim follows by diagram chase.
\end{proof}

\begin{lemma} \label{lm:Serre_vanishing}
Let $S=\Spec R$ be the spectrum of a ring, $\relpcurve\to S$ be a projective morphism, $Z\subset \relpcurve$ be a closed subscheme and $\mathcal{E}, \struct(1)$ be line bundles over $\relpcurve$ with $\struct(1)$ being very ample. Then there exists $N\in \mathbb{N}$ such that for every $N'\ge N$ and $a\in\Gamma(Z,\mathcal{E}(N')|_{Z})$ there exists $s\in\Gamma(\relpcurve,\mathcal{E}(N'))$ such that $s|_Z=a$.
\end{lemma}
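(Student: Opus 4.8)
The plan is to run the standard Serre-vanishing argument for the restriction map $\Gamma(\relpcurve,\mathcal{E}(N'))\to\Gamma(Z,\mathcal{E}(N')|_Z)$. Let $\mathscr{I}\subset\struct_{\relpcurve}$ be the ideal sheaf of the closed subscheme $Z$ and let $\iota\colon Z\hookrightarrow\relpcurve$ denote the closed immersion. First I would tensor the structure sequence $0\to\mathscr{I}\to\struct_{\relpcurve}\to\iota_*\struct_Z\to 0$ with the line bundle $\mathcal{E}(N')$; since $\mathcal{E}(N')$ is locally free this remains exact and yields $0\to\mathscr{I}\otimes\mathcal{E}(N')\to\mathcal{E}(N')\to\iota_*(\mathcal{E}(N')|_Z)\to 0$. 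Because $\iota$ is affine one has $\coh^i(\relpcurve,\iota_*(\mathcal{E}(N')|_Z))=\coh^i(Z,\mathcal{E}(N')|_Z)$, so the long exact cohomology sequence produces the exact segment
\[
\Gamma(\relpcurve,\mathcal{E}(N'))\to\Gamma(Z,\mathcal{E}(N')|_Z)\to\coh^1(\relpcurve,\mathscr{I}\otimes\mathcal{E}(N')).
\]
Thus it suffices to produce an $N$ for which the right-hand group vanishes for every $N'\ge N$, since surjectivity of the left-hand map is exactly the statement that every $a\in\Gamma(Z,\mathcal{E}(N')|_Z)$ extends to some $s\in\Gamma(\relpcurve,\mathcal{E}(N'))$.

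For that I would invoke Serre's vanishing theorem. Recall the standing assumption that all rings are noetherian, so $R$ is noetherian, $\relpcurve\to\Spec R$ is a projective morphism to a noetherian affine scheme, and $\struct(1)$, being very ample, is in particular relatively ample. The sheaf $\mathscr{I}\otimes\mathcal{E}$ is coherent, so Serre vanishing furnishes $N\in\mathbb{N}$ with $\coh^i(\relpcurve,(\mathscr{I}\otimes\mathcal{E})(N'))=0$ for all $i>0$ and all $N'\ge N$; in particular $\coh^1(\relpcurve,\mathscr{I}\otimes\mathcal{E}(N'))=0$ for such $N'$. By the displayed exact sequence the restriction map is then surjective, which is the assertion. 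If one prefers to avoid the relative formulation, one may instead embed $\relpcurve$ into $\PP^n_R$ via $\struct(1)$ and apply the same argument to the pushforward of $\mathscr{I}\otimes\mathcal{E}$ on $\PP^n_R$ twisted by $\struct_{\PP^n}(N')$.

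There is essentially no serious obstacle here; the only points to keep an eye on are that "very ample'' is indeed enough to feed into Serre vanishing (it is, since very ample implies ample, and over a projective scheme over a noetherian ring the vanishing theorem applies to any ample line bundle), and that it is the ambient noetherian hypothesis which makes the coherence of $\mathscr{I}\otimes\mathcal{E}$ and the finiteness/vanishing of the cohomology groups legitimate. Note also that no hypothesis on the relative dimension of $\relpcurve\to S$ is used.
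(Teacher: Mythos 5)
Your argument is correct and is essentially identical to the paper's own proof: both twist the ideal sheaf sequence $0\to\mathcal{I}\to\struct_{\relpcurve}\to i_*\struct_Z\to 0$ by $\mathcal{E}(N')$, take the long exact cohomology sequence, and kill $\coh^1(\relpcurve,\mathcal{I}\otimes\mathcal{E}(N'))$ by Serre's vanishing theorem. No issues.
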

\begin{proof}
Let $i\colon Z\to \relpcurve$ be the closed immersion and let $\mathcal{I}$ be the sheaf of ideals defining $Z$. For every $N'\in\NN$ the short exact sequence
\[
0\to \mathcal{I} \to \struct_{\relpcurve}\to i_*\struct_{Z} \to 0
\]
of coherent $\struct_{\relpcurve}$-modules gives rise to the exact sequence of cohomology groups
\[
\coh^0_\Zar(\relpcurve,\mathcal{E}(N')) \to \coh^0_\Zar(\relpcurve,i_*\mathcal{E}|_{Z}(N')) \to \coh^1_\Zar(\relpcurve,\mathcal{E}\otimes_{\struct_{\relpcurve}} \mathcal{I}(N')).
\]
By Serre's theorem \cite[Chapter~III, Theorem~5.2]{Ha77} there exists an integer $N$ such that for every $N'\ge N$ the rightmost term vanishes, whence the claim.
\end{proof}

\begin{corollary} \label{cor:section_invertible}
Let $S=\Spec R$ be the spectrum of a local ring, $\relpcurve\to S$ be a projective morphism of relative dimension $1$, let $\mathcal{E}, \struct(1)$ be line bundles over $\relpcurve$ with $\struct(1)$ being very ample and let $Z_1,Z_2\subset \relpcurve$ be closed subschemes finite over $S$ such that $Z_1\cap Z_2=\emptyset$. Then there exists $N\in \mathbb{N}$ such that for every $N'\ge N$ there exists $s\in\Gamma(\relpcurve,\mathcal{E}(N'))$ satisfying 
\begin{enumerate}
\item
$Z(s)\cap Z_1=\emptyset$;
\item
$Z_2\subset Z(s)$;
\item
$Z(s)$ is finite over $S$.
\end{enumerate}
\end{corollary}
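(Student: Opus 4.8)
The plan is to produce $s$ by prescribing its restriction to a well-chosen finite closed subscheme $Z'\subset\relpcurve$ and then lifting that restriction to a global section via Lemma~\ref{lm:Serre_vanishing}. Conditions~(1) and~(2) will be built into the prescribed restriction; the only real content is condition~(3), the finiteness of $Z(s)$ over $S$, which I will force by also prescribing that $s$ be nonzero at one point on each one-dimensional component of the closed fibre.

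First I would fix the combinatorial data. Write $k=R/\mm$ and $\overline{C}=\relpcurve\times_S\Spec k$, and let $\overline{C}_1,\dots,\overline{C}_r$ be the irreducible components of $\overline{C}$ of dimension $1$, each taken with its reduced structure, so that each $\overline{C}_i$ is an integral projective curve over $k$. Since $Z_1$ and $Z_2$ are finite over $S$, the sets $Z_1\cap\overline{C}_i$ and $Z_2\cap\overline{C}_i$ are finite, and a projective curve over any field has infinitely many closed points; hence on each $\overline{C}_i$ I can choose a closed point $p_i$ lying in none of $Z_1$, $Z_2$, $\overline{C}_j$ ($j\neq i$), with the $p_i$ pairwise distinct. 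Then $Z'=Z_1\sqcup Z_2\sqcup\{p_1\}\sqcup\dots\sqcup\{p_r\}$, each $\{p_i\}$ carrying its reduced structure, is a closed subscheme of $\relpcurve$ all of whose pieces are pairwise disjoint, so that $\Gamma(Z',\mathcal{E}(N')|_{Z'})$ is the product of the groups of sections of $\mathcal{E}(N')$ over the individual pieces. (If $r=0$ the argument simply proceeds with $Z'=Z_1\sqcup Z_2$.)

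Now I would apply Lemma~\ref{lm:Serre_vanishing} to the closed subscheme $Z'$ to obtain the integer $N$, and fix any $N'\ge N$. On $Z_1$, which is finite over the local ring $R$ and hence semilocal, the line bundle $\mathcal{E}(N')|_{Z_1}$ is trivial, so it admits a nowhere-vanishing section $a_1$; on $Z_2$ take $a_2=0$; at each $p_i$ take a nonzero element $a_{p_i}$ of the one-dimensional fibre $\mathcal{E}(N')\otimes k(p_i)$. Assemble these into $a\in\Gamma(Z',\mathcal{E}(N')|_{Z'})$ and lift via Lemma~\ref{lm:Serre_vanishing} to $s\in\Gamma(\relpcurve,\mathcal{E}(N'))$; then $s|_{Z_1}$ is nowhere zero, $s|_{Z_2}=0$, and $s(p_i)\neq 0$ for every $i$. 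Conditions~(1) and~(2) are immediate from $Z(s)\cap Z_1=Z(s|_{Z_1})=\emptyset$ and $s|_{Z_2}=0$. For~(3): $Z(s)\to S$ is projective, hence proper, as a closed subscheme of $\relpcurve$; its closed fibre is $Z(s)\cap\overline{C}$, and for each $i$ the section $s$ does not vanish identically on the integral curve $\overline{C}_i$ since $s(p_i)\neq 0$, so $Z(s)\cap\overline{C}_i$ is a proper closed subscheme of $\overline{C}_i$, hence zero-dimensional; as $\overline{C}$ has no component of dimension greater than $1$, the closed fibre $Z(s)\cap\overline{C}$ is finite. Finally, a proper morphism to the spectrum of a local ring with zero-dimensional closed fibre is quasi-finite: the locus in $Z(s)$ at which the fibre of $Z(s)\to S$ has dimension $\ge 1$ is closed, its image in $S$ is closed and avoids the closed point, hence is empty; and a proper quasi-finite morphism is finite. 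This yields~(3).

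The main obstacle is exactly condition~(3): arranging that $Z(s)$ be finite over $S$ and not merely proper. The point of the argument is that finiteness only needs to be checked on the closed fibre (everything else is transported there by properness and the locality of $S$), and that this can be secured by the elementary device of enlarging the subscheme $Z_1\sqcup Z_2$ we lift from by one witness point per one-dimensional component of $\overline{C}$, thereby avoiding any genericity or Bertini-type argument over the non-field base $S$.
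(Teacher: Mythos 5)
Your proof is correct and follows essentially the same route as the paper's: choose one closed point on each one-dimensional component of the closed fibre away from $Z_2$, prescribe the section to be nowhere vanishing on the semilocal scheme $Z_1$, zero on $Z_2$, and nonzero at the chosen points, lift via Lemma~\ref{lm:Serre_vanishing}, and deduce finiteness of $Z(s)$ from its being projective with finite closed fibre. Your added detail on why proper with zero-dimensional closed fibre over a local base implies quasi-finite is a welcome elaboration of a step the paper leaves implicit.
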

\begin{proof}
The scheme $Z_1$ is finite over $S$ thus semilocal. It follows that for every $n\in\mathbb{N}$ the line bundle $\mathcal{E}(n)|_{Z_1}$ is trivial whence it has a nowhere vanishing section $a_n\in\Gamma(Z_1,\mathcal{E}(n)|_{Z_1})$. Let $\overline{C}_1,\overline{C}_2,\hdots,\overline{C}_l$ be the irreducible components of the closed fiber of $\relpcurve$. Choose a collection of closed points $y_1\in \overline{C}_1-Z_2,y_2\in \overline{C}_2-Z_2,\hdots, y_l\in \overline{C}_l-Z_2$. Lemma~\ref{lm:Serre_vanishing} yields that there exists $N\in\mathbb{N}$ such that for every $N'\ge N$ there exists $s\in\Gamma(\relpcurve,\mathcal{E}(N'))$ satisfying $s|_{Z_1}=a_n$, $s|_{Z_2}=0$, $s|_{y_i}\neq 0$, $1\le i\le l$. The closed subset $Z(s)$ is projective over $S$ and its closed fiber is finite since it does not contain the irreducible components of the closed fiber of $\relpcurve$. Then $Z(s)$ is quasi-finite and projective over $S$ whence finite. The claim follows.
\end{proof}

\begin{definition} \label{def:fine_compactification}
Let $S=\Spec R$ be the spectrum of a local ring and $\relcurve\to S$ be a smooth morphism of relative dimension $1$. We say that $(\relcurve\subset\relpcurve,\struct(1))$ with $\relcurve$ being open and dense in $\relpcurve$ and $\struct(1)$ being a very ample line bundle over $\relpcurve$  is a \textit{fine compactification of $\relcurve$ over $S$} if there exists $\zeta_\infty\in \Gamma(\relpcurve,\struct(1))$ such that 
\begin{enumerate}
\item
$\relcurve=\relpcurve-Z(\zeta_\infty)$;
\item
$Z(\zeta_\infty)$ is finite over $S$.
\end{enumerate}
\end{definition}

\begin{remark}
Up to a choice of coordinates on $\A^d_S$ a fine compactification of $\relcurve\to S$ is a closed embedding $\relcurve\to \A^d_S$ such that $\relpcurve-\relcurve$ is finite over $S$ with $\relpcurve$ being the projective closure of $\relcurve$.
\end{remark}

\begin{lemma} \label{lm:compactify_subcurve}
Let $S=\Spec R$ be the spectrum of a local ring, $\relcurve\to S$ be a smooth morphism of relative dimension $1$ and $\relcurve'\subset \relcurve$ be a Zariski neighborhood of a closed point $y\in \relcurve$ such that $\relcurve-\relcurve'$ is quasi-finite over $S$. Suppose that there exists an dense open immersion $\relcurve\subset \relpcurve$ with $\relpcurve$ being projective over $S$ and $\relpcurve-\relcurve$ being finite over $S$. Then there exists a Zariski neighborhood $\relcurve''\subset \relcurve'$ of $y$ admitting a fine compactification.
\end{lemma}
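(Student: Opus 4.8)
The plan is to keep the given projective closure $\relpcurve$ as the compactifying scheme and to cut out the required curve $\relcurve''$ as the complement of the zero locus of a global section of a high power of a very ample bundle on $\relpcurve$, the section being supplied by Corollary~\ref{cor:section_invertible}. So I would first reduce everything to producing one good section on $\relpcurve$, and then read off the fine compactification.

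The only point that genuinely needs an argument is that $\relpcurve\setminus\relcurve'$ is finite over $S$. Since $\relcurve'$ is open in $\relcurve$ and $\relcurve$ is open in $\relpcurve$, the complement $\relpcurve\setminus\relcurve'$ is closed in $\relpcurve$, hence proper over $S$. For every $s\in S$ the fibre $(\relpcurve\setminus\relcurve')_s$ is contained in $(\relpcurve\setminus\relcurve)_s\cup(\relcurve\setminus\relcurve')_s$; the first set is finite because $\relpcurve\setminus\relcurve$ is finite over $S$, and the second is finite because $\relcurve\setminus\relcurve'$ is quasi-finite over $S$. Thus $\relpcurve\setminus\relcurve'\to S$ is quasi-finite, and a proper quasi-finite morphism is finite. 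Equip $\relpcurve\setminus\relcurve'$ with the reduced closed subscheme structure and call it $Z_2$; it is a closed subscheme of $\relpcurve$, finite over $S$, with $y\notin Z_2$ since $y\in\relcurve'$.

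Next I would fix a very ample line bundle $\struct(1)$ on the projective $S$-scheme $\relpcurve$ and apply Corollary~\ref{cor:section_invertible} with $\mathcal{E}=\struct_{\relpcurve}$, with $Z_1=\{y\}$ (the reduced point), and with $Z_2$ as above; here $Z_1\cap Z_2=\emptyset$ because $y\in\relcurve'$. This yields $N\in\NN$ and a section $\zeta_\infty\in\Gamma(\relpcurve,\struct(N))$ with $Z(\zeta_\infty)\cap\{y\}=\emptyset$, with $Z_2\subseteq Z(\zeta_\infty)$, and with $Z(\zeta_\infty)$ finite over $S$. Set $\relcurve''=\relpcurve\setminus Z(\zeta_\infty)$. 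Then $\relcurve''$ is open in $\relpcurve$; the inclusion $Z_2\subseteq Z(\zeta_\infty)$ forces $\relcurve''\subseteq\relcurve'$, and $y\notin Z(\zeta_\infty)$ gives $y\in\relcurve''$. Finally $(\relcurve''\subset\relpcurve,\struct(N))$ is a fine compactification in the sense of Definition~\ref{def:fine_compactification}: $\struct(N)$ is very ample, $\zeta_\infty$ is a section of it with $\relcurve''=\relpcurve\setminus Z(\zeta_\infty)$ and $Z(\zeta_\infty)$ finite over $S$, and $\relcurve''$ is dense in $\relpcurve$ because $\relcurve\setminus\relcurve''=\relcurve\cap Z(\zeta_\infty)$ is finite over $S$, hence of fibre dimension $0$ over $S$, hence contains no irreducible component of $\relcurve$, and $\relcurve$ is dense in $\relpcurve$ by hypothesis.

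I expect the only real obstacle to be the finiteness of $\relpcurve\setminus\relcurve'$, and it turns out not to be serious: it is exactly the statement that a proper quasi-finite morphism is finite, applied after one notices that the two set-theoretic pieces $\relpcurve\setminus\relcurve$ and $\relcurve\setminus\relcurve'$ are respectively finite and quasi-finite over $S$. The remaining care is bookkeeping — density of $\relcurve''$ and the fact that $Z(\zeta_\infty)\supseteq\relpcurve\setminus\relcurve'$ forces $\relcurve''\subseteq\relcurve'$. One small caveat: if one reads the hypothesis of Corollary~\ref{cor:section_invertible} literally, one should observe that its proof only uses that $Z_1$ is semilocal, which holds for $Z_1=\{y\}=\Spec\kappa(y)$; alternatively, in the intended applications $y$ lies in the closed fibre, where $\{y\}$ is genuinely finite over $S$.
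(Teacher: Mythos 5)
Your proof is correct and follows essentially the same route as the paper: observe that $\relpcurve-\relcurve'$ is proper and quasi-finite over $S$, hence finite, apply Corollary~\ref{cor:section_invertible} to produce $\zeta_\infty$, and set $\relcurve''=\relpcurve-Z(\zeta_\infty)$. You merely spell out the details (finiteness of the complement, density of $\relcurve''$, and the semilocality caveat for $Z_1=\{y\}$) that the paper leaves implicit.
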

\begin{proof}
Let $\struct(1)$ be a very ample line bundle over $\relpcurve$. By the assumption $D=\relpcurve-\relcurve'$ is quasi-finite over $S$ whence finite. Corollary~\ref{cor:section_invertible} yields that there exists $N\in\mathbb{N}$ and $\zeta_\infty\in\Gamma(\relpcurve,\struct(N))$ such that $D\subset Z(\zeta_\infty)$, $\zeta_{\infty}|_y\neq 0$ and $Z(\zeta_\infty)$ is finite over $S$. Set $\relcurve''=\relpcurve-Z(\zeta_\infty)$. It follows that $(\relcurve''\subset \relpcurve,\struct(N))$ is a fine compactification of $\relcurve''$.
\end{proof}

\begin{lemma} \label{lm:sections}
Let $S=\Spec R$ be the spectrum of a Henselian local ring, $\relcurve\to S$ be a smooth morphism of relative dimension $1$ and $r_0,r_1\colon S\to \relcurve$ be morphisms of $S$-schemes such that $r_0(x)=r_1(x)$ for the closed point $x\in S$. Let $(\relcurve\subset \relpcurve,\struct(1)) $ be a fine compactification of $\relcurve$, set $D=\relpcurve-\relcurve$ and denote $\overline{C}=\relpcurve \times_{S} \{x\}$ the closed fiber. For $\cL_0=\struct_{\relpcurve}(r_0(S)),\, \cL_1=\struct_{\relpcurve}(r_1(S))$ and an integer $n\in\NN$ invertible in $R$ let $\cL$ be a line bundle over $\relpcurve$ and $\Theta\colon\cL_0\cong \cL_1\otimes \cL^{\otimes n}$, $\theta\colon \cL|_{\overline{C}}\xrightarrow{\simeq} \struct_{\overline{C}}$ be isomorphisms given by Lemma~\ref{lm:Picard_rigidity}. Then there exists $N\in \mathbb{N}$ and sections
\[
s_0\in\Gamma(\relpcurve,\cL_0),\quad s_1\in\Gamma(\relpcurve,\cL_1),\quad  \zeta\in\Gamma(\relpcurve,\cL_0(nN)),\quad \xi_0,\tau_0\in\Gamma(\relpcurve,\struct(N)),\quad \xi_1,\tau_1\in \Gamma(\relpcurve,\cL(N))
\]
such that
\begin{enumerate}
\item
$Z(s_0)=r_0(S)$, $Z(s_1)=r_1(S)$;
\item
$D\subset Z(\zeta)$, $Z(s_0)\cap Z(\zeta)=Z(s_1)\cap Z(\zeta)=\emptyset$;
\item
$Z(\xi_0)\cap Z(s_0) = Z(\xi_1)\cap Z(s_1) = Z(\xi_0)\cap Z(\zeta)=Z(\xi_1)\cap Z(\zeta)=\emptyset$;
\item
$(s_0\otimes \xi_0^{\otimes n})|_{\overline{C}} = \Theta(nN)^* ( s_1\otimes \xi_1^{\otimes n})|_{\overline{C}}$;
\item
$(s_0\otimes \xi_0^{\otimes n})|_{Z(\zeta)}= \Theta(nN)^*(s_1\otimes \xi_1^{\otimes n})|_{Z(\zeta)}$;
\item
$Z(\zeta)\cup Z(s_0)\subset Z(\tau_0)$, $Z(\zeta)\cup Z(s_1)\subset Z(\tau_1)$, $Z(\xi_0)\cap Z(\tau_0)=Z(\xi_1)\cap Z(\tau_1)=\emptyset$;
\item
the restriction map $\Gamma(\relpcurve,\struct(N))\to \Gamma(Z(\zeta)\cup Z(s_0)\cup\overline{C},\struct(N)|_{Z(\zeta)\cup Z(s_0)\cup\overline{C}})$ is surjective;
\item
$Z(\zeta), Z(\xi_0), Z(\xi_1), Z(\tau_0)$ and $Z(\tau_1)$ are finite over $S$.
\end{enumerate}
Here $Z(s)$ is the vanishing locus of a section $s$.
\end{lemma}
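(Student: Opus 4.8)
The plan is to construct the sections one family at a time, starting from the canonical divisorial sections and then repeatedly invoking Corollary~\ref{cor:section_invertible} and Lemma~\ref{lm:Serre_vanishing} to produce auxiliary sections with prescribed zero loci and prescribed restrictions, choosing $N$ large enough at the end so that all the Serre-vanishing bounds are satisfied simultaneously. First I would take $s_0\in\Gamma(\relpcurve,\cL_0)$ and $s_1\in\Gamma(\relpcurve,\cL_1)$ to be the tautological sections cutting out the effective Cartier divisors $r_0(S)$ and $r_1(S)$; these immediately give (1). Since $\relcurve\subset\relpcurve$ is a fine compactification, $D=\relpcurve-\relcurve$ is finite over $S$, and $r_0(S),r_1(S)$ are sections of $\relcurve\to S$ hence closed subschemes of $\relcurve$ disjoint from $D$; applying Corollary~\ref{cor:section_invertible} with $\mathcal{E}=\cL_0(-nN)\otimes\struct(1)^{?}$ suitably twisted — more precisely, choosing $\zeta$ so that $Z(\zeta)\supset D$, $Z(\zeta)\cap(Z(s_0)\cup Z(s_1))=\emptyset$ and $Z(\zeta)$ is finite over $S$ — produces $\zeta\in\Gamma(\relpcurve,\cL_0(nN))$ for all large $N$, giving (2). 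This uses that over the semilocal scheme $Z_1=Z(s_0)\cup Z(s_1)$ the relevant twist of $\mathcal{E}$ is trivial, so the nowhere-vanishing section $a_n$ in the proof of Corollary~\ref{cor:section_invertible} exists.

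Next I would produce $\xi_0\in\Gamma(\relpcurve,\struct(N))$ and $\xi_1\in\Gamma(\relpcurve,\cL(N))$. The key constraint is the compatibility condition (4)--(5): on the closed fiber $\overline{C}$ and on $Z(\zeta)$ the sections $s_0\otimes\xi_0^{\otimes n}$ and $\Theta(nN)^*(s_1\otimes\xi_1^{\otimes n})$ must agree. The strategy is to use $\theta\colon\cL|_{\overline{C}}\xrightarrow{\simeq}\struct_{\overline{C}}$ to transport everything to $\overline{C}$, pick $\xi_0|_{\overline{C}}$ and $\xi_1|_{\overline{C}}$ to be nowhere-vanishing sections of $\struct(N)|_{\overline{C}}$ related by the prescribed identification so that the displayed equation holds on $\overline{C}$ (possible after scaling one of them by a unit, since $\overline{C}$ is a projective curve over a field and the ratio of the two sides is a regular function), and similarly on the semilocal scheme $Z(\zeta)$; then I would glue these prescribed boundary values to global sections using Lemma~\ref{lm:Serre_vanishing} applied to the closed subscheme $Z(\zeta)\cup\overline{C}$ (resp. an enlargement of it), which is legitimate since $Z(\zeta)\cup\overline{C}$ is proper over the residue field in the relevant directions and Serre vanishing gives surjectivity of the restriction for $N\gg0$. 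The disjointness conditions in (3) are arranged by the same device as in Corollary~\ref{cor:section_invertible}: prescribe $\xi_0,\xi_1$ to be nonzero at a point on each irreducible component of $\overline{C}$ not lying in $Z(s_0)\cup Z(s_1)\cup Z(\zeta)$, which forces $Z(\xi_i)$ to be finite over $S$ and avoids the other loci.

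Then $\tau_0,\tau_1$ are produced by one more application of Corollary~\ref{cor:section_invertible}: take $\tau_0\in\Gamma(\relpcurve,\struct(N))$ with $Z(\tau_0)\supset Z(\zeta)\cup Z(s_0)$, $Z(\tau_0)\cap Z(\xi_0)=\emptyset$, $Z(\tau_0)$ finite over $S$, and symmetrically $\tau_1$; this gives (6). Condition (7), surjectivity of $\Gamma(\relpcurve,\struct(N))\to\Gamma(Z(\zeta)\cup Z(s_0)\cup\overline{C},\struct(N)|_{\cdots})$, is exactly Lemma~\ref{lm:Serre_vanishing} for the closed subscheme $Z(\zeta)\cup Z(s_0)\cup\overline{C}$, valid for $N$ large. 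Finally (8) holds by construction since each of $Z(\zeta),Z(\xi_0),Z(\xi_1),Z(\tau_0),Z(\tau_1)$ was arranged to be quasi-finite and projective, hence finite, over $S$, exactly as in Corollary~\ref{cor:section_invertible}. The last step is to let $N$ be the maximum of the finitely many Serre-vanishing thresholds appearing above, and observe that every prescription made for a single $N$ can be made uniformly for all $N'\ge N$ by twisting the prescribed boundary data by a fixed nowhere-vanishing section.

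The main obstacle I anticipate is the bookkeeping of the compatibility conditions (4)--(5) together with the twist by $\Theta(nN)^*$: one must be careful that the isomorphism $\Theta\colon\cL_0\cong\cL_1\otimes\cL^{\otimes n}$, after twisting by $\struct(nN)$ and restricting to $\overline{C}$ and to $Z(\zeta)$, is compatible with the chosen trivializations $\theta$ and with the nowhere-vanishing sections of $\struct(N)$ on those loci, so that the two sides of (4) and (5) can genuinely be made equal rather than merely equal up to a unit. Resolving this amounts to first choosing $\xi_0$ freely (subject to the disjointness and finiteness constraints) and then \emph{defining} the required boundary value of $\xi_1$ on $\overline{C}\cup Z(\zeta)$ by the equation, checking that this boundary value is a nowhere-vanishing section of $\cL(N)|_{\overline{C}\cup Z(\zeta)}$ — which it is, because $s_0,\xi_0,s_1$ are all nonvanishing there by (2)--(3) and $n$ is invertible — and only then globalizing $\xi_1$ via Serre vanishing. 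Everything else is a routine iteration of the two cited geometric lemmas.
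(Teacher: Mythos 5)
Your overall architecture (tautological sections for (1), repeated use of Corollary~\ref{cor:section_invertible} and Lemma~\ref{lm:Serre_vanishing} for the disjointness, finiteness and surjectivity conditions, a final choice of $N$ dominating all Serre thresholds, and twisting by nowhere-vanishing sections to uniformize degrees) matches the paper's proof. But there is a genuine gap at the heart of conditions (4)--(5). The equation $(s_0\otimes \xi_0^{\otimes n})=\Theta(nN)^*(s_1\otimes\xi_1^{\otimes n})$ determines only $\xi_1^{\otimes n}$ on $\overline{C}\cup Z(\zeta)$, so to ``define the boundary value of $\xi_1$ by the equation'' you must extract an $n$-th root of a nowhere-vanishing section, i.e.\ of a unit in $\Gamma(\overline{C}\cup Z(\zeta),\struct^*)$. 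Non-vanishing and invertibility of $n$ do not give this: units need not be $n$-th powers (already on $\overline{C}$, where the relevant unit is the ratio $c=s_0/\Theta^*s_1$, a unit because the two divisors restrict to the same divisor on the closed fiber). Your proposed fix --- scaling $\xi_0|_{\overline{C}}$ or $\xi_1|_{\overline{C}}$ by a unit --- cannot remove the obstruction, since rescaling $\xi_i$ changes $c$ only by $n$-th powers, and the class of $c$ modulo $n$-th powers of units is exactly the obstruction.

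The paper resolves this in two steps that are absent from your plan. First, before anything else, it replaces $s_1$ by $\alpha s_1$ for a suitable $\alpha\in R^*$ lifting $\overline\alpha=c\in(R/\mm)^*$, so that $s_0|_{\overline{C}}$ and the transported $s_1|_{\overline{C}}$ become literally equal; then on $\overline{C}$ one may simply take $\xi_0|_{\overline{C}}=(\theta^{-1})^*\xi_1|_{\overline{C}}$ and no root extraction is needed there. Second, for condition (5) it uses that $Z(\zeta')\cup D$ is finite over the Henselian local $S$, hence a finite product of Henselian local rings whose closed points lie in $\overline{C}$; since $n\in R^*$, Hensel's lemma applied to $T^n-u$ lifts the $n$-th root already chosen on the closed fiber to all of $Z(\zeta')\cup D$, producing the section $a$ with $s_0\otimes a^{\otimes n}=\Theta^*(s_1\otimes\xi_1'^{\otimes n})$ there, compatibly with the closed-fiber choice; only then is $\xi_0$ globalized via Lemma~\ref{lm:Serre_vanishing}. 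This is the one place where the Henselian hypothesis and the invertibility of $n$ actually enter the lemma, and your write-up would need to incorporate both the unit normalization of $s_1$ and the Hensel lifting to be complete.
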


\begin{proof}
\textbf{(a) $\mathbf{s_0}$ and $\mathbf{s_1}$ compatible over $\mathbf{\overline{C}}$.} The line bundles $\cL_0,\cL_1$ have canonical sections $s_0\in\Gamma(\relpcurve,\cL_0) ,s_1'\in\Gamma(\relpcurve,\cL_1)$ such that $Z(s_0)=r_0(S)$ and $Z(s_1')=r_1(S)$. The isomorphisms $\Theta$ and $\theta$ induce an isomorphism 
\[
\id\otimes \theta^{\otimes n}\circ\Theta|_{\overline{C}}\colon \cL_0|_{\overline{C}}\xrightarrow{\simeq} \cL_1|_{\overline{C}}.
\]
By the assumption $\divisor_0 s_0|_{\overline{C}}=\divisor_0 s_1'|_{\overline{C}}$ whence $s_0|_{\overline{C}}=\overline{\alpha}(\id\otimes \theta^{\otimes n}\circ\Theta|_{\overline{C}})^*s_1'|_{\overline{C}}$ for some $\overline{\alpha}\in (R/\mm)^*$ where $\mm$ is the maximal ideal of $R$. Choose $\alpha\in R^*$ such that $\alpha = \overline{\alpha}\mmod \mm$ and set $s_1=\alpha s_1'$. By the above we have
\[
Z(s_0)=r_0(S),\quad Z(s_1)=r_1(S),\quad s_0|_{\overline{C}}=(\id\otimes \theta^{\otimes n}\circ\Theta|_{\overline{C}})^*s_1|_{\overline{C}}.
\]

\textbf{(b) $\mathbf{\zeta}'$ and $\mathbf{\zeta_{\infty}}$.} By Corollary~\ref{cor:section_invertible} there exists $N_1\in\mathbb{N}$ and $\zeta'\in\Gamma(\relpcurve,\cL_0(nN_1))$ such that 
\[
Z(s_0)\cap Z(\zeta')=Z(s_1)\cap Z(\zeta')=\emptyset.
\]
Choose $\zeta_\infty\in\Gamma(\relpcurve,\struct(1))$ such that $Z(\zeta_\infty)^\red=D$.

\textbf{(c) $\mathbf{\xi_0}'$, $\mathbf{\xi_1'}$ and $\mathbf{\xi}$.} By Lemma~\ref{lm:Serre_vanishing} and Corollary~\ref{cor:section_invertible} there exists $N_2\in\mathbb{N}$ such that the restriction 
\[
\Gamma(\relpcurve,\struct(N_2))\to \Gamma(Z(\zeta')\cup D\cup\overline{C}, \struct(N_2)|_{Z(\zeta')\cup D\cup\overline{C}})
\]
is surjective and there exists $\xi_1'\in \Gamma(\relpcurve,\cL(N_2))$ satisfying
\[
Z(\xi_1')\cap Z(s_0)=Z(\xi_1')\cap Z(s_1)=Z(\xi_1')\cap D=Z(\xi_1')\cap Z(\zeta')=\emptyset.
\]
The scheme $Z(\zeta)\cup D$ is finite over $S$ thus it is the spectrum of a direct product of Henselian local rings \cite[Theorem~4.2, Corollary~4.3]{Mi80}. Since by the construction $s_0$ and $s_1$ coincide on the closed fiber (up to the isomorphism of the corresponding bundles) then it follows that there exists $a\in \Gamma(Z(\zeta')\cup D, \struct(N_2)|_{Z(\zeta')\cup D})$ such that
\[
a|_{(Z(\zeta')\cup D)\cap\overline{C}}=((\theta^{-1})^*\xi_1'|_{\overline{C}})|_{(Z(\zeta')\cup D)\cap\overline{C}}, \quad s_0|_{Z(\zeta')\cup D} \otimes a^{\otimes n} = \Theta(nN)^*(s_1\otimes (\xi_1')^{\otimes n})|_{Z(\zeta')\cup D}.
\]
Choose $\xi_0'\in \Gamma(\relpcurve,\struct(N_2))$ such that $\xi_0'|_{\overline{C}}=(\theta^{-1})^*\xi_1'|_{\overline{C}}$ and $\xi_0'|_{Z(\zeta')\cup D}=a$. Then
\[
(s_0\otimes (\xi'_0)^{\otimes n})|_{\overline{C}} = \Theta(nN)^* ( s_1\otimes (\xi'_1)^{\otimes n})|_{\overline{C}},\quad
(s_0\otimes (\xi'_0)^{\otimes n})|_{Z(\zeta)\cup D}= \Theta(nN)^*(s_1\otimes (\xi'_1)^{\otimes n})|_{Z(\zeta)\cup D}.
\]
It follows that $Z(\xi_0')\cap Z(s_0)=Z(\xi_0')\cap Z(s_1)=Z(\xi_0')\cap D=Z(\xi_0')\cap Z(\zeta')=\emptyset$ since the same holds for $\xi_1'$ and the closed points of the intersections belong to $\overline{C}$. By Corollary~\ref{cor:section_invertible} there exists $N_3\in\mathbb{N}$ and $\xi\in\Gamma(\relpcurve,\struct(N_3))$ such that 
\[
Z(\xi)\cap D =Z(\xi)\cap Z(\zeta')=Z(\xi)\cap Z(s_0)=Z(\xi)\cap Z(s_1)=\emptyset.
\]

\textbf{(d) $\mathbf{N}$, $\mathbf{\tau_0'}$ and $\mathbf{\tau_1'}$.}
By Lemma~\ref{lm:Serre_vanishing} and Corollary~\ref{cor:section_invertible} there exists $N\ge \max(N_1,N_2)$ such that $N= N_2\mmod N_3$, the restriction 
\[
\Gamma(\relpcurve,\struct(N))\to \Gamma(Z(\zeta')\cup D\cup Z(s_0)\cup \overline{C},\struct(N)|_{Z(\zeta')\cup D \cup Z(s_0)\cup\overline{C}})
\]
is surjective and there exist 
\[
\tau_0'\in\Gamma(\relpcurve,\cL_0^{-1}\otimes\cL_0(nN_1)^{-1}\otimes\struct(N-1)),\quad \tau_1'\in\Gamma(\relpcurve,\cL_1^{-1}\otimes\cL_0(nN_1)^{-1}\otimes\cL(N-1))
\]
satisfying
\[
Z(\xi_0')\cap Z(\tau_0')=Z(\xi)\cap Z(\tau_0')=Z(\xi_1')\cap Z(\tau_1')=Z(\xi)\cap Z(\tau_1')=\emptyset.
\]

\textbf{(e) $\mathbf{\zeta}$, $\mathbf{\xi_0}$, $\mathbf{\xi_1}$, $\mathbf{\tau_0}$ and $\mathbf{\tau_1}$.} Set
\begin{gather*}
\zeta=\zeta'\otimes \zeta_\infty^{\otimes n(N-N_1)},\quad\xi_0=\xi_0'\otimes \xi^{\otimes \tfrac{N-N_2}{N_3}},\quad \xi_1=\xi_1'\otimes \xi^{\otimes \tfrac{N-N_2}{N_3}},\\
\tau_0=s_0\otimes\zeta'\otimes\tau_0'\otimes \zeta_\infty,\quad \tau_1=s_1\otimes\zeta'\otimes\tau_1'\otimes \zeta_\infty.
\end{gather*}
The zero loci of the sections constructed with the usage of Corollary~\ref{cor:section_invertible} are finite over $S$. The claim follows.
\end{proof}

\begin{lemma} \label{lm:construct_compactification}
Let $k$ be a field and $p\colon U\to \A^d_k$ be an \etale neighborhood of $\{0\}$. Set 
\[
S_d=\Spec \struct_{\A^d_k,0}^h,\quad S_{d-1}=\Spec \struct_{\A^{d-1}_k,0}^h,
\]
let $r_d\colon S_{d-1}\to S_d$ be the morphism induced by the embedding $\A^{d-1}_k=\A^{d-1}_k\times\{0\}\subset \A^d_k$ and $j\colon S_d\to U$ be the structure morphism. Then there exists a smooth morphism $\rho\colon\cU\to S_{d-1}$ of relative dimension $1$, a morphism $f\colon \cU\to U$ and a morphism $g\colon S_d\to \cU$ such that
\begin{enumerate}
\item
$\cU$ admits a fine compactification over $S_{d-1}$;
\item
$j=f\circ g$ and $\rho\circ g\circ r_d = \id_{S_{d-1}}$.
\end{enumerate}
\[
\xymatrix @C=3pc{
S_d \ar[r]_(0.6){g} \ar@/^1.0pc/[rrr]^{j} & \cU \ar[rr]_{f} \ar[d]^{\rho} & & U \ar[d]^p \\
& S_{d-1} \ar[ul]^{r_d} & & \A^d_k 
}
\]
\end{lemma}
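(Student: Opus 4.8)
\emph{Overall strategy.} I would slice $U$ along its last coordinate to produce a relative curve over $S_{d-1}$ containing (a neighbourhood of) $S_d$, and then compactify it by reducing to Lemma~\ref{lm:compactify_subcurve}.

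\emph{Reduction and construction of $\cU,f,g,\rho$.} Since $j$ factors through $\Spec\struct_{U,u_0}\to U$, where $u_0:=r(0)$ is the marked point of the \'etale neighbourhood, it factors through every Zariski neighbourhood of $u_0$; so I may shrink $U$ freely, taking it affine and connected (hence integral, as it is smooth over $k$) and, using that $p$ is standard \'etale near $u_0$, presented as a closed subscheme $U\cong D(g)\cap V(f)$ of $\A^1_w\times V$ for an affine open $0\in V\subset\A^d_k$ with $f\in\struct(V)[w]$ monic in $w$ of some degree $e$. Write $\A^d_k=\A^{d-1}_k\times\A^1_{x_d}$ with projection $\pi$, so that $\pi\circ p\colon U\to\A^{d-1}_k$ is smooth of relative dimension $1$. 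Put $\cU:=U\times_{\A^{d-1}_k}S_{d-1}$ (affine), with $f\colon\cU\to U$ the projection and $\rho\colon\cU\to S_{d-1}$ the structure map, which is smooth of relative dimension $1$ and surjective (its image is open and contains the closed point of the local scheme $S_{d-1}$). Let $q\colon S_d\to S_{d-1}$ be induced by the local inclusion $\struct_{\A^{d-1}_k,0}\hookrightarrow\struct_{\A^d_k,0}$; then $q\circ r_d=\id_{S_{d-1}}$, and since $\pi\circ p\circ j$ and $S_d\xrightarrow{q}S_{d-1}\to\A^{d-1}_k$ coincide (both being $x_i\mapsto x_i$ for $i<d$), the pair $(j,q)$ defines $g\colon S_d\to\cU$ with $f\circ g=j$ and $\rho\circ g=q$, whence $\rho\circ g\circ r_d=\id_{S_{d-1}}$. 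Let $y\in\cU$ be the image of the closed point of $S_d$; it lies over $u_0$ and over the closed point $s$ of $S_{d-1}$. Replacing $\cU$ by the connected component through $y$, I may assume $\cU$ is integral (open in a regular scheme), still smooth and surjective over $S_{d-1}$.

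\emph{Compactification --- the main obstacle.} The monic presentation exhibits $\cU$ as an open subscheme of the base change of $V(f)=\Spec\struct(V)[w]/(f)$, which is finite flat of degree $e$ over $V$; hence $\cU$ is open in a scheme finite flat over $V_{S_{d-1}}:=V\times_{\A^{d-1}_k}S_{d-1}\subset\A^1_{x_d,S_{d-1}}$. Since $S_{d-1}$ maps into every neighbourhood of $0$ in $\A^{d-1}_k$, I may first shrink $\A^{d-1}_k$ (and $U$, $V$ accordingly) so that $V$ is a box $W_0\times(\A^1_{x_d}\setminus\{c_1,\dots,c_r\})$; then $\PP^1_{x_d,S_{d-1}}\setminus V_{S_{d-1}}$ is the section at infinity together with the constant sections $\{x_d=c_i\}$, all finite over $S_{d-1}$. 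Clearing the $x_d$-denominators of $f$ by the substitution $w\mapsto\bigl(\prod_i(x_d-c_i)\bigr)^Nw$, which preserves monicity, I extend the finite flat cover to a scheme finite over $\PP^1_{x_d,S_{d-1}}$, and let $\relpcurve$ be the normalization of the closure of $\cU$ in it: an integral normal scheme, finite over $\PP^1_{x_d,S_{d-1}}$ and hence projective over $S_{d-1}$, in which $\cU$ is a dense open. Finally set $\relcurve:=$ the locus where $\relpcurve\to S_{d-1}$ is smooth; then $\cU\subset\relcurve$, the map $\relcurve\to S_{d-1}$ is smooth of relative dimension $1$, and $\relpcurve\setminus\relcurve$ is the union of the preimages of $\{x_d=\infty,c_1,\dots,c_r\}$ (finite over $S_{d-1}$) with the non-smooth locus of $\relpcurve\to S_{d-1}$, whose positive-dimensional-over-a-point part is the relative ramification divisor of a fibrewise generically separable cover and is therefore finite over $S_{d-1}$. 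I expect this last finiteness to be the crux: one must control both the boundary ``at infinity in the $x_d$-direction'' (handled by the box-shrinking) and the vertical components of special fibres (handled by passing to the reduced/normal structure, which keeps the relative discriminant horizontal).

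\emph{Conclusion.} Lemma~\ref{lm:compactify_subcurve}, applied over $S=S_{d-1}$ to the dense open immersion $\relcurve\subset\relpcurve$ with $\relcurve':=\cU$ (so that $\relcurve\setminus\relcurve'$ is quasi-finite over $S_{d-1}$), yields an open $\cW\subset\cU$ containing $y$ that admits a fine compactification over $S_{d-1}$. Taking $\cW$ as the required $\cU$, with $f$ restricted to $\cW$ and $g$ factoring through $\cW$ (as $S_d$ is Henselian local and $y\in\cW$), completes the proof.
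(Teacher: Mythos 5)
Your construction of $\cU=U\times_{\A^{d-1}_k}S_{d-1}$ together with $f$, $g$, $\rho$ and the verification of property (2) is exactly the paper's (its $U_S$, $f'$, $g'$, $\rho'$), and the plan of feeding a fibrewise compactification into Lemma~\ref{lm:compactify_subcurve} is also the paper's. The gap is in the compactification step, and it is precisely the point you flag as ``the crux''. To apply Lemma~\ref{lm:compactify_subcurve} with $\relcurve'=\cU$ you must know that $\relcurve\setminus\cU$ is quasi-finite over $S_{d-1}$; your parenthetical assertion of this is unjustified, and your accounting of the boundary (the locus over $\PP^1\setminus V_{S_{d-1}}$ plus the non-smooth locus) omits the third and problematic piece: the part of the finite cover $V(f)_{S_{d-1}}$ lying over $V_{S_{d-1}}$ that is \emph{not} in $\cU$, namely $\bigl(V(f)\setminus D(g)\bigr)\times_{\A^{d-1}_k}S_{d-1}$. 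This piece can be one-dimensional over the closed point of $S_{d-1}$ and smooth over $S_{d-1}$. Concretely, take $d=2$, $\chark k\neq 2$, and $U=V(w^2-1-x_1x_2)\cap D(w)\cap D(w-1)\subset\A^3$ with $p$ the projection to $\A^2_{x_1,x_2}$ and marked point $u_0=(0,0,-1)$: this is an irreducible \etale neighbourhood of $0$ of generic degree $2$ over $\A^2$, but over every point of the axis $\{x_1=0\}$ the fibre of $V(w^2-1-x_1x_2)$ is $\{w=\pm1\}$ and $U$ omits $w=1$. After base change to $S_1$, $V(f)_{S_1}=\Spec R[x_2][w]/(w^2-1-x_1x_2)$ is irreducible, regular and smooth over $S_1$, $\cU$ is dense in it, and $\overline{\cU}\setminus\cU$ contains the whole line $\{w=1,\,x_1=0\}\cong\A^1_k$ in the closed fibre, inside the smooth locus. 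No Zariski shrinking of $U$ (which preserves the generic degree) or of $V$ (which must contain the origin, hence meet the axis) removes this; your ``box'' reduction is also not available in general, and in any case would not help here since the failure happens over $V_{S_{d-1}}$ itself, not over its complement in $\PP^1$.

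The missing idea is the paper's preliminary coordinate change. Let $V\subset\A^d_k$ be the open locus over which $p$ is finite (dense, but possibly not containing $0$, and possibly disjoint from the last coordinate axis, as in the example above). The paper composes $p$ with the automorphism $(x_1,\dots,x_d)\mapsto(x_1-x_d^N,\dots,x_{d-1}-x_d^{N^{d-1}},x_d)$, chosen via the monomial curve $K_N$ so that afterwards $\{0\}\times\A^1_{x_d}$ meets $V$; this automorphism fixes $\A^{d-1}_k\times\{0\}$ pointwise, so $r_d$ and the conclusion of the lemma are unaffected, but it changes the projection to $\A^{d-1}_k$ and hence the relative curve. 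Only then does one compactify (via Zariski's Main Theorem over $\A^{d-1}_k\times\PP^1_k$): the boundary $D_S$ avoids the preimage of $V_S$, its closed fibre is therefore a closed subset of $\{0\}\times\PP^1_k$ missing the nonempty open $V\cap(\{0\}\times\PP^1_k)$, hence finite, and projectivity over the local base upgrades this to finiteness of $D_S$. Your argument needs this (or an equivalent device) inserted before the compactification; as written, the reduction to Lemma~\ref{lm:compactify_subcurve} fails.
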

\begin{proof}
Without loss of generality we may assume that $U$ is connected. Let $V$ be an open subset of $\A^d_k$ such that $U\times_{\A^d_k} V\to V$ is finite and set $Z=\A^d_k-V$. Choose some generators $\{h_1,h_2,\hdots,h_l\}$ defining the ideal of $Z$ and set $N=\deg h_1$. Consider the curve
\[
K_N=Z(x_1-x_d^{N},x_2-x_d^{N^2},\hdots, x_{d-1}-x_d^{N^{d-1}})\subset \A^d_k.
\]
It is straightforward to see that $h_1(x_d^{N},x_d^{N^2},\hdots,x_{d-1}^{N^{d-1}}, x_d)\neq 0$. Thus $K_N\not\subset Z$ and $K_N\cap V\neq\emptyset$. Changing the coordinates on $\A^d_k$ as 
\[
(x_1,x_2,\hdots,x_d)\mapsto (x_1-x_d^{N},x_2-x_d^{N^2},\hdots, x_{d-1}-x_d^{N^{d-1}},x_d)
\]
we may assume that $Z(x_1,x_2,\hdots,x_{d-1})\cap V\neq \emptyset$.

The \etale morphism $p\colon U\to \A^d_k$ induces an \etale morphism $\widetilde{p}\colon U\to \A^{d-1}_k\times \PP^1_k$. Applying Zariski's Main Theorem \cite[Theorem~8.12.6]{GD67} we obtain a factorization $\widetilde{p}=\overline{p}\circ \widetilde{p}'$,
\[
U\xrightarrow{\widetilde{p}'} X \xrightarrow{\overline{p}} \A^{d-1}_k\times \PP^1_k,
\]
with $\widetilde{p}'$ being an open embedding and $\overline{p}$ being finite. We may assume that $U$ is dense in $X$. Consider the base change of the above factorization with respect to the structure morphism $S_{d-1}\to \A^{d-1}_k$,
\[
U_S \xrightarrow{q'} X_S \xrightarrow{\overline{q}'} S_{d-1}\times \PP^1_k,
\]
where $U_S=S_{d-1}\times_{\A^{d-1}_k} U$ and $X_S=S_{d-1}\times_{\A^{d-1}_k} X$. The projection $S_d\to S_{d-1}$ together with the structure map $j\colon S_d\to U$ induce a morphism $g'\colon S_d\to U_S$ such that 
\[
f' \circ g'= j\,\quad r_d\circ g' \circ \rho'=\id_{S_{d-1}}
\]
for the projections $f'\colon U_S=S_{d-1}\times_{\A^{d-1}_k} U\to U$ and $\rho'\colon U_S=S_{d-1}\times_{\A^{d-1}_k} U\to S_{d-1}$.
\[
\xymatrix @C=3pc{
S_d \ar[r]_(0.6){g'} \ar@/^1.0pc/[rrr]^{j} & U_S \ar[rr]_{f'} \ar[d]^{\rho'} \podr & & U \ar[d] \\
& S_{d-1} \ar[ul]^{r_d}\ar[rr] & & \A^{d-1}_k 
}
\]
The morphism $\rho'$ is smooth of relative dimension 1 being the base change of such morphism. It remains to construct a fine compactification of a Zariski neighborhood of $g'(\{0\})$ in $U_S$.

The scheme $X_S$ being finite over $S_{d-1}\times \PP^1_k$ is projective over $S_{d-1}$. The composition $\overline{q}'\circ q'$ is finite over $V_S=S_{d-1}\times_{\A^{d-1}_k} V$, hence over $V_S$ morphism $q'$ is open and finite whence an isomorphism. Thus $q'(D_S)\cap V_S=\emptyset$ for $D_S=X_S-U_S$. Recall that $(\{0\}\times \PP^1_k)\cap V\neq \emptyset$, whence $\{0\}\times\PP^1_k\not\subset\overline{q}'(D_S)$. It follows that the closed fiber of $D_S$ is finite. Since $D_S$ is projective over $S_{d-1}$ then $D_S$ is finite over $S_{d-1}$. Lemma~\ref{lm:compactify_subcurve} applied to $U_S\subset X_S$ yields that there exists a Zariski neighborhood $\cU\subset U_S$ of $g'(\{0\})$ admitting a fine compactification. The claim follows.
\end{proof}

\begin{lemma} \label{lm:framing}
Let $S=\Spec R$ be the spectrum of a local ring, $\relcurve \to S$ be a smooth morphism of relative dimension $1$ and $y\in \relcurve$ be a closed point. Suppose that $\relcurve$ admits a fine compactification. Then there exists a Zariski neighborhood $\relcurve' \subset \relcurve$ of $y$ admitting a fine compactification and a normal framing (see Definition~\ref{def:normal_framing}).
\end{lemma}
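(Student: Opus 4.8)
The plan is to combine the embedding of $\relcurve$ into affine space provided by the fine compactification with a Zariski-local trivialization of the conormal bundle and the standard construction of an \'etale tubular neighborhood, and then to recover a fine compactification of the resulting subcurve by invoking Lemma~\ref{lm:compactify_subcurve}.

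First I would use the Remark after Definition~\ref{def:fine_compactification}: up to a choice of coordinates a fine compactification realizes $\relcurve$ as a \emph{closed} subscheme of $\A^{d_0}_S$ for some $d_0$, with projective closure $\relpcurve$ and $D=\relpcurve-\relcurve$ finite over $S$; in particular $\relcurve$ is affine and the closed immersion $\relcurve\hookrightarrow\A^{d_0}_S$ is regular of codimension $m:=d_0-1$, both schemes being smooth over $S$. Hence the conormal sheaf is locally free of rank $m$, so free over $\struct_{\relcurve,y}$. I would choose functions $\psi_1,\dots,\psi_m$ in the ideal of $\relcurve$ whose classes form a basis of the conormal module at $y$; by Nakayama's lemma they generate that ideal on a principal open $\relcurve_1\ni y$, i.e.\ there is an open $W_1\subset\A^{d_0}_S$ in which $\relcurve_1$ is closed and cut out scheme-theoretically by $\psi=(\psi_1,\dots,\psi_m)$. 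These $\psi$ will serve as the framing functions.

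Next I would produce the \'etale neighborhood and the retraction. The map $\psi\colon W_1\to\A^m_S$ is smooth of relative dimension $1$ along $\relcurve_1$, hence on an open $W_2\supseteq\relcurve_1$. After a linear change of coordinates on $\A^{d_0}_S$ making $dx_{d_0}$ restrict to a generator of the free rank-one module $\Omega_{\relcurve_1/S}$ near $y$, the morphism $(\psi,x_{d_0})\colon W_3\to\A^m_S\times\A^1_S=\A^{d_0}_S$ is \'etale on a neighborhood $W_3$ of $y$, and $h:=x_{d_0}|_{\relcurve_2}\colon\relcurve_2\to\A^1_S$ is \'etale for $\relcurve_2:=\relcurve_1\cap W_3$. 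I would then set
\[
\widetilde W:=W_3\times_{\A^1_S}\relcurve_2,
\]
the fibre product taken along $x_{d_0}\circ(\psi,x_{d_0})$ on one side and $h$ on the other. The first projection $p\colon\widetilde W\to W_3$ is \'etale, the section $r:=(i,\id)\colon\relcurve_2\to\widetilde W$ (well defined since $x_{d_0}\circ(\psi,x_{d_0})\circ i=h$) satisfies $p\circ r=i$, and the second projection $\rho\colon\widetilde W\to\relcurve_2$ is a retraction with $\rho\circ r=\id$. Since $Z(\psi|_{\widetilde W})=\relcurve_2\times_{\A^1_S}\relcurve_2$ contains $r(\relcurve_2)$, the diagonal of an \'etale map, as a clopen component, deleting the other component gives $Z(\psi|_{\widetilde W})=r(\relcurve_2)$ scheme-theoretically. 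Together with $W_3\hookrightarrow\A^{d_0}_S$ and $\relcurve_2\hookrightarrow W_3$ this is a level $m$ normal framing of $\relcurve_2$.

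Finally, every shrinking above removed from $\relcurve$ a closed subset of the zero locus of a regular function nonvanishing at $y$; since $y$ lies over the closed point of $S$, such a zero locus meets each fibre of $\relcurve\to S$ in a proper closed subset, so $\relcurve-\relcurve_2$ is quasi-finite over $S$. Lemma~\ref{lm:compactify_subcurve}, applied with the given dense open immersion $\relcurve\subset\relpcurve$ and $\relpcurve-\relcurve$ finite over $S$, then yields a Zariski neighborhood $\relcurve'\subseteq\relcurve_2$ of $y$ admitting a fine compactification, and the functoriality of normal framings under open immersions in Definition~\ref{def:normal_framing} transports the framing of $\relcurve_2$ to $\relcurve'$. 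The step I expect to require the most care is the construction of the \'etale tubular neighborhood: choosing the auxiliary \'etale coordinate, checking that the fibre product supplies both the retraction and a lift of $\relcurve_2$, and verifying that after the last shrinking $\psi$ cuts out $r(\relcurve_2)$ exactly; the bookkeeping of the successive shrinkings is routine once one notes that zero loci of functions nonvanishing at $y$ are quasi-finite over $S$.
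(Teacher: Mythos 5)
Your construction of the normal framing itself is sound and is a legitimate alternative to the one in the paper: where you build the \etale neighborhood as the fibre product $W_3\times_{\A^1_S}\relcurve_2$ along an auxiliary \etale coordinate, the paper instead embeds the total space of the normal bundle $N_{W/\relcurve'}$ into $T_{\A^{m+1}_S}\cong \A^{m+1}_S\times_S\A^{m+1}_S$ via a splitting of $T_W|_{\relcurve'}\to N_{W/\relcurve'}$ and composes with the addition map. Both routes produce an \etale $p$, a section $r$ and a retraction $\rho$ with $Z(\psi)=r(\relcurve')$ after discarding a clopen complement, and your verification of these points is correct.

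The genuine gap is in your last paragraph. Lemma~\ref{lm:compactify_subcurve} requires $\relcurve-\relcurve_2$ to be \emph{quasi-finite} over $S$, and your justification --- that the zero locus of a regular function nonvanishing at $y$ meets each fibre in a proper closed subset --- does not give this: the closed fibre of $\relcurve$ may be disconnected, and a proper closed subset of it can contain entire irreducible components, namely those not containing $y$. In that case $\relcurve-\relcurve_2$ has a one-dimensional closed fibre, $\relpcurve-\relcurve_2$ cannot be contained in the finite zero locus of any $\zeta_\infty$, and Lemma~\ref{lm:compactify_subcurve} is not applicable. This is precisely the point where the paper takes extra care: it fixes closed points $y_1,\dots,y_l$, one on each connected component of the closed fibre, and arranges for the open set $W$ (hence $\relcurve'$) to contain all of $y,y_1,\dots,y_l$, so that the closed fibre of $\relcurve-\relcurve'$ is finite and quasi-finiteness follows. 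Repairing your argument is not purely cosmetic: the functions $\psi_1,\dots,\psi_m$ can be chosen to generate the ideal near all of $y,y_1,\dots,y_l$ (the conormal module is free over the semilocal ring at these points), but your single linear coordinate $x_{d_0}$ would then have to induce an \etale map to $\A^1_S$ simultaneously near all of these points, which a coordinate permutation or a generic linear form need not achieve (for instance over a finite residue field). The paper's normal-bundle construction sidesteps this, since the map $p$ it produces is \etale along the entire zero section automatically.
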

\begin{proof}
Let $(\relcurve\subset \relpcurve,\struct(1))$ be a fine compactification of $\relcurve$. Consider the closed immersion $\relpcurve\to \PP^{m+1}_S$ given by $\struct(1)$. Changing the coordinates on $\PP^{m+1}_S$ we may assume that $\relcurve = \relpcurve \cap \A^{m+1}_S$.

Let $C_1,C_2,\hdots,C_l$ be the connected components of the closed fiber of $\relcurve$. Choose some closed points $y_1\in C_1,y_2\in C_2,\hdots,y_l\in C_l$. Since $\relcurve\to S$ is smooth at $y,y_1,y_2,\hdots,y_l$ then there exists an affine open subscheme $W\subset \A^{m+1}_S$ and regular functions $\psi_1,\psi_2,\hdots,\psi_m \in\Gamma(W,\struct_{W})$ such that $y,y_1,y_2,\hdots,y_l\in W$ and $\relcurve\cap W=Z(\psi_1,\psi_2,\hdots,\psi_m)$. Set $\relcurve'=\relcurve\cap W$. We have $\relcurve'\cap C_1\neq\emptyset, \relcurve'\cap C_2\neq\emptyset,\hdots, \relcurve'\cap C_l\neq\emptyset$, thus $\relcurve-\relcurve'$ is quasi finite over $S$.

Let $N_{W/{\relcurve'}}$ be the normal bundle for the closed immersion $\relcurve'\to W$. Choose a splitting $r\colon N_{W/{\relcurve'}}\to T_W|_{\relcurve'}$ for the epimorphism $T_W|_{\relcurve'} \to N_{W/{\relcurve'}}$. The splitting $r$ gives rise to a closed immersion of the total spaces $N_{W/{\relcurve'}} \to T_W$. Consider the morphism 
\[
p\colon N_{W/{\relcurve'}}\to \A^{m+1}_S
\]
given by the composition
\[
N_{W/{\relcurve'}} \to T_W \to T_{\A^{m+1}_S} \xrightarrow{\simeq}  \A^{m+1}_S \times_S \A^{m+1}_S\to \A^{m+1}_S.
\]
Here the first map is the closed embedding induced by $r$, the second one is given by the immersion $T_W\subset T_{\A^{m+1}_S}$, the third one is the canonical trivialization $T_{\A^{m+1}_S}\cong \struct^{\oplus (m+1)}_{\A^{m+1}_S}$ and the last one is the addition morphism. 

Let $z\colon \relcurve'\to N_{W/\relcurve'}$ be the zero section. Then there is a decomposition
\[
\left.\left(T_{(N_{W/\relcurve'})}\right)\right|_{z(\relcurve')}\cong T_{\relcurve'} \oplus N_{W/\relcurve'}
\]
and it is straightforward to check that $p$ induces an isomorphism
\[
\left.\left(T_{(N_{W/\relcurve'})}\right)\right|_{z(\relcurve')}\cong T_{\relcurve'} \oplus  N_{W/\relcurve'} \cong \left.T_{\A^{m+1}_S}\right|_{\relcurve'}.
\]
Hence $p$ is \etale at the points of $z(\relcurve')$. Let $\widetilde{W}'$ be the neighborhood of $z(\relcurve')$ where $p$ is \'etale. We have $p(z(\relcurve'))=\relcurve'$ thus $p^{-1}(\relcurve')=z(\relcurve')\sqcup Z$. Set $\widetilde{W}=\widetilde{W}'\cap p^{-1}(W)-Z$, let $p'\colon \widetilde{W}\to W$ be the restriction of the projection $p$ and let $\rho \colon \widetilde{W}\to \relcurve'$ be the projection induced by the projection $N_{W/\relcurve'}\to \relcurve'$. Then
\[
(W, p'\colon \widetilde{W}\to W,  (\psi_1\circ p', \psi_2\circ p', \hdots, \psi_m\circ p'), \rho)
\]
is a level $m$ normal framing of $\relcurve'$.

The claim follows by Lemma~\ref{lm:compactify_subcurve} since an open subscheme of a scheme admitting a normal framing clearly admits a normal framing.
\end{proof}

\section{Rigidity for linear framed presheaves}

\begin{theorem} \label{thm:main}
Let $S=\Spec R$ be the spectrum of a Henselian local ring, $\relcurve\to S$ be a smooth morphism of relative dimension $1$ admitting a fine compactification and $r_0,r_1\colon S\to \relcurve$ be morphisms of $S$-schemes such that $r_0(x)=r_1(x)$ for the closed point $x\in S$. Then for every $n\in \mathbb{N}$ such that $n\in R^*$ the following holds.
\begin{enumerate}
\item
If $2\in R^*$ then
\[
\la \sigma_{\relcurve}^m \ra \circ r_1 - \la \sigma_{\relcurve}^m \ra \circ r_0 = H \circ i_1 - H \circ i_0   + n h_{\relcurve}\circ a
\]
for some $m\in \mathbb{N}$, $H\in \ZF_m^S(\A^1\times S,\relcurve)$ and $a\in \ZF_{m-1}^S(S,\relcurve)$.
\item
If $2=0$ in $R$ then 
\[
\la \sigma_{\relcurve}^m \ra \circ r_1 - \la \sigma_{\relcurve}^m \ra \circ r_0 = H \circ i_1 - H \circ i_0   + n \sigma_\relcurve \circ a
\]
for some $m\in \mathbb{N}$, $H\in \ZF_m^S(\A^1\times S,\relcurve)$ and $a\in \ZF_{m-1}^S(S,\relcurve)$.
\end{enumerate}
Here $i_0,i_1\colon S\to \A^1\times S$ are the closed immersions given by $\{0\}\times S$ and $\{1\}\times S$ respectively.
\end{theorem}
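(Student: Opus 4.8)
The plan is to run the geometric construction sketched in the introduction and then convert the resulting homotopy of rational functions on $\relcurve$ into a framed $\A^1$-homotopy by combining Lemmas~\ref{lm:suspension} and~\ref{lm:hyperbolic}. I describe the case $2\in R^*$ and indicate at the end the parallel modifications for $2=0$ in $R$.

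\textbf{Reduction.} An open immersion $\relcurve'\subset\relcurve$ induces, by post-composition with $\relcurve'\hookrightarrow\relcurve$ regarded as an element of $\Fr_0^S(\relcurve',\relcurve)$, a map $\ZF_*^S(-,\relcurve')\to\ZF_*^S(-,\relcurve)$ sending $\sigma_{\relcurve'},h_{\relcurve'}$ to $\sigma_\relcurve,h_\relcurve$ and $r_0,r_1$ to $r_0,r_1$; since $S$ is local both $r_0,r_1$ factor through every Zariski neighborhood of $y:=r_0(x)=r_1(x)$, so it suffices to prove the identity after shrinking $\relcurve$ around $y$. By Lemma~\ref{lm:framing} I may therefore assume $\relcurve$ carries a fine compactification $(\relcurve\subset\relpcurve,\struct(1))$ (Definition~\ref{def:fine_compactification}) and a level $\ell$ normal framing $\Psi$ (Definition~\ref{def:normal_framing}): $\relcurve=Z(\phi_1,\dots,\phi_\ell)$ inside an open $W\subset\A^{\ell+1}_S$, together with an \'etale neighborhood $\widetilde{W}\to W$ of $\relcurve$ and a retraction $\rho\colon\widetilde{W}\to\relcurve$.

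\textbf{The homotopy.} Put $\cL_i=\struct_{\relpcurve}(r_i(S))$; since $r_0(x)=r_1(x)$ one has $\cL_0|_{\overline{C}}\cong\cL_1|_{\overline{C}}$, so Lemma~\ref{lm:Picard_rigidity} provides $\cL$ with $\cL_0\cong\cL_1\otimes\cL^{\otimes 2n}$ and $\cL|_{\overline{C}}\cong\struct_{\overline{C}}$, and Lemma~\ref{lm:sections} then supplies sections $s_0,s_1,\zeta,\xi_0,\xi_1,\tau_0,\tau_1$ with $Z(s_i)=r_i(S)$, $\relpcurve-\relcurve\subset Z(\zeta)$, the loci $Z(\zeta),Z(\xi_0),Z(\xi_1),Z(\tau_0),Z(\tau_1)$ finite over $S$ and disjoint in the prescribed pattern, and with $s_0\otimes\xi_0^{\otimes 2n}$, $s_1\otimes\xi_1^{\otimes 2n}$ agreeing on $\overline{C}\cup Z(\zeta)$ after the chosen identification of their line bundles. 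The section $\Upsilon_1=t\,s_0\otimes\xi_0^{\otimes 2n}+(1-t)\,s_1\otimes\xi_1^{\otimes 2n}$ on $\A^1\times\relpcurve$ has $Z(\Upsilon_1)$ finite over $\A^1\times S$ (its closed fibre meets neither $\relpcurve-\relcurve$ nor the finitely many points of $\overline{C}\cap Z(\zeta)$, by the compatibility on $\overline{C}$), so $\Upsilon_1/\zeta$ is a regular function on $\A^1\times\relcurve$; adjoining $\phi_1,\dots,\phi_\ell$ and composing with $\rho$ produces, after the routine shrinking of $\widetilde{W}$, a framed $\A^1$-homotopy
\[
H=\la Z(\Upsilon_1),\ \A^1\times\widetilde{W},\ (\phi_1,\dots,\phi_\ell,\ (\Upsilon_1/\zeta)\circ\rho),\ \rho\ra\in\ZF_{\ell+1}^S(\A^1\times S,\relcurve),
\]
whose endpoints $H\circ i_0$ and $H\circ i_1$ have supports $r_1(S)\sqcup Z(\xi_1)$ and $r_0(S)\sqcup Z(\xi_0)$.

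\textbf{Decomposition and the two lemmas.} By the defining relation of $\ZF$ write $H\circ i_0=\la\Phi_1^r\ra+\la\Phi_1^\xi\ra$ and $H\circ i_1=\la\Phi_0^r\ra+\la\Phi_0^\xi\ra$, with $\Phi_i^r$ supported on $r_i(S)$ and $\Phi_i^\xi$ on $Z(\xi_i)$. Near $r_i(S)$ the last function $s_i\otimes\xi_i^{\otimes 2n}/\zeta$ is a unit times a local equation of $r_i(S)$ and $\rho$ restricts to $r_i$; after replacing $\phi_\ell$ by $\alpha\phi_\ell$ for a suitable $\alpha\in R^*$ (Lemma~\ref{lm:basic_homotopies}(\ref{lm:basic_homotopies:derivative}))---a single $\alpha$ serves both $i=0,1$ since $s_0|_{\overline{C}}=s_1|_{\overline{C}}$ fixes the relevant Jacobian modulo $\mm$, after which $\xi_0$ is readjusted slightly to restore the $\overline{C}$-compatibility---Definition~\ref{def:Jac} gives $\Jac(\Phi_i^r)=1$, so Lemma~\ref{lm:suspension} yields $\la\Phi_i^r\ra\simA\la\sigma_\relcurve^{\ell+1}\ra\circ r_i$. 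Near $Z(\xi_i)$, using $\tau_i$ to trivialize the relevant bundle we write $s_i\otimes\xi_i^{\otimes 2n}/\zeta=\beta_i\eta_i^{2n}$ with $\beta_i$ a unit and $\eta_i=\xi_i/\tau_i$ cutting out $Z(\xi_i)$; taking the \'etale neighborhood of $Z(\xi_i)$ to be $\relcurve-Z(\tau_i)=\relpcurve-Z(\tau_i)$ (the equality holds since $\relpcurve-\relcurve\subset Z(\zeta)\subset Z(\tau_i)$), which is finite over $\A^1\times S$ via $\eta_i$ because $[\xi_i:\tau_i]\colon\relpcurve\to\PP^1_S$ is finite, one sees that $\Phi_i^\xi=\Psi\frp\la Z(\xi_i),\relcurve-Z(\tau_i),\beta_i\eta_i^{2n},\iota\ra$ up to reordering coordinates (Lemma~\ref{lm:basic_homotopies}(\ref{lm:basic_homotopies:elementary})), $\iota$ the inclusion. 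Applying $\sigma_\relcurve\circ(-)$ throughout (harmless, as the level is immaterial in the statement) and invoking Lemma~\ref{lm:hyperbolic} gives $\sigma_\relcurve\circ\la\Phi_i^\xi\ra\simA nh_\relcurve\circ a_i$ with $a_i=\Psi\frp\la Z(\xi_i),\relcurve-Z(\tau_i),\beta_i\eta_i,\iota\ra\in\ZF_{\ell+1}^S(S,\relcurve)$. Combining the decomposition with Lemma~\ref{lm:suspension} gives $\la\sigma_\relcurve^{\ell+1}\ra\circ r_1-\la\sigma_\relcurve^{\ell+1}\ra\circ r_0\simA(H\circ i_0-H\circ i_1)+\la\Phi_0^\xi\ra-\la\Phi_1^\xi\ra$, and one application of $\sigma_\relcurve\circ(-)$ turns this into
\[
\la\sigma_\relcurve^{\ell+2}\ra\circ r_1-\la\sigma_\relcurve^{\ell+2}\ra\circ r_0\simA(\sigma_\relcurve\circ H)\circ i_0-(\sigma_\relcurve\circ H)\circ i_1+nh_\relcurve\circ(a_0-a_1);
\]
since an $\A^1$-homotopy in $\ZF$ is realized by a single correspondence, this rearranges into the asserted identity with $m=\ell+2$, $H$ replaced by an element of $\ZF_{\ell+2}^S(\A^1\times S,\relcurve)$, and $a=a_0-a_1\in\ZF_{\ell+1}^S(S,\relcurve)$. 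For $2=0$ in $R$ one runs the same argument with $\cL^{\otimes n}$ and $n$-th powers: then $-\eta_i=\eta_i$ and the odd/even clauses of Lemma~\ref{lm:hyperbolic} give $\sigma_\relcurve\circ\la\Phi_i^\xi\ra\simA n\sigma_\relcurve\circ a_i$, so the correction term becomes $n\sigma_\relcurve\circ(a_0-a_1)$.

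\textbf{Expected main obstacle.} All the genuinely geometric content is absorbed into Lemmas~\ref{lm:Picard_rigidity}, \ref{lm:sections} and \ref{lm:framing}, so the delicate part is the bookkeeping of the third paragraph: arranging, by a single unit rescaling of $\phi_\ell$ constrained to be compatible with the $\overline{C}$-normalization of $s_0,s_1$, that the pieces over $r_0(S)$ and $r_1(S)$ simultaneously satisfy the hypotheses of Lemma~\ref{lm:suspension}, and recognizing the pieces over $Z(\xi_0),Z(\xi_1)$, after one $\sigma_\relcurve$, as $\frp$-images of the normal framing with a one-function $\relcurve$-inner correspondence whose defining function is genuinely finite over $\A^1\times S$; the latter is exactly why the \'etale neighborhood must be chosen as $\relcurve-Z(\tau_i)$ and is the role played by the auxiliary sections $\tau_i$ of Lemma~\ref{lm:sections}.
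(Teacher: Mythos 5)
Your proposal follows essentially the same route as the paper's proof: reduce to a $\relcurve$ carrying both a fine compactification and a normal framing, build the homotopy $\Upsilon_1/\Upsilon_0$ from the sections of Lemma~\ref{lm:sections}, decompose the endpoints over $r_i(S)\sqcup Z(\xi_i)$, handle the $Z(\xi_i)$ pieces with $\tau_i$ and Lemma~\ref{lm:hyperbolic}, and normalize the Jacobians of the $r_i(S)$ pieces (one rescaling of the last framing function plus a readjustment of $\xi_0$ by a $2n$-th root $\beta\equiv 1 \bmod \mm$) so that Lemma~\ref{lm:suspension} applies. Two small slips: the rescaling $\phi_\ell\mapsto\alpha\phi_\ell$ for a general $\alpha\in R^*$ is not covered by Lemma~\ref{lm:basic_homotopies}(\ref{lm:basic_homotopies:derivative}) (which requires $\alpha|_Z=1$) — it must instead be built into the choice of normal framing from the outset, as the paper does; and finiteness of $Z(\Upsilon_1)$ over $\A^1\times S$ does not follow from finiteness of its fiber over $\A^1\times\{x\}$ alone (the quasi-finite locus of a projective morphism is open, but an open subset of $\A^1\times S$ containing $\A^1\times\{x\}$ need not be everything) — one should argue as in the paper via the finite morphism $(\pi_{\A^1},[\Upsilon_1:\Upsilon_0])$ to $\A^1\times\PP^1_S$, using $Z(\Upsilon_0)\cap Z(\Upsilon_1)=\emptyset$.
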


\begin{proof}
We give the detailed proof only for the first claim. The reasoning for the second one is literally the same up to the usage of $n$ instead of $2n$ and the observation that $h_{\relcurve}=2\sigma_{\relcurve}$ if $2=0$.

We need to show that
\[
\la \sigma_{\relcurve}^m \ra \circ r_1 - \la \sigma_{\relcurve}^m \ra \circ r_0 \simA n h_{\relcurve}\circ a
\]
for some $m\in \mathbb{N}$ and $a\in \ZF_{m-1}^S(S,\relcurve)$. Set $y=r_0(x)=r_1(x)$. It is sufficient to prove the theorem for a Zariski neighborhood $\relcurve'$ of $y$ admitting a fine compactification. Indeed, composing the claim for $\relcurve'$ with the open immersion $j\colon \relcurve'\to \relcurve$ and applying Lemma~\ref{lm:basic_homotopies2}(\ref{lm:basic_homotopies2:shuffle}) one obtains the claim for $\relcurve$. Thus Lemma~\ref{lm:framing} yields that we may assume that $\relcurve$ admits a normal framing as well as a fine compactification, i.e. that $\lF_{m-2}(\relcurve) \neq \emptyset$ for some $m\in\NN$.

Let $(\relcurve\subset \relpcurve,\struct(1))$ be a fine compactification of $\relcurve$. Denote $Z_0=r_0(S)\subset \relcurve\subset\relpcurve$ and $Z_1=r_1(S)\subset \relcurve\subset\relpcurve$ and let $\cL_0=\struct_{\relpcurve}(Z_0)$ and $\cL_1=\struct_{\relpcurve}(Z_1)$ be the corresponding line bundles. Applying Lemmas~\ref{lm:Picard_rigidity} and~\ref{lm:sections} we obtain a line bundle $\cL$ over $\relpcurve$, an isomorphism $\Theta\colon \cL_0\xrightarrow{\simeq}\cL_1\otimes \cL^{\otimes 2n}$, a natural number $N\in\mathbb{N}$ and sections
\[
s_0\in\Gamma(\relpcurve,\cL_0),\quad s_1\in\Gamma(\relpcurve,\cL_1),\quad  \zeta\in\Gamma(\relpcurve,\cL_0(2nN)),\quad \xi_0,\tau_0\in\Gamma(\relpcurve,\struct(N)),\quad \xi_1,\tau_1\in \Gamma(\relpcurve,\cL(N))
\]
such that
\begin{enumerate}
\item
$Z(s_0)=Z_0$, $Z(s_1)=Z_1$;
\item
$D\subset Z(\zeta)$, $Z(s_0)\cap Z(\zeta)=Z(s_1)\cap Z(\zeta)=\emptyset$;
\item
$Z(\xi_0)\cap Z(s_0) = Z(\xi_1)\cap Z(s_1) = Z(\xi_0)\cap Z(\zeta)=Z(\xi_1)\cap Z(\zeta)=\emptyset$;
\item
$(s_0\otimes \xi_0^{\otimes 2n})|_{\overline{C}} = \Theta(2nN)^* ( s_1\otimes \xi_1^{\otimes 2n})|_{\overline{C}}$;
\item
$(s_0\otimes \xi_0^{\otimes 2n})|_{Z(\zeta)}= \Theta(2nN)^*(s_1\otimes \xi_1^{\otimes 2n})|_{Z(\zeta)}$;
\item
$Z(\zeta)\cup Z(s_0)\subset Z(\tau_0)$, $Z(\zeta)\cup Z(s_1)\subset Z(\tau_1)$, $Z(\xi_0)\cap Z(\tau_0)=Z(\xi_1)\cap Z(\tau_1)=\emptyset$;
\item
the restriction map $\Gamma(\relpcurve,\struct(N))\to \Gamma(Z(\zeta)\cup Z(s_0)\cup \overline{C},\struct(N)|_{Z(\zeta)\cup Z(s_0)\cup \overline{C}})$ is surjective;
\item
$Z(\zeta), Z(\xi_0), Z(\xi_1), Z(\tau_0)$ and $Z(\tau_1)$ are finite over $S$.
\end{enumerate}
Here $\overline{C}=\relpcurve\times_S \{x\}$ is the closed fiber.

Let $\pi_{\relpcurve}\colon \A^1\times \relpcurve\to \relpcurve$ be the projection and consider the following sections of $\pi_{\relpcurve}^* \cL_0(2nN)$.
\begin{gather*}
\Upsilon_0=\pi_{\relpcurve}^* \zeta,\quad  \Upsilon_1=t\pi_{\relpcurve}^*(s_0\otimes \xi_0^{\otimes 2n})+(1-t)\pi_{\relpcurve}^*\Theta(2nN)^* (s_1 \otimes \xi_1^{\otimes 2n}) \in \Gamma(\A^1\times \relpcurve,\pi_{\relpcurve}^* \cL_0(2nN)).
\end{gather*}
Here $t$ is the coordinate function on $\A^1$. We have $Z(\Upsilon_0)  = \A^1\times Z(\zeta)$ and by the property (5) above
\[
\Upsilon_1|_{\A^1\times Z(\zeta)}=\pi_{Z(\zeta)}^*(s_0\otimes \xi_0^{\otimes 2n})|_{Z(\zeta)}
\]
with $\pi_{Z(\zeta)}\colon \A^1\times Z(\zeta)\to Z(\zeta)$ being the projection. Thus it follows from the properties (2) and (3) that
\[
Z(\Upsilon_0)\cap Z(\Upsilon_1)=\emptyset.
\] 
Consider the morphisms
\[
[\Upsilon_1:\Upsilon_0]\colon \A^1\times \relpcurve \to \PP^1_S, \quad \frac{\Upsilon_1}{\Upsilon_0}\colon \A^1\times (\relpcurve - Z(\zeta)) \to \A^1.
\]
One can easily see that
\[
(\pi_{\A^1},[\Upsilon_1:\Upsilon_0])\colon \A^1\times \relpcurve \to \A^1\times\PP^1_S
\]
is quasi-finite and proper whence finite. Thus $Z(\Upsilon_1)$ is finite over $\A^1_S$. 

It follows from the property (2) above that $\relpcurve-Z(\zeta)\subset \relcurve$. Set $j\colon \relpcurve-Z(\zeta)\to \relcurve$ for the open immersion. The framed correspondence
\[
(Z(\Upsilon_1),\A^1\times (\relpcurve - Z(\zeta)),\tfrac{\Upsilon_1}{\Upsilon_0},j\circ \pi_{\relpcurve - Z(\zeta)}) \in \Fr_\relcurve^S(\A^1\times S,\relcurve)
\]
with $\pi_{\relpcurve - Z(\zeta)}\colon \A^1\times (\relpcurve - Z(\zeta)) \to \relpcurve - Z(\zeta)$ being the projection yields an equivalence
\[
(Z(s_0\otimes \xi_0^{\otimes 2n}), \relpcurve - Z(\zeta), \frac{s_0\otimes \xi_0^{\otimes 2n}}{\zeta}, j) \simA
(Z(s_1\otimes \xi_1^{\otimes 2n}), \relpcurve - Z(\zeta), \frac{\Theta(2nN)^*(s_1\otimes \xi_1^{\otimes 2n})}{\zeta}, j) \in \Fr^S_{\relcurve}(S,\relcurve).
\]

Since $Z(s_0)\cap Z(\xi_0)=Z(s_1)\cap Z(\xi_1)=\emptyset$ by the property (3) above we have
\begin{multline*}
\la Z(s_0\otimes \xi_0^{\otimes 2n}), \relpcurve - Z(\zeta), \frac{s_0\otimes \xi_0^{\otimes 2n}}{\zeta}, j\ra =\\
=\la Z(s_0), \relpcurve - Z(\zeta)- Z(\xi_0), \frac{s_0\otimes \xi_0^{\otimes 2n}}{\zeta}, j\ra +
\la Z(\xi_0), \relpcurve - Z(\zeta) - Z(s_0), \frac{s_0\otimes \xi_0^{\otimes 2n}}{\zeta}, j\ra,
\end{multline*}
\begin{multline*}
\la Z(s_1\otimes \xi_1^{\otimes 2n}), \relpcurve - Z(\zeta), \frac{\Theta(2nN)^*(s_1\otimes \xi_1^{\otimes 2n})}{\zeta}, j \ra =\\
=\la Z(s_1), \relpcurve - Z(\zeta)- Z(\xi_1), \frac{\Theta(2nN)^*(s_1\otimes \xi_1^{\otimes 2n})}{\zeta}, j\ra + \\
+ \la Z(\xi_1), \relpcurve - Z(\zeta) - Z(s_1), \frac{\Theta(2nN)^*(s_0\otimes \xi_0^{\otimes 2n})}{\zeta}, j\ra
\end{multline*}
in $\ZF^S_{\relcurve}(S,\relcurve)$. Here we abuse the notation and denote by $j$ the respective open immersions. 

It follows from the property (6) above that we have
\[
\la Z(\xi_0), \relpcurve - Z(\zeta) - Z(s_0), \frac{s_0\otimes \xi_0^{\otimes 2n}}{\zeta}, j\rangle
=\la Z(\xi_0), \relpcurve - Z(\tau_0), \frac{s_0\otimes \tau_0^{\otimes 2n}}{\zeta} \left(\frac{\xi_0}{\tau_0}\right)^{2n}, j\rangle
\]
and that $\tfrac{s_0\otimes \tau_0^{\otimes 2n}}{\zeta}$ is an invertible regular function on $\relpcurve - Z(\tau_0)$. The morphism $[\xi_0:\tau_0]\colon \relpcurve\to \PP^1_S$ is projective and quasi-finite whence finite, thus $\tfrac{\xi_0}{\tau_0}\colon\relpcurve - Z(\tau_0) \to \A^1_S$ is finite as well. Thus Lemma~\ref{lm:hyperbolic} yields that for every $\Phi\in \lF_{m-2}(\relcurve)$ we have
\begin{multline*}
\sigma_{\relcurve} \circ (\Phi\frp  \la Z(\xi_0), \relpcurve - Z(\zeta) - Z(s_0), \frac{s_0\otimes \xi_0^{\otimes 2n}}{\zeta}, j\rangle) = \\ =  \sigma_{\relcurve} \circ (\Phi\frp \la Z(\xi_0), \relpcurve - Z(\tau_0), \frac{s_0\otimes \tau_0^{\otimes 2n}}{\zeta} \left(\frac{\xi_0}{\tau_0}\right)^{2n}, j\rangle)  \simA nh_\relcurve \circ a_0
\end{multline*}
for some $a_0\in \ZF_{m-1}^S(S,\relcurve)$. A similar reasoning shows that for every $\Phi\in \lF_{m-2}(\relcurve)$ we have
\[
\sigma_{\relcurve} \circ (\Phi\frp  \la Z(\xi_1), \relpcurve - Z(\zeta) - Z(s_1), \frac{\Theta(2nN)^* (s_1\otimes \xi_1^{\otimes 2n})}{\zeta}, j\rangle) \simA nh_\relcurve \circ a_1
\]
for some $a_1\in \ZF_{m-1}^S(S,\relcurve)$. Combining the above we obtain the for every $\Phi\in \lF_{m-2}(\relcurve)$ we have
\begin{multline*}
\sigma_\relcurve \circ \left (\Phi \frp \la Z(s_1), \relpcurve - Z(\zeta)- Z(\xi_1), \frac{\Theta(2nN)^*(s_1\otimes \xi_1^{\otimes 2n})}{\zeta}, j\ra - \right. \\
\left. - \Phi \frp \la Z(s_0), \relpcurve - Z(\zeta)- Z(\xi_0), \frac{s_0\otimes \xi_0^{\otimes 2n}}{\zeta}, j\ra \right) \simA n h_\relcurve \circ (a_0-a_1).
\end{multline*}. 

It remains to show that for some $\Phi\in \lF_{m-2}(\relcurve)$ the left-hand side of the above formula is equivalent to $\sigma^{m}_{\relcurve} \circ r_1 - \sigma^{m}_{\relcurve} \circ r_0 $. Choose a normal framing 
\[
\Phi' = (W\subset \A^{m-1}_S,i\colon \relcurve\to \widetilde{W},(\psi_1,\psi_2,\hdots,\psi_{m-2}),\rho) \in \lF_{m-2}(\relcurve)
\]
and set
\[
\alpha=\Jac(\Phi'\frp ( Z(s_1), \relpcurve - Z(\zeta)- Z(\xi_1), \frac{\Theta(2nN)^*(s_1\otimes \xi_1^{\otimes 2n})}{\zeta}, j)).
\]
Since $Z(\xi_1)\cap Z(s_1)=\emptyset$ by the property (3) above we have $\alpha\in R^*$. Set
\[
\Phi = (W\subset \A^{m-1}_S,i\colon \relcurve\to \widetilde{W},(\psi_1,\psi_2,\hdots,\alpha^{-1}\psi_{m-2}),\rho) \in \lF_{m-2}(\relcurve).
\]
Then $\Jac(\Phi\frp ( Z(s_1), \relpcurve - Z(\zeta)- Z(\xi_1), \frac{\Theta(2nN)^*(s_1\otimes \xi_1^{\otimes 2n})}{\zeta}, j))=1$ and Lemma~\ref{lm:suspension} yields
\[
\Phi \frp \la Z(s_1), \relpcurve - Z(\zeta)- Z(\xi_1), \frac{\Theta(2nN)^*(s_1\otimes \xi_1^{\otimes 2n})}{\zeta}, j\ra \simA \sigma^{m-1}_{\relcurve} \circ r_1.
\]
In view of the property (4) above we have
\[
\Jac(\Phi\frp ( Z(s_0), \relpcurve - Z(\zeta)- Z(\xi_0), \frac{s_0\otimes \xi_0^{\otimes 2n}}{\zeta}, j))=1 \mmod \mm_x
\]
for the maximal ideal $\mm_x$ of $R$. Then there exists $\beta\in R^*$ such that 
\[
\beta = 1 \mmod \mm_x,\quad \beta^{2n}=\Jac(\Phi\frp ( Z(s_0), \relpcurve - Z(\zeta)- Z(\xi_0), \frac{s_0\otimes \xi_0^{\otimes 2n}}{\zeta}, j)).
\]
Applying property (7) above choose $\widetilde{\xi}_0\in\Gamma(\relpcurve,\struct(N))$ such that
\[
\widetilde{\xi}_0|_{Z(\zeta)\cup \overline{C}}=\xi_0|_{Z(\zeta)\cup \overline{C}},\quad \widetilde{\xi}_0|_{Z(s_0)}=\beta^{-1}\xi_0|_{Z(s_0)}.
\]
One immediately sees that the properties (3)-(6) hold for $\widetilde{\xi}_0$ since two closed subsets of $\relpcurve$ intersect each other if and only if they intersect each other in the closed fiber $\overline{C}$. Thus all the above reasoning is valid with $\widetilde{\xi}_0$ substituted for $\xi_0$. We have 
\[
\Jac(\Phi\frp ( Z(s_0), \relpcurve - Z(\zeta)- Z(\widetilde{\xi}_0), \frac{s_0\otimes \widetilde{\xi}_0^{\otimes 2n}}{\zeta}, j))=1
\]
and Lemma~\ref{lm:suspension} yields
\[
\Phi \frp \la Z(s_0), \relpcurve - Z(\zeta)- Z(\widetilde{\xi}_0), \frac{s_0\otimes \widetilde{\xi}_0^{\otimes 2n}}{\zeta}, j\ra \simA \sigma^{m-1}_{\relcurve} \circ r_0.
\]

Thus  $\sigma^{m}_{\relcurve} \circ r_1 - \sigma^{m}_{\relcurve} \circ r_0 \simA n h_\relcurve \circ (a_0-a_1)$.
\end{proof}

\begin{corollary} \label{cor:rigidity_lemma}
In the notation of Theorem~\ref{thm:main} let $\cF$ be a homotopy invariant stable linear framed presheaf over $S$. Suppose that either of the following holds.
\begin{enumerate}
\item
$2\in R^*$ and $nh\cF=0$;
\item
$2=0$ in $R$ and $n \cF=0$.
\end{enumerate}
Then $r_1^*=r_0^*\colon \cF(\relcurve) \to \cF(S)$.
\end{corollary}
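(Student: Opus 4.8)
The plan is to apply the additive presheaf $\cF$ to the identity furnished by Theorem~\ref{thm:main}, and then kill each summand on the right in turn, using homotopy invariance, the torsion hypothesis, and stability. Recall that $\cF$ is a contravariant additive functor on $\ZF_*(S)$, so each $\Phi\in\ZF^S_*(X,Y)$ induces a homomorphism $\Phi^*\colon\cF(Y)\to\cF(X)$, composition of framed correspondences is sent to composition of homomorphisms in the opposite order, and $\Z$-linear combinations of correspondences are sent to the corresponding $\Z$-linear combinations of homomorphisms. Applying $\cF$ to the first identity of Theorem~\ref{thm:main} therefore yields, as homomorphisms $\cF(\relcurve)\to\cF(S)$,
\[
(r_1^*-r_0^*)\circ(\sigma_\relcurve^m)^*=(i_1^*-i_0^*)\circ H^*+a^*\circ\bigl(n\,h_\relcurve^*\bigr),
\]
and applying it to the second identity yields the same formula with the last summand replaced by $a^*\circ\bigl(n\,\sigma_\relcurve^*\bigr)$.

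Next I would dispose of the right-hand side termwise. Since $\pi_S\circ i_0=\pi_S\circ i_1=\id_S$, both $i_0^*$ and $i_1^*$ are left inverses of $\pi_S^*\colon\cF(S)\to\cF(\A^1\times S)$, which is an isomorphism by homotopy invariance of $\cF$; hence $i_0^*=i_1^*=(\pi_S^*)^{-1}$ and $(i_1^*-i_0^*)\circ H^*=0$. For the last summand: in case (1) the hypothesis $nh\cF=0$ gives $n\,h_\relcurve^*=0$ on $\cF(\relcurve)$, so $a^*\circ(n\,h_\relcurve^*)=0$; in case (2) the hypothesis $n\cF=0$ says multiplication by $n$ is zero on $\cF(\relcurve)$, so again $a^*\circ(n\,\sigma_\relcurve^*)=0$. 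Thus in both cases $(r_1^*-r_0^*)\circ(\sigma_\relcurve^m)^*=0$. Finally, stability of $\cF$ means $\sigma_\relcurve^*$ is an automorphism of $\cF(\relcurve)$, hence so is $(\sigma_\relcurve^m)^*=(\sigma_\relcurve^*)^m$ by functoriality; cancelling it gives $r_1^*=r_0^*$.

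There is no genuine obstacle at this stage: all the geometric substance is already packaged in Theorem~\ref{thm:main}, and the only points needing care are the variance of $\cF$ under composition of framed correspondences and making sure the torsion hypothesis is applied to the correct factor of the error term --- $h_\relcurve$ in case (1), and the integer coefficient $n$ itself in case (2), which is why the two torsion hypotheses are stated separately rather than merged. One small check worth spelling out is that, since $\cF$ is defined on $\ZF_*(S)$ and the correspondences appearing in Theorem~\ref{thm:main} live in the $\ZF$-groups, the relation there is literally a relation of homomorphisms after applying $\cF$, with no further approximation or limiting argument needed.
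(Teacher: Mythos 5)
Your proposal is correct and follows essentially the same route as the paper's own proof: apply the additive functor $\cF$ to the identity of Theorem~\ref{thm:main}, kill the homotopy term via $i_0^*=i_1^*=(\pi_S^*)^{-1}$, kill the error term via the torsion hypothesis, and cancel $(\sigma_\relcurve^m)^*$ by stability. Your remark that stability only gives that $(\sigma_\relcurve^m)^*$ is an automorphism (to be cancelled, rather than being literally the identity) is a slightly more careful phrasing than the paper's, but the argument is the same.
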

\begin{proof}
(1) Theorem~\ref{thm:main} yields 
\[
r_1^*\circ (\sigma_\relcurve^m)^*= r_0^*\circ (\sigma_\relcurve^m)^* + i_1^*\circ H^* - i_0^*\circ H^* +  a^*\circ nh_\relcurve^*.
\]
Since $\cF$ is stable and homotopy invariant then $(\sigma_\relcurve^m)^*=\id$ and $i_0^*=(\pi_S^*)^{-1}=i_1^*$ with $\pi_{S}\colon \A^1\times S\to S$ being the projection. By the assumption we have $nh_\relcurve^*=0$. The claim follows.

(2) Analogous.
\end{proof}

\begin{theorem} \label{thm:rigidity}
Let $k$ be a field, $X$ be a smooth variety over $k$ and $x\in X$ be a closed point such that $k(x)/k$ is separable. Let $\cF$ be a homotopy invariant stable linear framed presheaf over $k$ and $n\in\NN$ be invertible in $k$. Suppose that either of the following holds.
\begin{enumerate}
\item
$\chark k\neq 2$ and $nh\cF=0$;
\item
$\chark k= 2$ and $n \cF=0$.
\end{enumerate}
Then the restriction to $\{x\}$ gives an isomorphism
\[
i_x^*\colon \cF(\Spec \struct_{X,x}^h) \xrightarrow{\simeq} \cF(\Spec k(x)).
\]
Here $\cF(\Spec \struct_{X,x}^h)=\varinjlim \cF(U_\alpha)$ with the limit taken along all the \etale neighborhoods of $x$ in $X$.
\end{theorem}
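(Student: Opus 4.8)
The plan is to deduce the theorem from the rigidity lemma for finite-type relative curves (Corollary~\ref{cor:rigidity_lemma}) by the standard geometric argument, after two reductions.

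\emph{Reductions.} First I would reduce to the case where $x$ is rational. As $k(x)/k$ is finite separable, $\Spec k(x)$ is essentially smooth over $k$, so by Lemma~\ref{lm:extension} the presheaf $\cF$ extends to a homotopy invariant stable linear framed presheaf $\widetilde{\cF}$ over $F:=k(x)$ satisfying $nh\widetilde{\cF}=0$ (resp. $n\widetilde{\cF}=0$); since $n$ is invertible in $F$ and $\chark F=\chark k$, all the hypotheses persist. Passing to $X_F:=X\times_k F$ and the $F$-rational diagonal point $x_F$ (it exists because $F/k$ is separable) does not change the Henselian local ring, so the colimits computing $\cF(\Spec\struct_{X,x}^h)$ and $\widetilde{\cF}(\Spec\struct_{X_F,x_F}^h)$ coincide and $i_x^*$ is identified with $i_{x_F}^*$. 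Second, choosing \'etale local coordinates I may assume $X$ is a connected \'etale neighborhood of $0\in\A^d_k$ carrying $x$ to $0$; then $\Spec\struct_{X,x}^h=S_d$ where $S_m:=\Spec\struct_{\A^m_k,0}^h$ (so $S_0=\Spec k$), the infinitesimal value $\cF(\Spec\struct_{X,x}^h)$ is $\varinjlim\cF(U)$ over \'etale neighborhoods $U$ of $0$ in $\A^d_k$, and $i_x^*$ becomes $i_0^*\colon\cF(S_d)\to\cF(S_0)=\cF(\Spec k)$.

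\emph{Surjectivity, and reduction of injectivity.} Surjectivity of $i_0^*$ is immediate: $\struct_{X,x}^h$ is Henselian with residue field $k(x)$ finite separable over $k$, so Hensel's lemma produces a $k$-algebra section $k(x)\to\struct_{X,x}^h$ of the residue map, i.e. a morphism $\pi\colon S_d\to\Spec k(x)$ with $\pi\circ i_x=\id$, whence $i_x^*\circ\pi^*=\id$. For injectivity, let $\rho_m\colon S_m\to S_{m-1}$ be the projection and $r_m\colon S_{m-1}\to S_m$ the zero section, so that $\rho_m\circ r_m=\id$ and $i_0^*$ factors as $r_1^*\circ\cdots\circ r_d^*$. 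It suffices to show that each $r_m^*\colon\cF(S_m)\to\cF(S_{m-1})$ is an isomorphism, and since $r_m^*\circ\rho_m^*=\id$ already, it is enough to prove $q_m^*=\id$ on $\cF(S_m)$, where $q_m:=r_m\circ\rho_m\colon S_m\to S_m$.

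\emph{The rigidity-lemma input.} Fix $m\geq1$, set $S:=S_{m-1}$, and let $a\in\cF(S_m)$ be represented by $\bar a\in\cF(U)$ for a connected \'etale neighborhood $p\colon U\to\A^m_k$ of $0$ with structure morphism $\iota\colon S_m\to U$, so $a=\iota^*\bar a$. Applying Lemma~\ref{lm:construct_compactification} to $p$ yields a smooth morphism $\rho\colon\cU\to S$ of relative dimension $1$ admitting a fine compactification, together with $f\colon\cU\to U$ and $g\colon S_m\to\cU$ such that $\iota=f\circ g$ and $\rho\circ g=\rho_m$; hence $\rho\circ(g\circ q_m)=\rho_m$ too, so $g$ and $g\circ q_m$ are two $S$-morphisms $S_m\to\cU$ agreeing on the closed point of $S_m$ (both carry it to $g$ of the closed point, as $r_m\rho_m$ fixes it). Base changing $\cU$ along $\rho_m\colon S_m\to S$, one obtains a smooth relative curve $\cU_{S_m}\to S_m$ admitting a fine compactification (fine compactifications are stable under base change along a morphism of spectra of local rings), over which $g$ and $g\circ q_m$ become sections $\widetilde{u}_0,\widetilde{u}_1\colon S_m\to\cU_{S_m}$ agreeing on the closed point. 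Now $S_m=\Spec\struct_{\A^m_k,0}^h$ is Henselian local with $2$ invertible precisely when $\chark k\neq2$ and $2=0$ precisely when $\chark k=2$, and by Lemma~\ref{lm:extension} $\widetilde{\cF}$ over $S_m$ is homotopy invariant, stable, linear framed and satisfies $nh\widetilde{\cF}=0$ (resp. $n\widetilde{\cF}=0$), so Corollary~\ref{cor:rigidity_lemma} applies and gives $\widetilde{u}_0^*=\widetilde{u}_1^*\colon\widetilde{\cF}(\cU_{S_m})\to\widetilde{\cF}(S_m)=\cF(S_m)$. Evaluating both sides on the pullback of $\bar a$ along the canonical morphism $\cU_{S_m}\to\cU\xrightarrow{f}U$ and using $f g=\iota$ and $f(g q_m)=\iota q_m$ yields $a=q_m^*a$. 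As $a$ was arbitrary this proves $q_m^*=\id$, which closes the induction and hence the theorem.

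\emph{Expected main obstacle.} Since the geometric heart of the matter --- the construction of the finite-type relative curve with a fine compactification in Lemma~\ref{lm:construct_compactification}, and the rigidity lemma Corollary~\ref{cor:rigidity_lemma} --- is already in hand, this proof is largely an assembly, and the only genuinely delicate parts are the two pieces of bookkeeping: pushing the reduction to a rational point through the essentially-smooth-scheme formalism so that the defining colimits together with homotopy invariance, stability and the torsion condition are all preserved; and checking that after base change from $\cU$ to $\cU_{S_m}$ the morphisms $g$ and $g\circ q_m$ genuinely define sections of $\cU_{S_m}\to S_m$ that agree on the closed point, so that Corollary~\ref{cor:rigidity_lemma} applies verbatim. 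I expect this last matching --- fitting the output of Lemma~\ref{lm:construct_compactification} to the input of Corollary~\ref{cor:rigidity_lemma} after base change --- to be the step requiring the most care.
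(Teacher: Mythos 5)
Your proposal is correct and follows essentially the same route as the paper: reduce to a rational point of $\A^d_k$, induct on $d$ using Lemma~\ref{lm:construct_compactification} to produce a relative curve with a fine compactification, base change it along the retraction to get a curve over $S_m$, and apply Corollary~\ref{cor:rigidity_lemma} to two sections agreeing on the closed point (the paper phrases the induction as surjectivity of $\pi_d^*\colon\cF(\Spec k)\to\cF(S_d)$ rather than as injectivity of each $r_m^*$, but these are equivalent). One small correction: Lemma~\ref{lm:construct_compactification} only provides $\rho\circ g\circ r_m=\id_{S_{m-1}}$, not $\rho\circ g=\rho_m$ (its proof changes coordinates on $\A^m_k$, so $\rho\circ g$ is in general a different retraction of $r_m$); you should therefore base change $\cU$ along $\rho\circ g$ rather than along $\rho_m$ --- exactly as the paper does --- and conclude $a=(\rho\circ g)^*(r_m^*a)$, which still gives injectivity of $r_m^*$ and hence the theorem.
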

\begin{proof}
It is well-known that Corollary~\ref{cor:rigidity_lemma} yields the statement of the theorem via a geometric argument, see \cite[Proof of Theorem~2]{Gab92} or \cite[Proof of Theorem~4.4]{SV96}. We provide the argument below for the sake of completeness of the exposition. 

There is a canonical isomorphism $\cF(\Spec \struct_{X,x}^h)\cong \cF(\Spec \struct_{\A^d_{k(x)},0}^h)$ where $d=\dim X$. Every presheaf on $\ZF_*(k)$ is a presheaf on $\ZF_*(k(x))$ by means of the obvious functor $\ZF_*(k(x)) \to \ZF_*(k)$. Thus we may assume that $x$ is a rational point and that $(X,x)=(\A^d_k,0)$. 

Set $S_d=\Spec \struct_{\A^d_k,0}^h$. We have $\pi_d^*\circ i_d^*= \id$ for the projection $\pi_d\colon S_d\to \Spec k$ and the inclusion to the origin $i_d\colon \Spec k \to S_d$. Thus $\pi_d^*\colon \cF(\Spec k)\to \cF(S_d)$ is injective. We argue that $\pi_d^*$ is surjective by induction on $d$. The case of $d=0$ is trivial. Since $\cF(S_d)=\varinjlim \cF(U_\alpha)$ where $U_\alpha\to \A^d_k$ are the \etale neighborhoods of $0$ in $\A^d_k$ it is sufficient to show that for every $\alpha$ the image of $\cF(U_\alpha)$ in $\cF(S_d)$ belongs to $\pi_d^*(\cF(\Spec k))$.

Let $g_\alpha\colon U_\alpha \to \A^d_k$ be an \etale neighborhood of $\{0\}$. Applying Lemma~\ref{lm:construct_compactification} we obtain the following diagram.
\[
\xymatrix @C=3pc{
S_d \ar[r]_(0.6){g} \ar@/^1.0pc/[rrr]^{j_\alpha} & \cU \ar[rr]_{f} \ar[d]^{\rho} & & U_\alpha \\
& S_{d-1} \ar[ul]^{r_d} & &
}
\]
Here $j_\alpha\colon S_d\to U_\alpha$ is the structure morphism, $r_d$ is induced by the embedding $\A^{d-1}_k\times \{0\}\subset \A^{d}_k$, $\rho\colon \cU\to S_{d-1}$ is a smooth morphism of relative dimension $1$ and $\cU$ admits a fine compactification over $S_{d-1}$. Consider the following Cartesian square.
\[
\xymatrix{
\relcurve \ar[d] \ar[r] \podr & \cU \ar[d]^\rho \\
S_d \ar[r]^(0.45){\rho\circ g} & S_{d-1}
}
\]
The morphism $\relcurve\to S_d$ is smooth of relative dimension $1$ and $\relcurve$ admits a fine compactification over $S_d$. Let $r_1,r_2\colon S_d\to \relcurve$ be the morphisms given by $(\id_{S_d},g)$ and $(\id_{S_d}, g\circ r_d\circ \rho \circ g)$ respectively.

Applying Lemma~\ref{lm:extension} we extend $\cF$ to $\ZF_*(S_d)$. Corollary~\ref{cor:rigidity_lemma} yields
\[
r_1^*=r_2^*\colon \cF(\relcurve) \to \cF(S_d),
\]
whence
\[
g^*= g^*\circ \rho^* \circ r_d^* \circ g^*\colon \cF(\cU)\to \cF(S_d).
\]
Composing the above with $f^*$ we obtain
\[
j_\alpha^*=g^*\circ \rho^* \circ r_d^* \circ g^* \circ f^* \colon \cF(U_\alpha)\to \cF(S_d).
\]
Thus $\operatorname{Im}(j_\alpha^*)\subset \operatorname{Im}((g\circ \rho)^*)$. By the induction assumption we have $\cF(S_{d-1})=\operatorname{Im}  (\pi_{d-1}^*)$. Since $(g\circ \rho)^*(\operatorname{Im} (\pi_{d-1}^*))= \operatorname{Im} (\pi_{d}^*)$ then $\operatorname{Im}(j_\alpha^*)\subset  \operatorname{Im}(\pi_d^*)$ and the claim follows.
\end{proof}

\section{Rigidity for representable cohomology theories} \label{sect:motivic}

\begin{definition} \label{def:PT}
Let $\T=\A^1/(\A^1-\{0\})$ be the Morel-Voevodsky object considered as a pointed Nisnevich sheaf of sets and put $\PP^1=(\PP^1,\infty)$ for the pointed projective line also considered as a pointed Nisnevich sheaf of sets.

Let $S$ be a scheme and denote $\Sm_S^{\PP,\T}$ the category with objects being smooth schemes over $S$ and morphisms given by
\[
\Hom_{\Sm_S^{\PP,\T}}(X,Y)=\bigvee_{m\ge 0} \Hom_{{\mathrm{Shv_\Nis}}_\bullet}(X_+\wedge (\PP^1)^{\wedge m} ,Y_+\wedge \T^{\wedge m}).
\]
Here on the right-hand side we consider the wedge sum of the pointed sets of morphisms of pointed Nisnevich sheaves. The sets are pointed by the constant morphisms. For 
\[
f\in \Hom_{{\mathrm{Shv_\Nis}}_\bullet}(X_+\wedge (\PP^1)^{\wedge n},Y_+\wedge \T^{\wedge n}),\quad g\in \Hom_{{\mathrm{Shv_\Nis}}_\bullet}(Y_+\wedge (\PP^1)^{\wedge m} ,V_+\wedge \T^{\wedge m})
\]
the composition 
\[
g\circ f \in \Hom_{{\mathrm{Shv_\Nis}}_\bullet}(X_+\wedge (\PP^1)^{\wedge n+m}, V_+\wedge \T^{\wedge n+m})
\]
is defined as
\begin{multline*}
X_+ \wedge (\PP^1)^{\wedge n}\wedge (\PP^1)^{\wedge m} \xrightarrow{f\wedge \id} 
Y_+ \wedge \T^{\wedge n} \wedge (\PP^1)^{\wedge m} \xrightarrow{\tau} \\ 
\xrightarrow{\tau} Y_+ \wedge (\PP^1)^{\wedge m} \wedge \T^{\wedge n} \xrightarrow{g\wedge \id} 
V_+ \wedge \T^{\wedge m} \wedge \T^{\wedge n} \xrightarrow{\tau} 
V_+ \wedge \T^{\wedge n} \wedge \T^{\wedge m} ,
\end{multline*}
where $\tau$ stands for the corresponding permutation isomorphisms.
\end{definition}

\begin{definition} \label{def:FrPT}
Let $S$ be a scheme and $X,Y$ be schemes over $S$. An explicit framed correspondence $\Phi=(Z,U,\phi,g)\in \Fr_m^S(X,Y)$ gives rise to a morphism of pointed Nisnevich sheaves of sets
\[
\Xi(\Phi)\colon X_+ \wedge (\PP^1)^{\wedge m}  \to Y_+\wedge \T^{\wedge m}
\]
in the following way. Consider the commutative diagram
\[
\xymatrix{
U - Z \ar[r] \ar[d] & U \ar@/^1.5pc/[ddr]^{(g,\phi)} \ar[d]^p  & \\
 X \times (\Proj^1)^{\times m} - Z \ar[r]^j \ar@/_1.5pc/[drr]^{q} &  X\times (\Proj^1)^{\times m} & \\
 & & Y_+\wedge \T^{\wedge m} \cong (Y\times \A^n)/(Y\times (\A^m-0)).
}
\]
Here the square is Cartesian, $p$ is given by the composition $U\to X\times \A^m \to X\times (\Proj^1)^{\times m}$ for the standard immersion $\A^1=\Proj^1-\infty\subset \Proj^1$, $j$ is the open immersion and $q$ is the constant morphism that maps $X \times (\Proj^1)^{\times m} - Z$ to the distinguished point. The square is a Nisnevich cover, thus we have a morphism of Nisnevich sheaves
\[
(X \times (\Proj^1)^{\times m})_+ \to Y_+ \wedge \T^{\wedge m}
\]
that induces the desired morphism
\[
\Xi(\Phi)\colon X_+\wedge (\PP^1)^{\wedge m}  \to Y_+\wedge \T^{\wedge m}.
\]
This construction yields a map
\[
\Xi\colon \Fr_m^S(X,Y)\to \Hom_{{\mathrm{Shv_\Nis}}_\bullet}(X_+\wedge (\PP^1)^{\wedge m} ,Y_+\wedge \T^{\wedge m})
\]
and gives rise to a functor $\Xi$ fitting in the following commutative diagram
\[
\xymatrix{
\Sm_S \ar[d] \ar[dr] &  \\
\Fr_*(S) \ar[r]^(0.4)\Xi & \Sm_S^{\PP,\T}
}
\]
where the unlabeled functors are given by the obvious embeddings to the degree $0$.
\end{definition}

\begin{remark}
One can show \cite[Lemma~5.2]{GP14} that in the case of $S=\Spec k$ for a field $k$ the functor $\Xi$ is an equivalence of categories.
\end{remark}

\begin{definition} \label{def:hsh}
Let $S$ be a scheme. A \textit{$\PP^1$-spectrum} is a sequence $(E_0,E_1,\hdots)$ of pointed presheaves of simplicial sets on $\Sm_S$ together with the bonding morphisms $E_m\wedge \PP^1\to E_{m+1}$. A morphism of $\PP^1$-spectra is a sequence of morphisms of pointed presheaves respecting the bonding maps. Inverting the stable motivic equivalences as in \cite{Jar00} one obtains the \textit{motivic stable homotopy category} $\SH(S)$. See \cite{V98,MV99,Mor04} as an introduction to the motivic homotopy theory and as a reference for the basic properties that we use below.

Every pointed presheaf of simplicial sets $M$ gives rise to the \textit{suspension spectrum}
\[
\Sigma^\infty_{\PP^1} M = (M,M\wedge \PP^1, M\wedge (\PP^1)^{\wedge 2},\hdots )
\]
with the bonding maps being the identities. In particular, every smooth scheme $X$ over $S$ can be regarded as a representable presheaf of sets thus giving rise to the suspension spectrum $\Sigma^\infty_{\PP^1} X_+$. Denote $\SSp=\Sigma_{\PP^1}^{\infty} S_+$ the \textit{sphere spectrum}.

The category $\SH(S)$ is triangulated and monoidal, in particular, the sets of morphisms are modules over the ring $\pi_{0,0}^{\A^1}(\SSp)=\Hom_{\SH(S)}(\SSp,\SSp)$. The suspension functor $\Sigma_{\PP^1}\colon \SH(S)\to \SH(S)$, $(E_0,E_1,\hdots)\mapsto (\PP^1\wedge E_0,\PP^1\wedge E_1,\hdots)$, is invertible. We fix the following notation:
\[
\varepsilon = \Sigma^{-1}_{\PP^1}\Sigma^{\infty}_{\PP^1} \tau \in \pi_{0,0}^{\A^1}(\SSp),\quad h^{\SH}=\id_{\SSp}+\varepsilon,
\]
where $\tau\colon \PP^1_S\to \PP^1_S$ is given by $[x:y]\mapsto [-x:y]$. 

For $E\in \SH(S)$ and $p,q\in\Z$ let $E^{p,q}(-)$ be the presheaf of abelian groups on $\Sm_S$ given by
\[
X\mapsto \Hom_{\SH(S)}(\Sigma^\infty_{\PP^1} X_+, \Sigma_{\PP^1}^{q}E[p-2q]).
\]
\end{definition}

\begin{remark}
In some papers on motivic homotopy theory one also denotes 
\[
\pi^{\A^1}_{-p,-q}(E)(-)=E^{p,q}(-)
\]
and refers to it as the $(-p,-q)$-th \textit{$\A^1$-homotopy presheaf} of $E$.
\end{remark}

\begin{definition} \label{def:sigma_tilde}
Let $\gamma\colon \PP^1 \to \T$ be the morphism of pointed Nisnevich sheaves induced by the contraction $\PP^1\to\PP^1/(\PP^1-\{0\})$ composed with the (excision) isomorphism of Nisnevich sheaves $\T\cong \PP^1/(\PP^1-\{0\})$. The morphism 
\[
\Sigma^{\infty}_{\PP^1}\gamma \colon \Sigma^{\infty}_{\PP^1} \PP^1 \to \Sigma^{\infty}_{\PP^1} \T
\]
is invertible. Abusing the notation we write $\Sigma^{\infty}_{\PP^1}\gamma^{-1} = (\Sigma^{\infty}_{\PP^1}\gamma)^{-1}$.

Let $S$ be a scheme, $X,Y$ be smooth schemes over $S$ and consider
\[
f\in \Hom_{{\mathrm{Shv_\Nis}}_\bullet}(X_+\wedge (\PP^1)^{\wedge m} ,Y_+\wedge \T^{\wedge m}).
\]
Set 
\[
\widetilde{\Sigma}^\infty_{\PP^1} f = \Sigma_{\PP^1}^{-m} \left(\left(\id_{ \Sigma^\infty_{\PP^1}Y_+} \wedge (\Sigma^{\infty}_{\PP^1}\gamma^{-1})^{\wedge m}\right)  \circ \Sigma^{\infty}_{\PP^1}f \right)\in \Hom_{\SH(S)}(\Sigma^\infty_{\PP^1}X_+, \Sigma^\infty_{\PP^1}Y_+).
\]
\end{definition}

\begin{remark}
Roughly speaking, up to the identification of $\PP^1$ with $\T$, the morphism $\widetilde{\Sigma}^\infty_{\PP^1} f$ comes from the morphism of spectra that has the suspensions of $f$ starting from the stage $m$.
\end{remark}

\begin{lemma} \label{lm:fr_transfers}
The construction from Definition~\ref{def:sigma_tilde} gives rise to a functor $\widetilde{\Sigma}^{\infty}_{\PP^1}\colon \Sm_S^{\PP,\T} \to \SH(S)$ fitting in the commutative diagram
\[
\xymatrix{
\Sm_S \ar[d] \ar[rrd]^{\Sigma^\infty_{\PP^1}}& & \\
\Fr_*(S) \ar[r]_{\Xi} & \Sm_S^{\PP,\T} \ar[r]_{\widetilde{\Sigma}^{\infty}_{\PP^1}} & \SH(S).
}
\]
Moreover, for every $Y$ smooth over $S$ one has
\[
(\widetilde{\Sigma}^{\infty}_{\PP^1}\circ \Xi) (\sigma_Y)=\id_{\Sigma^{\infty}_{\PP^1} Y_+},\quad (\widetilde{\Sigma}^{\infty}_{\PP^1}\circ \Xi) (h_Y) = h^{\SH} \id_{\Sigma^{\infty}_{\PP^1} Y_+}.
\]
\end{lemma}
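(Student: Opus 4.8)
The plan is to verify, in order, that $\widetilde{\Sigma}^\infty_{\PP^1}$ (defined on objects by $X\mapsto\Sigma^\infty_{\PP^1}X_+$ and on morphisms by the formula of Definition~\ref{def:sigma_tilde}) is a functor, that it makes the triangle commute, and then to compute its composite with $\Xi$ on $\sigma_Y$ and $h_Y$. For functoriality the only nontrivial point is compatibility with composition: identities, additivity of the $\Hom$-sets of $\SH(S)$, and invertibility of $\Sigma_{\PP^1}$ on $\SH(S)$ handle the rest. Given $f\in\Hom_{{\mathrm{Shv_\Nis}}_\bullet}(X_+\wedge(\PP^1)^{\wedge n},Y_+\wedge\T^{\wedge n})$ and $g\in\Hom_{{\mathrm{Shv_\Nis}}_\bullet}(Y_+\wedge(\PP^1)^{\wedge m},V_+\wedge\T^{\wedge m})$, I would apply $\Sigma^\infty_{\PP^1}$ to the five-term composite defining $g\circ f$ in Definition~\ref{def:PT}, insert copies of the identity $\Sigma^\infty_{\PP^1}\gamma\circ\Sigma^\infty_{\PP^1}\gamma^{-1}$ in $\SH(S)$ to convert the intermediate $\T$-factors into $\PP^1$-factors, and slide the $\gamma^{-1}$'s past the permutation isomorphisms. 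The one input that is not purely formal is that $\Sigma^\infty_{\PP^1}\gamma$, being the stabilization of a morphism of pointed sheaves, intertwines the symmetry isomorphisms (naturality of the symmetric monoidal structure of pointed sheaves applied to $\gamma\colon\PP^1\to\T$), and hence so does $\Sigma^\infty_{\PP^1}\gamma^{-1}$ in $\SH(S)$. With this, the $\T$-permutations appearing in the definition of composition become precisely the $\PP^1$-permutations one must introduce to reassemble $\widetilde{\Sigma}^\infty_{\PP^1}(g)\circ\widetilde{\Sigma}^\infty_{\PP^1}(f)$ as a single stable map; they cancel, and the two sides coincide.

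The triangle is then immediate: on objects both composites send $X$ to $\Sigma^\infty_{\PP^1}X_+$, and on a morphism $u\colon X\to Y$ of $S$-schemes $\Xi$ places $u$ in level $0$ as $u_+$ (Definition~\ref{def:FrPT}), on which the formula of Definition~\ref{def:sigma_tilde} reads $\widetilde{\Sigma}^\infty_{\PP^1}(u_+)=\Sigma^\infty_{\PP^1}(u_+)$.

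For the computations, unwinding Definition~\ref{def:FrPT} for $\sigma_Y=(Y\times\{0\},Y\times\A^1,x,\pi_Y)$ shows that the relevant distinguished square is the one attached to the Zariski cover $\{Y\times\A^1,\,Y\times(\PP^1-\{0\})\}$ of $Y\times\PP^1$, that the map is constant on the second chart and (since $(g,\phi)=\id$ here) the quotient by $Y\times(\A^1-0)$ on the first, so that the glued map is the collapse $Y_+\wedge\PP^1\to Y_+\wedge\bigl(\PP^1/(\PP^1-\{0\})\bigr)$ followed by excision, i.e. $\Xi(\sigma_Y)=\id_{Y_+}\wedge\gamma$. Hence
\[
\widetilde{\Sigma}^\infty_{\PP^1}(\Xi(\sigma_Y))=\Sigma^{-1}_{\PP^1}\Bigl(\bigl(\id\wedge\Sigma^\infty_{\PP^1}\gamma^{-1}\bigr)\circ\Sigma^\infty_{\PP^1}\bigl(\id_{Y_+}\wedge\gamma\bigr)\Bigr)=\id_{\Sigma^\infty_{\PP^1}Y_+}.
\]
Replacing $x$ by $-x$ in $\sigma_Y$ postcomposes $\Xi$ with the automorphism $\id_{Y_+}\wedge\delta$ of $Y_+\wedge\T$ induced by $x\mapsto -x$ on $\A^1$, and on the $\A^1$-chart one checks $\delta\circ\gamma=\gamma\circ\tau$ with $\tau\colon\PP^1\to\PP^1$, $[x:y]\mapsto[-x:y]$. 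Using $\Sigma^\infty_{\PP^1}\gamma^{-1}\circ\Sigma^\infty_{\PP^1}\gamma=\id$, the definition of $\varepsilon$, and the $\pi_{0,0}^{\A^1}(\SSp)$-module structure on $\Hom_{\SH(S)}$, this gives
\[
\widetilde{\Sigma}^\infty_{\PP^1}\bigl(\Xi(\la Y\times\{0\},Y\times\A^1,-x,\pi_Y\ra)\bigr)=\Sigma^{-1}_{\PP^1}\bigl(\id_{\Sigma^\infty_{\PP^1}Y_+}\wedge\Sigma^\infty_{\PP^1}\tau\bigr)=\varepsilon\cdot\id_{\Sigma^\infty_{\PP^1}Y_+}.
\]
Since $h_Y$ is the formal sum of these two framed correspondences and $\widetilde{\Sigma}^\infty_{\PP^1}\circ\Xi$ is extended $\Z$-linearly, adding the two lines yields $(\id+\varepsilon)\cdot\id_{\Sigma^\infty_{\PP^1}Y_+}=h^{\SH}\id_{\Sigma^\infty_{\PP^1}Y_+}$.

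The step I expect to be the main obstacle is the composition check: one must carry the permutation isomorphisms of Definition~\ref{def:PT} through stabilization and the insertions of $\gamma^{-1}$, and make sure no spurious factor of $\varepsilon$ is left over --- recall that the swap of $\PP^1\wedge\PP^1$ is not the identity in $\SH(S)$. The resolution is the bookkeeping observation above: each such swap occurs in a matched pair, one on the $\T$-side coming from the definition of composition and one on the $\PP^1$-side needed to reassemble $\widetilde{\Sigma}^\infty_{\PP^1}(g)\circ\widetilde{\Sigma}^\infty_{\PP^1}(f)$ into a single stable map, and the two correspond under $\gamma$ and cancel.
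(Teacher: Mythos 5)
Your verification is correct and is exactly the argument the paper leaves implicit --- its entire proof of this lemma is the word ``Straightforward.'' In particular you correctly identify the one genuinely non-formal point (that the $\T$-permutations in the composition law of $\Sm_S^{\PP,\T}$ correspond under $\gamma$ to the $\PP^1$-permutations needed to assemble the composite of the desuspended maps, so no stray factor of $\varepsilon$ survives), and the computations $\Xi(\sigma_Y)=\id_{Y_+}\wedge\gamma$ and $\delta\circ\gamma=\gamma\circ\tau$ giving $\varepsilon$ for the $-x$ summand are exactly right.
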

\begin{proof}
Straightforward.
\end{proof}

\begin{lemma}\label{lm:coh_transfers}
For $E\in\SH(S)$ and $p,q\in \Z$ there exists a canonical structure of a homotopy invariant stable linear framed presheaf on $E^{p,q}(-)$, i.e. there exists a canonical homotopy invariant stable linear framed presheaf $E^{p,q}_{fr}$ over $S$ fitting in the following commutative diagram.
\[
\xymatrix @C=5pc{
\Sm_S \ar[d] \ar[r]^{\Sigma^\infty_{\PP^1}}& \SH(S) \ar[d]^{\Hom_{\SH(S)}(-,\Sigma^{q}_{\PP^1}E[p-2q])}\\
\ZF_*(S) \ar[r]^{E^{p,q}_{fr}(-)} &  \mathrm{Ab}
}
\]
Moreover, if for some $n\in \mathbb{N}$ one has $nh^{\SH}E=0$ (or $nE=0$) then $nhE^{p,q}_{fr}=0$ (resp. $nE^{p,q}_{fr}=0$).
\end{lemma}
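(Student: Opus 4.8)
The plan is to construct $E^{p,q}_{fr}$ by composing the functor $\widetilde{\Sigma}^{\infty}_{\PP^1}\circ\Xi\colon\Fr_*(S)\to\SH(S)$ of Lemma~\ref{lm:fr_transfers} with the representable contravariant functor $\Hom_{\SH(S)}(-,\Sigma^{q}_{\PP^1}E[p-2q])$, and then to promote the resulting framed presheaf to a linear one by Lemma~\ref{lm:additivity}. So first I would put
\[
\mathcal{G}(-)=\Hom_{\SH(S)}\bigl(\widetilde{\Sigma}^{\infty}_{\PP^1}\Xi(-),\,\Sigma^{q}_{\PP^1}E[p-2q]\bigr),
\]
a framed presheaf of abelian groups over $S$ whose restriction along $\Sm_S\to\Fr_*(S)$ is, by the commutative triangle of Lemma~\ref{lm:fr_transfers}, exactly $E^{p,q}(-)$.

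Next I would check the three hypotheses needed to apply Lemma~\ref{lm:additivity} to $\mathcal{G}$. Homotopy invariance is immediate, since $\Sigma^{\infty}_{\PP^1}$ inverts the projection $\A^1\times Y\to Y$ in $\SH(S)$, so $\pi_Y^*$ is an isomorphism. Stability is the identity $\widetilde{\Sigma}^{\infty}_{\PP^1}\Xi(\sigma_Y)=\id_{\Sigma^{\infty}_{\PP^1}Y_+}$ of Lemma~\ref{lm:fr_transfers}, which gives $\sigma_Y^*=\id$. The disjoint-union condition holds because $(X\sqcup X)_+\cong X_+\vee X_+$, so $\Sigma^{\infty}_{\PP^1}(X\sqcup X)_+$ is the coproduct of two copies of $\Sigma^{\infty}_{\PP^1}X_+$ in the additive category $\SH(S)$ and a representable functor turns it into the product $\mathcal{G}(X)\oplus\mathcal{G}(X)$. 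Then Lemma~\ref{lm:additivity} supplies the homotopy invariant stable linear framed presheaf $E^{p,q}_{fr}$ together with the stated commutative square: commutativity on $\Sm_S$ is just $\mathcal{G}|_{\Sm_S}=E^{p,q}(-)$, and the factorization through $\ZF_*(S)$ is the content of Lemma~\ref{lm:additivity}.

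For the torsion assertion I would use that the linear extension of Lemma~\ref{lm:additivity} acts on $h_Y\in\ZF_1(Y,Y)$ as the sum of the actions of its two defining framed correspondences, so that by additivity of $\widetilde{\Sigma}^{\infty}_{\PP^1}\circ\Xi$ and the formula $\widetilde{\Sigma}^{\infty}_{\PP^1}\Xi(h_Y)=h^{\SH}\,\id_{\Sigma^{\infty}_{\PP^1}Y_+}$ of Lemma~\ref{lm:fr_transfers} one gets, for $\psi\in E^{p,q}_{fr}(Y)=\Hom_{\SH(S)}(\Sigma^{\infty}_{\PP^1}Y_+,\Sigma^{q}_{\PP^1}E[p-2q])$,
\[
h_Y^*(\psi)=\psi\circ\bigl(h^{\SH}\,\id_{\Sigma^{\infty}_{\PP^1}Y_+}\bigr).
\]
By compatibility of the $\pi_{0,0}^{\A^1}(\SSp)$-module structures on the two Hom-groups this is the postcomposition of $\psi$ with $h^{\SH}\,\id_{\Sigma^{q}_{\PP^1}E[p-2q]}$; and since $\Sigma^{q}_{\PP^1}(-)[p-2q]$ is an additive autoequivalence commuting with the $\pi_{0,0}^{\A^1}(\SSp)$-action, the hypothesis $nh^{\SH}E=0$ (i.e. $nh^{\SH}\,\id_E=0$) gives $nh^{\SH}\,\id_{\Sigma^{q}_{\PP^1}E[p-2q]}=0$, hence $n\,h_Y^*=0$ for all $Y$, i.e. $nhE^{p,q}_{fr}=0$. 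The case $nE=0$ is the identical computation with $\id_E$ in place of $h^{\SH}\,\id_E$.

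I do not expect a real obstacle here: granting Lemmas~\ref{lm:fr_transfers} and~\ref{lm:additivity}, the statement is formal, and the only step worth spelling out is the identification above of the $h_Y$- and $n$-actions on $E^{p,q}_{fr}$ with postcomposition on the coefficient spectrum $\Sigma^{q}_{\PP^1}E[p-2q]$, since that is the one place where the torsion hypothesis on $E$ is actually used.
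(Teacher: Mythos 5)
Your proposal is correct and follows exactly the route of the paper's own (one-line) proof: check that $E^{p,q}$ is homotopy invariant, stable, and additive on disjoint unions, then combine Lemma~\ref{lm:additivity} with Lemma~\ref{lm:fr_transfers}; your treatment of the torsion assertion via $\widetilde{\Sigma}^{\infty}_{\PP^1}\Xi(h_Y)=h^{\SH}\,\id_{\Sigma^{\infty}_{\PP^1}Y_+}$ and centrality of the $\pi_{0,0}^{\A^1}(\SSp)$-action is precisely the detail the paper leaves implicit.
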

\begin{proof}
$E^{p,q}$ is homotopy invariant and $E^{p,q}(X\sqcup X)=E^{p,q}(X)\oplus E^{p,q}(X)$, thus the claim follows from Lemmas~\ref{lm:additivity} and~\ref{lm:fr_transfers}.
\end{proof}

\begin{theorem} \label{thm:rigidity_motivic}
Let $k$ be a field, $X$ be a smooth variety over $k$ and $x\in X$ be a closed point such that $k(x)/k$ is separable. Let $E\in \SH(k)$ and $n\in\NN$ be invertible in $k$. Suppose that either of the following holds.
\begin{enumerate}
\item
$\chark k\neq 2$ and $nh^{\SH}E=0$;
\item
$\chark k= 2$ and $n E=0$.
\end{enumerate}
Then for $p,q\in \Z$ the restriction to $\{x\}$ gives an isomorphism
\[
i_x^*\colon E^{p,q}(\Spec \struct_{X,x}^h) \xrightarrow{\simeq} E^{p,q}(\Spec k(x)).
\]
Here $E^{p,q}(\Spec \struct_{X,x}^h)=\varinjlim E^{p,q}(U_\alpha)$ with the limit taken along the \etale neighborhoods of $x$ in $X$.
\end{theorem}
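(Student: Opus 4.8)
The plan is to deduce this theorem directly from Theorem~\ref{thm:rigidity}, applied to the linear framed presheaf attached to the representable cohomology theory $E^{p,q}(-)$. Set $S=\Spec k$. First I would invoke Lemma~\ref{lm:coh_transfers}: for each fixed $p,q\in\Z$ the presheaf $E^{p,q}(-)=\Hom_{\SH(k)}(\Sigma^\infty_{\PP^1}(-)_+,\Sigma^q_{\PP^1}E[p-2q])$ underlies a homotopy invariant stable linear framed presheaf $E^{p,q}_{fr}$ over $k$. The same lemma also records the two torsion implications that match our hypotheses: if $nh^{\SH}E=0$ then $nhE^{p,q}_{fr}=0$, and if $nE=0$ then $nE^{p,q}_{fr}=0$.

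Next I would feed $\cF=E^{p,q}_{fr}$ into Theorem~\ref{thm:rigidity}. In case (1) we have $\chark k\neq 2$ and, by the previous step, $nh\cF=0$; in case (2) we have $\chark k=2$ and $n\cF=0$. Since $n$ is invertible in $k$, Theorem~\ref{thm:rigidity} applies and yields that the restriction
\[
i_x^*\colon \cF(\Spec\struct_{X,x}^h)\xrightarrow{\simeq}\cF(\Spec k(x))
\]
is an isomorphism.

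Finally I would identify these groups with the ones in the statement. By Lemma~\ref{lm:coh_transfers} the restriction of $E^{p,q}_{fr}$ to $\Sm_k$ is the presheaf $E^{p,q}(-)$, so by Definition~\ref{def:extend} its value on the essentially smooth scheme $\Spec\struct_{X,x}^h=\varprojlim U_\alpha$ is the colimit $\varinjlim_\alpha E^{p,q}(U_\alpha)$, which is precisely $E^{p,q}(\Spec\struct_{X,x}^h)$ as defined in the theorem; likewise $E^{p,q}_{fr}(\Spec k(x))=E^{p,q}(\Spec k(x))$. Under these identifications the map $i_x^*$ above is the one in the theorem, and the claim follows.

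I do not expect a genuine obstacle at this stage: all the substance has been packed into Lemma~\ref{lm:coh_transfers} (the construction of framed transfers on a representable cohomology theory, via Lemmas~\ref{lm:additivity} and~\ref{lm:fr_transfers}) and into Theorem~\ref{thm:rigidity}. The only points requiring a little care are the bookkeeping of the last paragraph — that the extension of $E^{p,q}_{fr}$ to essentially smooth $k$-schemes from Lemma~\ref{lm:extension} agrees with the colimit appearing in the formulation — together with the observation, already used inside the proof of Theorem~\ref{thm:rigidity}, that a presheaf on $\ZF_*(k)$ is in particular a presheaf on $\ZF_*(k(x))$, so that the hypothesis that $k(x)/k$ is separable is handled there rather than here.
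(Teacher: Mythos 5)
Your proposal is correct and matches the paper's proof exactly: the paper derives Theorem~\ref{thm:rigidity_motivic} as an immediate consequence of Lemma~\ref{lm:coh_transfers} and Theorem~\ref{thm:rigidity}, and your extra bookkeeping (the identification of the colimit and the reduction handling $k(x)/k$) is precisely what the paper leaves implicit.
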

\begin{proof}
Follows from Lemma~\ref{lm:coh_transfers} and Theorem~\ref{thm:rigidity}.
\end{proof}

If the base field is perfect then Morel's computation $\Hom_{\SH(k)}(\SSp,\SSp)\cong \GW(k)$ (\cite[Theorem~6.4.1]{Mor04} and \cite[Corollary~6.43]{Mor12}) gives the following reformulation of Theorem~\ref{thm:rigidity_motivic}. The statement was brought to our attention by Tom Bachmann.
\begin{corollary} \label{cor:perfect_rigidity}
	Let $k$ be a perfect field, $X$ be a smooth variety over $k$ and $x\in X$ be a closed point. Let $E\in \SH(k)$ and suppose that $\phi E=0$ for some $\phi\in\GW(k)\cong \Hom_{\SH(k)}(\SSp,\SSp)$ such that $\rank \phi$ is invertible in $k$. Then for $p,q\in \Z$ the restriction to $\{x\}$ gives an isomorphism
	\[
	i_x^*\colon E^{p,q}(\Spec \struct_{X,x}^h) \xrightarrow{\simeq} E^{p,q}(\Spec k(x)).
	\]
	Here $E^{p,q}(\Spec \struct_{X,x}^h)=\varinjlim E^{p,q}(U_\alpha)$ with the limit taken along the \etale neighborhoods of $x$ in $X$.
\end{corollary}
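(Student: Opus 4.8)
The plan is to deduce the statement directly from Theorem~\ref{thm:rigidity_motivic} by rewriting the hypothesis $\phi E=0$ in the form demanded there. The only inputs needed are Morel's ring isomorphism $\GW(k)\cong\Hom_{\SH(k)}(\SSp,\SSp)$, under which $\id_\SSp$ corresponds to $\la 1\ra$ and, by Definition~\ref{def:hsh}, $h^{\SH}$ corresponds to $h=\la 1\ra+\la -1\ra$, together with the evident fact that if an element of $\Hom_{\SH(k)}(\SSp,\SSp)$ annihilates $E$ then so does any of its multiples, in particular $\psi E=0$ forces $(\chi\psi)E=0$ for every $\chi\in\GW(k)$. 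Put $n=\rank\phi$; by hypothesis $n$ is invertible in $k$, so $n\neq 0$, and after replacing $\phi$ by $-\phi$ if necessary (which affects neither hypothesis nor conclusion) we may take $n\in\NN$.

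First I would treat the case $\chark k\neq 2$. The one algebraic fact to record is the identity $h\cdot\psi=(\rank\psi)\,h$ in $\GW(k)$, valid for all $\psi$: as $\GW(k)$ is generated as an abelian group by the classes $\la a\ra$, $a\in k^*$, it suffices to check $h\la a\ra=h$, and indeed $h\la a\ra=\la a\ra+\la -a\ra=\la a,-a\ra$, which is a hyperbolic plane and hence isomorphic to $\la 1,-1\ra=h$ (here $2\in k^*$ is used). Multiplying $\phi E=0$ by $h^{\SH}$ and transporting this identity through Morel's isomorphism then yields $n\,h^{\SH}E=0$ with $n$ invertible in $k$, so Theorem~\ref{thm:rigidity_motivic}(1) applies and gives the asserted isomorphism $i_x^*$.

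For $\chark k=2$ the reduction is even shorter. Since $k$ is perfect its Frobenius is surjective, so every element of $k^*$ is a square and hence $\la a\ra=\la 1\ra$ in $\GW(k)$; combined with the classification of nonsingular symmetric bilinear forms over a field of characteristic $2$ (each such form is diagonalizable or a sum of hyperbolic planes, and a hyperbolic plane becomes $2\la 1\ra$ in the Grothendieck--Witt group), this gives $\GW(k)=\Z\cdot\la 1\ra\cong\Z$ via the rank. Thus $\phi=n\la 1\ra$ and, under Morel's isomorphism, $\phi E=nE$, so the hypothesis reads precisely $nE=0$ with $n$ invertible in $k$, and Theorem~\ref{thm:rigidity_motivic}(2) applies.

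I do not expect a genuine obstacle: once Theorem~\ref{thm:rigidity_motivic} and Morel's computation are available, the whole corollary is formal manipulation in $\GW(k)$. The only point that deserves care is the characteristic~$2$ case, where one has to observe that it is not really a separate case at all, since $\GW$ of a perfect field of characteristic~$2$ is simply $\Z$, so that $\phi E=0$ is already the hypothesis of Theorem~\ref{thm:rigidity_motivic}(2).
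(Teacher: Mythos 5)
Your proof is correct. In characteristic $\neq 2$ it is the same as the paper's: both use the identity $h\phi=(\rank\phi)\,h$ in $\GW(k)$ to turn $\phi E=0$ into $n h^{\SH}E=0$ and then invoke Theorem~\ref{thm:rigidity_motivic}(1) (you actually supply the verification of $h\la a\ra=h$ that the paper omits). In characteristic $2$ you take a genuinely different route. The paper argues formally inside $\GW(k)$: since $\rank\phi$ is odd, $\phi^2=1$ in the Witt ring $W(k)$, so $\phi^2=1+mh$ in $\GW(k)$ with $1+2m=(\rank\phi)^2$, and multiplying by $1+2m-mh$ (using $h^2=2h$) gives $(1+2m)E=(1+2m-mh)\phi^2E=0$; this manipulation is valid over any field of characteristic $2$. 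You instead use perfectness to identify $\GW(k)\cong\Z$ (every unit is a square, non-alternating nonsingular forms are diagonalizable, and the metabolic plane is stably $2\la 1\ra$), so that $\phi=(\rank\phi)\la 1\ra$ and the hypothesis is literally $nE=0$, ready for Theorem~\ref{thm:rigidity_motivic}(2). Your version is shorter and makes visible that characteristic $2$ is not really a separate case, at the cost of invoking the classification of symmetric bilinear forms over perfect fields of characteristic $2$; the paper's version is purely ring-theoretic in $\GW(k)$ and independent of perfectness (which is, however, needed anyway for Morel's computation). Both arguments are complete.
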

\begin{proof}
	Choose $\phi \in \GW(k)$ such that $\phi E=0$ and $\rank \phi$ is invertible in $k$. Let $h\in \GW(k)$ be the hyperbolic form of rank $2$. One has $\phi h= n h$ for $n=\rank \phi$ and in the case of $\chark k\neq 2$ the claim follows from Theorem~\ref{thm:rigidity_motivic} since $nhE=h\phi E=0$. 
	
	Let $\chark k= 2$. Recall that in the Witt ring $W(k)$ every element can be represented by a diagonal form whence for every element in $W(k)$ its square is either $0$ or $1$. Since the rank of $\phi$ is odd then $\phi^2=1$ in $W(k)$. Then $\phi^2 = 1+ mh$ with $m=\tfrac{(\rank \phi)^2-1}{2}$ whence
	\[
	(1+2m-mh)\phi^2 = 1+2m
	\]
	and the claim follows from Theorem~\ref{thm:rigidity_motivic} since $(1+2m)E=(1+2m-hm)\phi^2 E=0$.
\end{proof}

\end{document}